\documentclass[a4paper,10pt]{article}

\usepackage{amsmath,amssymb,amsthm}
\usepackage{amssymb,latexsym, bbm,comment}

\parindent0em
\setlength{\textwidth}{13.5cm}

\renewcommand{\theequation}                            
       {\mbox{\arabic{section}.\arabic{equation}}}

\newcommand{\origsetminus}{} \let\origsetminus=\setminus           
\renewcommand{\setminus}{\!\origsetminus\!}

{\theoremstyle{plain}
\newtheorem{definition}{Definition}[section]
\newtheorem{lemma}[definition]{Lemma}
\newtheorem{theorem}[definition]{Theorem}
\newtheorem{corollary}[definition]{Corollary}
\newtheorem{proposition}[definition]{Proposition}
 
}
{\theoremstyle{definition}
\newtheorem{example}[definition]{Example}
\newtheorem{remark}[definition]{Remark}

}


\renewcommand{\mathbb}{\mathbbm}                     
\renewcommand{\epsilon}{\varepsilon}                 
\renewcommand{\phi}{\varphi}
\renewcommand{\theta}{\vartheta}
\renewcommand{\le}{\leqslant}
\renewcommand{\ge}{\geqslant}


\newcommand{\origfoo}{} \let\origfoo=\sqrt           
\renewcommand{\sqrt}[1]{\origfoo{#1}\;}

\newcommand{\abs}[1]{\left\lvert #1 \right\rvert}    
\newcommand{\norm}[1]{\left\lVert #1 \right\rVert}   
\DeclareMathOperator{\R}{{\mathbb R}}                
\DeclareMathOperator{\Rp}{{\mathbb R}_+}             
\DeclareMathOperator{\C}{{\mathbb C}}                
\DeclareMathOperator{\N}{{\mathbb N}}                
\DeclareMathOperator{\Id}{ Id}                        

\DeclareMathOperator{\Cov}{Cov}
\newcommand{\F}{{\mathcal F}}
\DeclareMathOperator{\Borel}{{\mathcal B}}
\newcommand{\scapro}[2]{\langle #1,#2\rangle}       
\newcommand{\scaproh}[2]{\left[ #1,#2\right]_{H_{Q_2}}}       
\DeclareMathOperator{\1}{\mathbbm 1}
\newcommand{\dom}{\text{dom}}                


\DeclareMathOperator{\Z}{{\mathcal Z}} \DeclareMathOperator{\Cc}{{\mathcal C}}
\newcommand{\cadlag}{c\`adl\`ag}
\newcommand{\nN}{n \in \mathbb{N}}
\newcommand{\Rn}{\mathbb{R}^{n}}

\newcounter{zahl}


\DeclareMathOperator{\an}{a_{(n)}} \newcommand{\mm}{r} 

\title{Cylindrical L\'{e}vy processes in Banach spaces}

\author{ David Applebaum\footnote{D.Applebaum@sheffield.ac.uk}\\
Probability and Statistics Department\\ University of Sheffield \\
Sheffield\\
United Kingdom
 \and
    Markus Riedle\footnote{markus.riedle@manchester.ac.uk}{}\\
The University of Manchester\\
Oxford Road\\
Manchester M13 9PL\\
United Kingdom }

\begin{document}
\maketitle

\begin{abstract}
Cylindrical probability measures are finitely additive measures on
Banach spaces that have sigma-additive projections to Euclidean
spaces of all dimensions. They are naturally associated to notions
of weak (cylindrical) random variable and hence weak (cylindrical)
stochastic processes. In this paper we focus on cylindrical
L\'{e}vy processes. These have (weak) L\'{e}vy-It\^{o}
decompositions and an associated L\'{e}vy-Khintchine formula. If
the process is weakly square integrable, its covariance operator
can be used to construct a reproducing kernel Hilbert space in
which the process has a decomposition as an infinite series built
from a sequence of uncorrelated bona fide one-dimensional L\'{e}vy
processes. This series is used to define cylindrical stochastic
integrals from which cylindrical Ornstein-Uhlenbeck processes may
be constructed as unique solutions of the associated Cauchy
problem. We demonstrate that such processes are cylindrical Markov
processes and study their (cylindrical) invariant measures.

\vspace{5pt}

\begin{center}
{\it Keywords and phrases}: cylindrical probability measure,
cylindrical L\'{e}vy process, reproducing kernel Hilbert space,
Cauchy problem, cylindrical Ornstein-Uhlenbeck process,
cylindrical invariant measure.
\end{center}

\vspace{5pt}

\begin{center}
MSC 2000: {\it primary} 60B11, {\it secondary} 60G51, 60H05,
28C20.
\end{center}

\end{abstract}

\section{Introduction}

Probability theory in Banach spaces has been extensively studied since the
1960s and there are several monographs dedicated to various themes within the
subject - see e.g. Heyer \cite{heyer}, Linde \cite{Linde}, Vakhania et al
\cite{Vaketal}, Ledoux and Talagrand \cite{LedTal}. In general, the theory is
more complicated than in Euclidean space (or even in an infinite-dimensional
Hilbert space) and much of this additional complexity arises from the
interaction between probabilistic ideas and Banach space geometry. The theory
of type and cotype Banach spaces (see e.g. Schwartz \cite{SchwartzLNM}) is a
well-known example of this phenomenon.

From the outset of work in this area, there was already interest
in cylindrical probability measures (cpms), i.e. finitely additive
set functions whose ``projections'' to Euclidean space are always
bona fide probability measures. These arise naturally in trying to
generalise a mean zero normal distribution to an
infinite-dimensional Banach space. It is clear that the covariance
$Q$ should be a bounded linear operator that is positive and
symmetric but conversely it is not the case that all such
operators give rise to a sigma-additive probability measure.
Indeed in a Hilbert space, it is necessary and sufficient for $Q$
to be trace-class (see e.g. Schwartz \cite{SchwartzLNM}, p.28) and
if we drop this requirement (and one very natural example is when
$Q$ is the identity operator) then we get a cpm.

Cpms give rise to cylindrical stochastic processes and these
appear naturally as the driving noise in stochastic partial
differential equations (SPDEs). An introduction to this theme from
the point of view of cylindrical Wiener processes can be found in
Da Prato and Zabczyk \cite{DaPratoZab}. In recent years there has
been increasing interest in SPDEs driven by L\'{e}vy processes and
Peszat and Zabczyk \cite{PesZab} is a monograph treatment of this
topic. Some specific examples of cylindrical L\'{e}vy processes
appear in this work and Priola and Zabczyk \cite{PriolaZab} makes
an in-depth study of a specific class of SPDEs driven by
cylindrical stable processes. In  Brze\'{z}niak and Zabczyk
\cite{BrzZab} the authors study the path-regularity of an
Ornstein-Uhlenbeck process driven by a cylindrical L\'{e}vy
process obtained by subordinating a cylindrical Wiener process.

The purpose of this paper is to begin a systematic study of
cylindrical L\'{e}vy processes in Banach spaces with particular
emphasis on stochastic integration and applications to SPDEs. It
can be seen as a successor to an earlier paper by one of us (see
Riedle \cite{riedle}) in which some aspects of this programme were
carried out for cylindrical Wiener processes. The organisation of
the paper is as follows. In section 2 we review key concepts of
cylindrical proabability, introduce the cylindrical version of
infinite divisibility and obtain the corresponding
L\'{e}vy-Khintchine formula. In section 3 we introduce cylindrical
L\'{e}vy processes and describe their L\'{e}vy-It\^{o}
decomposition. An impediment to developing the theory along
standard lines is that the noise terms in this formula depend
non-linearly on vectors in the dual space to our Banach space. In
particular this makes the ``large jumps'' term difficult to
handle. To overcome these problems we restrict ourself to the case
where the cylindrical L\'{e}vy process is square-integrable with a
well-behaved covariance operator. This enables us to develop the
theory along similar lines to that used for cylindrical Wiener
processes as in Riedle \cite{riedle} and to find a series
representation for the cylindrical L\'{e}vy process in a
reproducing kernel Hilbert space that is determined by the
covariance operator. This is described in section 4 of this paper
where we also utilise this series expansion to define stochastic
integrals of suitable predictable processes.

Finally, in section 5 we consider SPDEs driven by additive
cylindrical L\'{e}vy noise. In the more familiar context of SPDEs
driven by legitimate L\'{e}vy processes in Hilbert space, it is
well known that the weak solution of this equation is an
Ornstein-Uhlenbeck process and the investigation of these
processes has received a lot of attention in the literature (see
e.g. Chojnowska-Michalik \cite{ChoMich87}, Applebaum \cite{Dave}
and references therein). In our case we require that the initial
condition is a cylindrical random variable and so we are able to
construct cylindrical Ornstein-Uhlenbeck processes as weak
solutions to our SPDE. We study the Markov property (in the
cylindrical sense) of the solution and also find conditions for
there to be a unique invariant cylindrical measure. Finally, we
give a condition under which the Ornstein-Uhlenbeck process is
``radonified'', i.e. it is a stochastic
process in the usual sense.\\

Notation and Terminology: $\Rp:=[0,\infty)$. The Borel
$\sigma$-algebra of a topological space $T$ is denoted by
$\Borel(T)$.
By a L\'{e}vy process in a Banach space we will always mean a
stochastic process starting at zero (almost surely) that has
stationary and independent increments and is stochastically
continuous. We do not require that almost all paths are
necessarily \cadlag~i.e. right continuous with left limits.



\section{Cylindrical measures}

Let $U$ be a Banach space with dual $U^\ast$. The dual pairing is
denoted by $\scapro{u}{a}$ for $u\in U$ and $a\in U^\ast$. For
each $n\in\N$, let $U^{\ast n}$ denote the set of all $n$-tuples
of vectors from $U^\ast$. It is a real vector space under
pointwise addition and scalar multiplication and a Banach space
with respect to the ``Euclidean-type'' norm $\norm{\an}^2:=
\sum_{k=1}^n \norm{a_k}^2$, where $\an=(a_1,\dots, a_n)\in U^{\ast
n}$. Clearly $U^{\ast n}$ is separable if $U^\ast$ is. For each
$\an=(a_1,\dots, a_n)\in U^{\ast n}$ we define a linear map
\begin{align*}
  \pi_{a_1,\dots, a_n}:U\to \R^n,\qquad
   \pi_{a_1,\dots, a_n}(u)=(\scapro{u}{a_1},\dots,\scapro{u}{a_n}).
\end{align*}
We often use the notation $\pi_{\an}:=\pi_{a_1,\dots, a_n}$ and in
particular when $a_{n} = a \in  U^\ast$, we will write $\pi(a) =
a$.
 It is easily verified that for each $\an=(a_1,\dots, a_n)\in U^{\ast n}$ the
map $\pi_{\an}$ is bounded with $\norm{\pi_{\an}}\le \norm{\an}$.

The Borel $\sigma$-algebra in $U$ is denoted by  $\Borel(U)$. Let
$\Gamma$ be a subset of $U^\ast$. Sets of the form
 \begin{align*}
Z(a_1,\dots ,a_n;B)&:= \{u\in U:\, (\scapro{u}{a_1},\dots,
 \scapro{u}{a_n})\in B\}\\
 &= \pi^{-1}_{a_1,\dots, a_n}(B),
\end{align*}
where $a_1,\dots, a_n\in \Gamma$ and $B\in \Borel(\R^n)$ are
called {\em cylindrical sets }. The set of all cylindrical sets is
denoted by $\Z(U,\Gamma)$ and it is an algebra. The generated
$\sigma$-algebra is denoted by $\Cc(U,\Gamma)$ and it is called
the {\em cylindrical $\sigma$-algebra with respect to
$(U,\Gamma)$}. If $\Gamma=U^\ast$ we write $\Z(U):=\Z(U,\Gamma)$
and $\Cc(U):=\Cc(U,\Gamma)$.

From now on we will assume that $U$ is separable and note that in
this case, the Borel $\sigma$-algebra $\Borel(U)$ and the
cylindrical $\sigma$-algebra $\Cc(U)$ coincide.

The following lemma shows that for a finite subset
$\Gamma\subseteq U^\ast$ the algebra $\Z(U,\Gamma)$ is a
$\sigma$-algebra and it gives a generator in terms of a generator
of the Borel $\sigma$-algebra $\Borel(\R^n)$, where we recall that
a generator of a $\sigma$-algebra ${\mathfrak E}$ in a space $X$
is a set $E$ in the power set of $X$ such that the smallest
$\sigma$-algebra containing $E$ is ${\mathfrak E}$.

\begin{lemma}\label{le.generatorcyl}
If $\Gamma=\{a_1,\dots, a_n\}\subseteq U^\ast$ is finite we have
\begin{align*}
   \Cc(U,\Gamma)=\Z(U,\Gamma)=\sigma(\{Z(a_1,\dots, a_n;B):\, B\in
   {\mathcal F}\}),
  \end{align*}
where ${\mathcal F}$ is an arbitrary generator of $\Borel(\R^n)$.
\end{lemma}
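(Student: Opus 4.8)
The plan is to reduce everything to the single ``full'' projection $\pi:=\pi_{a_1,\dots,a_n}:U\to\R^n$ and to recognise $\Z(U,\Gamma)$ as the preimage $\sigma$-algebra $\pi^{-1}(\Borel(\R^n))$. The central observation is that, because $\Gamma$ is finite, every cylindrical set built from vectors of $\Gamma$ can be rewritten using only the fixed tuple $(a_1,\dots,a_n)$; once this is established, the three asserted identities follow from standard facts about preimage $\sigma$-algebras.

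First I would prove the inclusion $\Z(U,\Gamma)\subseteq\pi^{-1}(\Borel(\R^n))$, which is the step I expect to be the main obstacle since it requires handling an arbitrary cylindrical set $Z(b_1,\dots,b_m;B)$ with $b_1,\dots,b_m\in\Gamma$, $m\in\N$ and $B\in\Borel(\R^m)$. Each $b_i$ equals some $a_{j(i)}$, so the coordinate-selection map $T:\R^n\to\R^m$, $T(x_1,\dots,x_n)=(x_{j(1)},\dots,x_{j(m)})$, is linear, hence Borel measurable, and satisfies $\pi_{b_1,\dots,b_m}=T\circ\pi$. Consequently
\[
Z(b_1,\dots,b_m;B)=\pi_{b_1,\dots,b_m}^{-1}(B)=\pi^{-1}\bigl(T^{-1}(B)\bigr),
\]
and since $T^{-1}(B)\in\Borel(\R^n)$ this exhibits the set as $\pi^{-1}(C)$ for some $C\in\Borel(\R^n)$. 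The reverse inclusion $\pi^{-1}(\Borel(\R^n))\subseteq\Z(U,\Gamma)$ is immediate, because every $\pi^{-1}(C)$ is by definition the cylindrical set $Z(a_1,\dots,a_n;C)$ with a tuple drawn from $\Gamma$. Hence $\Z(U,\Gamma)=\pi^{-1}(\Borel(\R^n))$.

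From this the remaining claims are short. Since the preimage under any map of a $\sigma$-algebra is again a $\sigma$-algebra, $\pi^{-1}(\Borel(\R^n))$ is a $\sigma$-algebra; therefore $\Z(U,\Gamma)$ is one, and passing to the generated $\sigma$-algebra gives $\Cc(U,\Gamma)=\sigma(\Z(U,\Gamma))=\Z(U,\Gamma)$. For the last equality I would invoke the standard lemma that $\sigma(\pi^{-1}(\mathcal E))=\pi^{-1}(\sigma(\mathcal E))$ for any collection $\mathcal E$ of subsets of $\R^n$: the inclusion ``$\subseteq$'' holds because $\pi^{-1}(\sigma(\mathcal E))$ is a $\sigma$-algebra containing $\pi^{-1}(\mathcal E)$, while ``$\supseteq$'' follows by the good-sets principle applied to $\{A\subseteq\R^n:\pi^{-1}(A)\in\sigma(\pi^{-1}(\mathcal E))\}$, which is a $\sigma$-algebra containing $\mathcal E$ and hence $\sigma(\mathcal E)$. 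Taking $\mathcal E=\F$ to be a generator of $\Borel(\R^n)$, so that $\sigma(\F)=\Borel(\R^n)$, yields $\sigma(\{Z(a_1,\dots,a_n;B):B\in\F\})=\pi^{-1}(\Borel(\R^n))=\Z(U,\Gamma)$, which closes the chain of equalities.
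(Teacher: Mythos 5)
Your proof is correct and follows essentially the same route as the paper: identify $\Z(U,\Gamma)$ with the preimage $\sigma$-algebra $\pi_{a_1,\dots,a_n}^{-1}(\Borel(\R^n))$ and then apply the standard identity $\pi^{-1}(\sigma(\mathcal{F}))=\sigma(\pi^{-1}(\mathcal{F}))$. Your only deviation is cosmetic and in fact an improvement in rigour: where the paper says ``by extending $B$ suitably to $\tilde{B}$,'' you make this explicit via the Borel coordinate-selection map $T$ with $\pi_{b_1,\dots,b_m}=T\circ\pi$, which also cleanly handles tuples with repeated entries of arbitrary length $m$.
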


\begin{proof} Because for any $a_{i_1},\dots, a_{i_k}\in \Gamma$, $k\in\{1,\dots, n\}$,  and $B\in \Borel(\R^k)$ we have
\begin{align*}
  Z(a_{i_1},\dots, a_{i_k};B)
  =Z(a_1,\dots, a_n; \tilde{B})
\end{align*}
by extending $B$ suitably to $\tilde{B}\in\Borel(\R^n)$ it follows
that
\begin{align*}
  \Z(U,\Gamma)&=\{Z(a_{i_1},\dots, a_{i_k};B):\, a_{i_1},\dots, a_{i_k}\in \Gamma,
  B\in \Borel(\R^k), k\in \{1,\dots, n\}\}\\
  &=\{Z(a_1,\dots, a_n; \tilde{B}):\, \tilde{B}\in \Borel(\R^n)\}\\
  &=\{\pi_{a_1,\dots, a_n}^{-1}(\tilde{B}):\,\tilde{B}\in \Borel(\R^n)\}\\
  &=\pi_{a_1,\dots, a_n}^{-1}(\Borel(\R^n)).
\end{align*}
The last family of sets is known to be a $\sigma$-algebra which
verifies that $ \Cc(U,\Gamma)=\Z(U,\Gamma)$. Moreover, we have for
every generator ${\mathcal F}$ of $\Borel(\R^n)$ that
\begin{align*}
  \pi_{a_1,\dots, a_n}^{-1}(\Borel(\R^n))
  =\pi_{a_1,\dots, a_n}^{-1}(\sigma({\mathcal F}))
  =\sigma(\pi_{a_1,\dots, a_n}^{-1}({\mathcal F})),
\end{align*}
which completes the proof.
\end{proof}

A function $\mu:\Z(U)\to [0,\infty]$ is called a {\em cylindrical measure on
$\Z(U)$}, if for each finite subset $\Gamma\subseteq U^\ast$ the restriction of
$\mu$ to the $\sigma$-algebra $\Cc(U,\Gamma)$ is a measure. A cylindrical
measure is called finite if $\mu(U)<\infty$ and a cylindrical probability
measure if $\mu(U)=1$.

For every function $f:U\to\C$ which is measurable with respect to $\Cc(U,\Gamma)$ for a
finite subset $\Gamma\subseteq U^\ast$ the integral $\int f(u)\,\mu(du)$ is well defined
as a complex valued Lebesgue integral if it exists. In particular, the characteristic
function $\phi_\mu:U^\ast\to\C$ of a finite cylindrical measure $\mu$ is defined by
\begin{align*}
 \phi_{\mu}(a):=\int_U e^{i\scapro{u}{a}}\,\mu(du)\qquad\text{for all }a\in
 U^\ast.
\end{align*}
For each $\an=(a_1,\dots, a_n)\in U^{\ast n}$ we obtain an image
measure $\mu\circ\pi_{\an}^{-1}$ on $\Borel(\R^n)$. Its
characteristic function $ \phi_{\mu\circ \pi_{\an}^{-1}}$ is
determined by that of $\mu$:
\begin{align}\label{eq.charimcyl}
 \phi_{\mu\circ\pi_{\an}^{-1}}(\beta)=
  \phi_{\mu}(\beta_1a_1+\cdots + \beta_n a_n)
\end{align}
for all $\beta=(\beta_1,\dots, \beta_n)\in \R^n$.

If $\mu_{1}$ and $\mu_{2}$ are cylindrical probability measures on $U$ their
convolution is the cylindrical probability measure defined by
$$ (\mu_{1} * \mu_{2})(A) = \int_{U} 1_{A}(x +
y)\mu_{1}(dx)\mu_{2}(dy),$$ for each $A \in \Z(U)$. Indeed if $A =
\pi_{a_{(n)}}^{-1}(B)$ for some $\nN, a_{(n)} \in U^{\ast n}, B
\in {\cal B}(\Rn)$, then it is easily verified that
\begin{equation} \label{conv}
  (\mu_{1} * \mu_{2})(A)
=  (\mu_{1} \circ \pi_{a_{(n)}}^{-1}) * (\mu_{2} \circ
\pi_{a_{(n)}}^{-1})(B).
\end{equation} A standard calculation yields $\phi_{\mu_{1} * \mu_{2}} = \phi_{\mu_{1}}\phi_{\mu_{2}}$. For more
information about convolution of cylindrical probability measures, see
\cite{Ros}. The $n$-times convolution of a cylindrical probability measure
$\mu$ with itself is denoted by $\mu^{\ast n}$.

\begin{definition}\label{de.infdiv}
  A cylindrical probability measure $\mu$ on $\Z(U)$ is called {\em
  infinitely divisible} if for all $n\in\N$ there exists a cylindrical probability measure $\mu^{1/n}$ such
  that $\mu=\left(\mu^{1/n}\right)^{\ast n}$.
\end{definition}


It follow that a cylindrical probability measure $\mu$ with characteristic
function $\phi_\mu$ is infinitely divisible if and only if for all $n\in\N$
there exists a characteristic function $\phi_{\mu^{1/n}}$ of a cylindrical
probability measure $\mu^{1/n}$ such that
\begin{align*}
  \phi_\mu(a)=\left(\phi_{\mu^{1/n}}(a)\right)^n\qquad\text{for all }a\in
  U^\ast.
\end{align*}
The relation \eqref{eq.charimcyl} implies that for every $\an\in U^{\ast n}$
and $\beta=(\beta_1,\dots, \beta_n)\in\R^n$ we have
\begin{align*}
  \phi_{\mu\circ \pi_{\an}^{-1}}(\beta)
  &=\phi_\mu(\beta_1 a_1+\dots +\beta_n a_n)\\
  &= \left(\phi_{\mu^{1/n}}(\beta_1 a_1+\dots +\beta_n a_n)\right)^n\\
  &= \left(\phi_{\mu^{1/n}\circ \pi_{\an}^{-1}}(\beta)\right)^n.
\end{align*}
Thus, every image measure $\mu\circ \pi_{\an}^{-1}$ of an infinitely divisible
cylindrical measure $\mu$ is an infinitely divisible probability measure on
$\Borel(\R^n)$.

\begin{remark}
  A probability measure $\mu$ on $\Borel(U)$ is called infinitely divisible if
  for each $n\in\N$ there exists a measure $\mu^{1/n}$ on $\Borel(U)$
  such that $\mu=(\mu^{1/n})^{\ast n}$ (see e.g. Linde \cite{Linde}, section 5.1).
Consequently, every infinitely divisible probability measure on $\Borel(U)$ is
also an infinitely divisible cylindrical probability measure on $\Z(U)$.
\end{remark}

Because $\mu\circ a^{-1}$ is an infinitely divisible probability measure on $\Borel(\R)$
the L\'{e}vy-Khintchine formula in $\R$ implies that for every $a\in U^\ast$ there exist
some constants $\beta_a\in\R$ and $\sigma_a\in\Rp$ and a L\'{e}vy measure $\nu_a$ on
$\Borel(\R)$ such that
\begin{align}\label{eq.Levy-Khin-one}
  \phi_{\mu}(a)=\phi_{\mu\circ a^{-1}}(1)
   = \exp\left(i\beta_a -\tfrac{1}{2} \sigma_a^2 + \int_{\R\setminus\{0\}}\left(e^{i\gamma }-1-i\gamma \1_{B_1}(\gamma)\right)
   \,\nu_a(d\gamma)\right),
\end{align}
where $B_1:=\{\beta\in\R:\, \abs{\beta}\le 1\}$. A priori all
parameters in the characteristics of the image measure $\mu\circ
a^{-1} $ depend on the functional $a\in U^\ast$. The following
result sharpens this representation.
\begin{theorem}\label{th.cyllevymeasure}
Let $\mu$ be a cylindrical probability measure on $\Z(U)$. If $\mu$ is infinitely
divisible then there exists a cylindrical measure $\nu$ on $\Z(U)$ such that the
representation \eqref{eq.Levy-Khin-one} is satisfied with
\begin{align*}
  \nu_a=\nu\circ a^{-1} \qquad\text{for all }a\in U^\ast.
\end{align*}
\end{theorem}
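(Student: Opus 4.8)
The plan is to build $\nu$ from the Lévy measures of the finite-dimensional image measures and to make the construction consistent by exploiting the uniqueness of the Lévy-Khintchine representation. We have already observed that for every $\an=(a_1,\dots,a_n)\in U^{\ast n}$ the image measure $\mu\circ\pi_{\an}^{-1}$ is an infinitely divisible probability measure on $\Borel(\R^n)$. The $n$-dimensional Lévy-Khintchine theorem therefore associates to it a uniquely determined Lévy measure $\rho_{\an}$ on $\R^n\setminus\{0\}$. My goal is to realise all these $\rho_{\an}$ simultaneously as the projections of a single cylindrical measure $\nu$.

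First I would record the projective structure of the image measures. If $b_{(m)}=(b_1,\dots,b_m)\in U^{\ast m}$ is obtained from $\an$ through $b_j=\sum_{k=1}^n \ell_{jk}a_k$, then the matrix $(\ell_{jk})$ defines a linear map $L\colon\R^n\to\R^m$ with $\pi_{b_{(m)}}=L\circ\pi_{\an}$, whence $\mu\circ\pi_{b_{(m)}}^{-1}=(\mu\circ\pi_{\an}^{-1})\circ L^{-1}$. Next I would transfer this to the Lévy measures. A direct computation with characteristic functions shows that the Lévy measure of a linear image $\lambda\circ L^{-1}$ of an infinitely divisible law $\lambda$ on $\R^n$ is the push-forward $\rho\circ L^{-1}$ of the Lévy measure $\rho$ of $\lambda$, restricted to $\R^m\setminus\{0\}$: the only effect of the truncation term in the Lévy-Khintchine formula is a change of drift, and the mass that $L$ sends to the origin is irrelevant because the integrand vanishes there. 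By the uniqueness of the Lévy measure this yields the consistency relation
\[
 \rho_{b_{(m)}}(A)=\rho_{\an}\bigl(L^{-1}(A)\bigr)\qquad\text{for all } A\in\Borel(\R^m\setminus\{0\}),
\]
and one checks that $L^{-1}(A)$ is again bounded away from $0$, so both sides are finite on such $A$.

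With consistency in hand I would define $\nu$. Fix a finite $\Gamma=\{a_1,\dots,a_n\}$. By Lemma \ref{le.generatorcyl} we have $\Cc(U,\Gamma)=\pi_{\an}^{-1}(\Borel(\R^n))$, and I set $\nu_\Gamma(\pi_{\an}^{-1}(B)):=\rho_{\an}(B\setminus\{0\})$. This is well defined on $\Cc(U,\Gamma)$: since $\mu\circ\pi_{\an}^{-1}$, and hence its Lévy measure $\rho_{\an}$, is carried by the range $V=\pi_{\an}(U)$, the value $\rho_{\an}(B\setminus\{0\})$ depends only on $B\cap V$, that is, only on the set $\pi_{\an}^{-1}(B)$. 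Then $\nu_\Gamma$ is a measure on $\Cc(U,\Gamma)$ (possibly infinite in total, but finite on every cylindrical set $\pi_{\an}^{-1}(B)$ with $B$ bounded away from $0$). The consistency relation applied to coordinate projections shows that $\nu_{\Gamma'}$ restricts to $\nu_\Gamma$ whenever $\Gamma\subseteq\Gamma'$, so the family $(\nu_\Gamma)_\Gamma$ is compatible and defines a single set function $\nu$ on $\Z(U)=\bigcup_\Gamma\Cc(U,\Gamma)$ whose restriction to each $\Cc(U,\Gamma)$ is the measure $\nu_\Gamma$; this is precisely what it means for $\nu$ to be a cylindrical measure. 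Taking $n=1$ and $\Gamma=\{a\}$ gives $\nu\circ a^{-1}=\rho_a$ on $\R\setminus\{0\}$, and $\rho_a$ is by definition the Lévy measure $\nu_a$ appearing in \eqref{eq.Levy-Khin-one}.

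I expect the main obstacle to be the bookkeeping at the origin together with the well-definedness and compatibility of the $\nu_\Gamma$. The substantive point is that the Lévy measure, unlike the drift, is genuinely invariant under the projective system once one restricts to sets bounded away from $0$, which is exactly what the uniqueness in the finite-dimensional Lévy-Khintchine formula delivers; the truncation function and the possible mass sent to the origin by the linear maps must be tracked carefully but never obstruct the construction, since Lévy measures assign no mass to $0$ and the truncation only alters the deterministic drift.
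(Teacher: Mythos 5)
Your reduction to finite-dimensional L\'evy--Khintchine uniqueness is sound up to and including the consistency relation: the push-forward identity $\rho_{b_{(m)}}=\rho_{\an}\circ L^{-1}$ is valid, but only on $\Borel(\R^m\setminus\{0\})$, and that restriction is precisely what breaks your final gluing step. The claim that ``$\nu_{\Gamma'}$ restricts to $\nu_\Gamma$ whenever $\Gamma\subseteq\Gamma'$'' is false for your definition $\nu_\Gamma(\pi_{\an}^{-1}(B)):=\rho_{\an}(B\setminus\{0\})$. Take $\Gamma=\{a_1\}\subseteq\Gamma'=\{a_1,a_2\}$ and $Z=\pi_{a_1}^{-1}(B)$ with $0\in B$; with $L(x,y)=x$ you get $\nu_\Gamma(Z)=\rho_{(a_1,a_2)}\bigl((B\setminus\{0\})\times\R\bigr)$, whereas $\nu_{\Gamma'}(Z)=\rho_{(a_1,a_2)}\bigl((B\times\R)\setminus\{0\}\bigr)$; the two differ by the mass $\rho_{(a_1,a_2)}\bigl(\{0\}\times(\R\setminus\{0\})\bigr)$, which your consistency relation does not control and which can be strictly positive. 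Concretely, let $\mu$ be the (infinitely divisible, even Radon) compound Poisson law with L\'evy measure $\delta_{u_0}$, where $\scapro{u_0}{a_1}=0$ and $\scapro{u_0}{a_2}=1$: then $\rho_{a_1}=0$, so $\nu_\Gamma\bigl(\pi_{a_1}^{-1}((-1,1))\bigr)=0$, while $\rho_{(a_1,a_2)}=\delta_{(0,1)}$ gives $\nu_{\Gamma'}$ the value $1$ on the very same cylindrical set. So your $\nu$ is not a well-defined set function on $\Z(U)$. The obstruction is structural, not bookkeeping: the mass the cylindrical L\'evy measure carries over the slice $\{u:\pi_{\an}(u)=0\}$ is invisible to $\rho_{\an}$, hence no formula in terms of $\rho_{\an}$ alone can define $\nu$ on all of $\Cc(U,\Gamma)$.

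This is exactly why the paper proceeds differently: it defines $\nu$ only on cylinders $Z(a_1,\dots,a_n;B)$ over intervals $B$ \emph{bounded away from the origin} (with $\nu_{a_1,\dots,a_n}$-null boundary), shows the value on such sets is representation-independent because it admits the intrinsic expression $\nu(Z)=\lim_{t_k\to 0}t_k^{-1}\mu^{\ast t_k}(Z)$ (Sato's small-time asymptotics), and only then extends, for each fixed tuple, from this semi-ring to $\Z(U,\{a_1,\dots,a_n\})$ by Carath\'eodory. Your push-forward lemma could in fact substitute for Sato's theorem in that well-definedness step: given $Z(\an;B)=Z(b_{(m)};C)$ with $B,C$ bounded away from zero, pass to the joint tuple $(a_1,\dots,a_n,b_1,\dots,b_m)$, use (as you correctly argue) that its L\'evy measure is carried by the range of the joint projection, and apply your identity to the two coordinate projections. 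So your approach is salvageable, but only after abandoning the global prescription $\rho_{\an}(B\setminus\{0\})$ on sets $B$ containing the origin and letting an extension theorem produce the values there, as the paper does.
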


\begin{proof} Fix $\an=(a_1,\dots ,a_n)\in U^{\ast n}$ and let
$ \nu_{a_1,\dots,a_n}$ denote the L\'{e}vy measure on $\Borel(\R^n)$ of the
infinitely divisible measure $\mu\circ\pi_{a_1,\dots, a_n}^{-1}$. Define the
family of cylindrical sets
\begin{align*} {\mathcal G}:=\{Z(a_1,\dots,a_n;B):\,a_1,\dots, a_n\in U^\ast,
n\in\N, B\in {\mathcal F}_{\an}\},
\end{align*}
where
\begin{align*}
  {\mathcal F}_{\an}:=\{(\alpha,\beta]\subseteq \R^n: \,\nu_{a_1,\dots, a_n}(\partial
  (\alpha,\beta])= 0,\, 0\notin [\alpha,\beta]\}
\end{align*}
and $\partial(\alpha,\beta]$ denotes the boundary of the $n$-dimensional
interval
\begin{align*}
  (\alpha,\beta]:=\{v=(v_1,\dots, v_n)\in \R^n:  \alpha_i <v_i\le \beta_i,\; i=1,\dots,
  n\}
\end{align*}
for $\alpha=(\alpha_1,\dots, \alpha_n)\in\R^n, \beta=(\beta_1,\dots,
\beta_n)\in\R^n$.

Our proof relies on the relation
\begin{align}\label{eq.limmunu}
  \lim_{t_k\to 0}\frac{1}{t_k}\int_{\R^n} \1_B(\gamma)\,(\mu\circ\pi_{a_1,\dots, a_n}^{-1})^{\ast t_k}(d\gamma)
  =\int_{\R^n}\1_B(\gamma)\,\nu_{a_1,\dots, a_n}(d\gamma).
\end{align}
for all sets $B\in {\mathcal F}_{\an}$.  This can be deduced from Corollary
2.8.9. in \cite{sato} which states that
\begin{align}\label{eq.sato}
  \lim_{t_k\to 0}\frac{1}{t_k}\int_{\R^n} f(\gamma)\,(\mu\circ\pi_{a_1,\dots, a_n}^{-1})^{\ast t_k}(d\gamma)
  =\int_{\R^n}f(\gamma)\,\nu_{a_1,\dots, a_n}(d\gamma)
\end{align}
for all bounded and continuous functions $f:\R^n\to\R$ which
vanish on a neighborhood of $0$. The relation (\ref{eq.limmunu})
can be seen in the following way: let $B=(\alpha,\beta]$ be a set
in ${\mathcal F}_{\an}$  for $\alpha,\beta\in \R^n$. Because
$0\notin \bar{B}$ there exists $\epsilon
>0$ such that $0\notin [\alpha-\epsilon,\beta+\epsilon]$ where
$\alpha-\epsilon:=(\alpha_1-\epsilon, \dots, \alpha_n-\epsilon)$ and
$\beta+\epsilon:=(\beta_1+\epsilon, \dots, \beta_n+\epsilon)$. Define for
$i=1,\dots, n$  the functions $g_i:\R\to [0,1]$ by
\begin{align*}
  g_i(c)=\left(1-\tfrac{(\alpha_i-c)}{\epsilon}\right)\1_{(\alpha_i-\epsilon,\alpha_i]}(c)  +
   \1_{(\alpha_i,\beta_i]}(c)+ \left(1-\tfrac{(c-\beta_i)}{\epsilon}\right)\1_{(\beta_i,\beta_i+\epsilon]}(c),
\end{align*}
and interpolate the function $\gamma\mapsto \1_{(\alpha,\beta]}(\gamma)$ for
$\gamma=(\gamma_1,\dots, \gamma_n)$  by
\begin{align*}
  f((\gamma_1,\dots, \gamma_n)):=
   g_1(\gamma_1)\cdot \ldots \cdot g_n (\gamma_n).
\end{align*}
 Because $\1_B \le f \le \1_{(\alpha-\epsilon,\beta+\epsilon]}$  we have
\begin{align*}
 \frac{1}{t_k}\int_{\R^n} \1_B(\gamma)\,(\mu\circ\pi_{a_1,\dots, a_n}^{-1})^{\ast t_k}(d\gamma)
 \le \frac{1}{t_k}\int_{\R^n} f(\gamma)\,(\mu\circ\pi_{a_1,\dots, a_n}^{-1})^{\ast t_k}(d\gamma)
\end{align*}
and
\begin{align*}
 \int_{\R^n} f(\gamma)\,\nu_{a_1,\dots, a_n}(d\gamma)
  \le  \int_{\R^n} \1_{(\alpha-\epsilon,\beta+\epsilon]}(\gamma)\,\nu_{a_1,\dots, a_n}(d\gamma)
  =\nu_{a_1,\dots, a_n}((\alpha-\epsilon,\beta+\epsilon]).
\end{align*}
Since $f$ is bounded, continuous and vanishes on a neighborhood of
$0$, it follows from \eqref{eq.sato} that
\begin{align}\label{eq.limsupapp}
 \limsup_{t_k\to 0}
  \frac{1}{t_k}\int_{\R^n} \1_{B}(\gamma)\,(\mu\circ\pi_{a_1,\dots, a_n}^{-1})^{\ast t_k}(d\gamma)
  \le \nu_{a_1,\dots, a_n}((\alpha-\epsilon,\beta+\epsilon]).
\end{align}
By considering $(\alpha+\epsilon, \beta-\epsilon]\subseteq (\alpha,\beta]$ we
obtain similarly that
\begin{align}\label{eq.liminfapp}
 \nu_{a_1,\dots, a_n}((\alpha+\epsilon,\beta-\epsilon])
\le \liminf_{t_k\to 0}
  \frac{1}{t_k}\int_{\R^n} \1_{B}(\gamma)\,(\mu\circ\pi_{a_1,\dots, a_n}^{-1})^{\ast t_k}(d\gamma).
\end{align}
Because $\nu_{a_1,\dots,a_n}(\partial B)=0$ the inequalities
\eqref{eq.limsupapp} and \eqref{eq.liminfapp} imply \eqref{eq.limmunu}.

Now we define a set function
\begin{align*}
  \nu:\Z(U) \to [0,\infty],
  \qquad \nu(Z(a_1,\dots,a_n;B)):=\nu_{a_1,\dots, a_n}(B).
\end{align*}
First, we show that $\nu$ is well defined. For
$Z(a_1,\dots,a_n;B)\in {\mathcal G}$ equation \eqref{eq.limmunu}
allows us to conclude that
\begin{align*}
\nu(Z(a_1,\dots,a_n;B))
  &= \lim_{t_k\to 0} \frac{1}{t_k}
     \int_{\R^n} \1_{B}(\gamma)\,(\mu\circ \pi_{a_1,\dots, a_n}^{-1})^{\ast t_k} (d\gamma)\\
  &= \lim_{t_k\to 0} \frac{1}{t_k}
     \int_{\R^n} \1_{B}(\gamma)\,(\mu^{\ast t_k}\circ \pi_{a_1,\dots, a_n}^{-1}) (d\gamma)\\
  &= \lim_{t_k\to 0} \frac{1}{t_k}
    \int_{U} \1_{B}(\pi_{a_1,\dots, a_n}(u))\, \mu^{\ast t_k}(du)\\
  &= \lim_{t_k\to 0} \frac{1}{t_k}
 \mu^{\ast t_k}(Z(a_1,\dots, a_n;B)).
\end{align*}
It follows that for two sets in ${\mathcal G}$ with
$Z(a_1,\dots,a_n;B)=Z(b_1,\dots, b_m;C)$ that
\begin{align*}
\nu(Z(a_1,\dots,a_n;B))=\nu(Z(b_1,\dots, b_m;C)),
\end{align*}
which verifies that $\nu$ is well defined on ${\mathcal G}$.

Having shown that $\nu$ is well-defined on ${\mathcal G}$ for fixed
$\an=(a_1,\dots, a_n)\in U^{\ast n}$ we now demonstrate that it's restriction
to the $\sigma$-algebra $\Z(U,\{a_1,\dots, a_n\})$ is a measure so that it
yields a cylindrical measure on $\Z(U)$.

Define a set of $n$-dimensional intervals by
\begin{align*}
  {\mathcal H}:=\{(\alpha,\beta]\subseteq \R^n:\, 0\notin [\alpha,\beta]\}.
\end{align*}
Because $\nu_{a_1,\dots, a_n}$ is a $\sigma$-finite measure the set
\begin{align*}
 {\mathcal H}\setminus {\mathcal F}_{\an}=  \{(\alpha,\beta]\in {\mathcal H}:\, \nu_{a_1,\dots, a_n}(\partial (\alpha,
  \beta])\neq 0\}
\end{align*}
is countable. Thus, the set ${\mathcal F}_{\an}$ generates the same
$\sigma$-algebra as ${\mathcal H}$ because the countably missing sets in
${\mathcal F}_{\an}$ can easily be approximated by sets in ${\mathcal
F}_{\an}$. But ${\mathcal H}$ is known to be a generator of the Borel
$\sigma$-algebra  $\Borel(\R^n)$ and so Lemma \ref{le.generatorcyl} yields that
\begin{align*}
{\mathcal G}_{\an}:=\{Z(a_1,\dots,a_n;B):\, B\in {\mathcal F}_{\an}\}
\end{align*}
generates $\Z(U,\{a_1,\dots, a_n\})$.

Furthermore, ${\mathcal G}_{\an}$ is a semi-ring because ${\mathcal F}_{\an}$
is a semi-ring. Secondly, $\nu$ restricted to ${\mathcal G}_{\an}$ is well
defined and is a pre-measure. For, if $\{Z_k:=Z_k(a_1,\dots,
a_n;B_k):\,k\in\N\}$ are a countable collection of disjoint sets in ${\mathcal
G}_{\an}$ with $\cup Z_k\in {\mathcal G}_{\an}$ then the Borel sets $B_k$ are
disjoint and it follows that
\begin{align*}
 \nu\left(\bigcup_{k\ge 1} Z_k\right)&= \nu\left(\bigcup_{k\ge 1} \pi_{a_1,\dots, a_n}^{-1}(B_k)\right)
=\nu\left(\pi_{a_1,\dots, a_n}^{-1}\left(\bigcup_{k\ge 1} B_k\right)\right)\\
& = \nu_{a_1,\dots, a_n}\left(\bigcup_{k\ge 1} B_k\right) =\sum_{k=1}^\infty
\nu_{a_1,\dots, a_n} (B_k) =\sum_{k=1}^\infty \nu(Z_k).
\end{align*}
Thus, $\nu$ restricted to ${\mathcal G}_{\an}$ is a pre-measure and because it
is $\sigma$-finite it can be extended uniquely  to a measure  on
$\Z(U,\{a_1,\dots,a_n\})$ by Caratheodory's extension theorem, which verifies
that $\nu$ is a cylindrical measure on $\Z(U)$.
\end{proof}

By the construction of the cylindrical measure $\nu$ in Theorem \ref{th.cyllevymeasure}
it folllows that every image measure $\nu\circ \pi_{\an}^{-1}$ is a L\'{e}vy measure on
$\Borel(\R^n)$ for all $\an\in U^{\ast n}$. This motivates the following definition:
\begin{definition}\label{de.cyllevy}
  A cylindrical measure $\nu$ on $\Z(U)$ is called a {\em
  cylindrical L\'{e}vy measure} if for all $a_1,\dots, a_n\in U^
  \ast$ and $n\in\N$ the measure $\nu\circ \pi_{a_1,\dots,
  a_n}^{-1}$ is a L\'{e}vy measure on $\Borel(\R^n)$.
\end{definition}

\begin{remark} Let $\nu$ be a  L\'{e}vy measure $\nu$ on $\Borel(U)$ (see
\cite{Linde} for a definition).  Then, if Definition
\ref{de.cyllevy} is sensible $\nu$ should be also a {\em
cylindrical} L\'{e}vy measure. That this is true, we explain in
the following.

According to Proposition 5.4.5 in \cite{Linde} the L\'{e}vy measure $\nu$
satisfies
\begin{align}\label{eq.Lindeorg}
  \sup_{\norm{a}\le 1}\int_{\norm{u}\le 1} \abs{\scapro{u}{a}}^2\,
  \nu(du)<\infty.
\end{align}
This result can be generalised to
\begin{align}\label{eq.Lindemod1}
  \sup_{\norm{a}\le 1}\int_{\{u:\abs{\scapro{u}{a}}\le 1\}} \abs{\scapro{u}{a}}^2\,
  \nu(du)<\infty.
\end{align}
For, the result \eqref{eq.Lindeorg} relies on Proposition 5.4.1 in
\cite{Linde} which is based on Lemma 5.3.10 therein. In the latter
the set $\{u: \norm{u}\le 1\}$ can be replaced by the larger set
$\{u: \abs{\scapro{u}{a}}\le 1\}$ for $a\in U^\ast$ with
$\norm{a}\le 1$ because in the proof the inequality (line -10,
page 72 in \cite{Linde})
\begin{align*}
  1-\cos t \ge \tfrac{t^2}{3}\qquad \text{for all }\abs{t}\le 1,
\end{align*}
is applied for $t=\norm{u}$ while we apply it for $t=\abs{\scapro{u}{a}}$. Then
we can follow the original proof in \cite{Linde} to obtain
\eqref{eq.Lindemod1}. From \eqref{eq.Lindemod1} it is easy to derive
\begin{align}\label{eq.Lindemod2}
  \sup_{\norm{a}\le M}\int_{\{u:\abs{\scapro{u}{a}}\le N\}} \abs{\scapro{u}{a}}^2\,
  \nu(du)<\infty
\end{align}
for all $M,N\ge 0$.

For arbitrary $\an=(a_1,\dots, a_n)\in U^{\ast n}$ and
$B_n:=\{\beta\in\R^n:\abs{\beta}\le 1\}$ we have that
\begin{align*}
\pi^{-1}_{\an}(B_{n})= \{u: \scapro{u}{a_1}^2+\dots +\scapro{u}{a_n}^2\le 1\}
 &\subseteq \{u: \tfrac{1}{n}(\scapro{u}{a_1}+\dots
 +\scapro{u}{a_n})^2\le 1\}\\
 &= \{u: \abs{\scapro{u}{(a_1+\dots
 +a_n)}}\le \sqrt{n}\}=:D,
\end{align*}
where we used the inequality $(\gamma_1+\dots +\gamma_n)^2\le n (\gamma_1^2+\dots
+\gamma_n^2)$ for $\gamma_1,\dots, \gamma_n\in\R$. It follows from \eqref{eq.Lindemod2}
that
\begin{align*}
 \int_{B_{n}}\abs{\beta}^2\, (\nu\circ \pi_{\an}^{-1})(d\beta)
 =\sum_{k=1}^n \int_{\pi^{-1}_{\an}(B_{n})}\abs{\scapro{u}{a_k}}^2\,\nu(du)
 \le \sum_{k=1}^n \int_{D} \abs{\scapro{u}{a_k}}^2\,\nu(du)
 <\infty.
\end{align*}
As a result we obtain that $\nu$  is a cylindrical L\'{e}vy measure on
$\Borel(\R^n)$.
\end{remark}

In the next section we will sharpen the structure of the
L\'{e}vy-Khintchine formula for infinitely divisible cylindrical
measures. It is appropriate to state the result at this juncture:
\begin{theorem}\label{co.leykhint}
  Let $\mu$ be an infinitely divisible cylindrical probability measure. Then
  there exist a map $\mm:U^\ast\to\R$, a quadratic form
  $s:U^\ast\to \R$ and a cylindrical L\'{e}vy measure $\nu$ on $\Z(U)$
  such that:
\begin{align*}
\phi_{\mu}(a)= \exp\left( i \mm(a) -\tfrac{1}{2}s(a)
 +\int_{\R\setminus\{0\}}\left(e^{i \gamma}-1-i\gamma \1_{B_{1}}(\gamma)
 \right)(\nu\circ a^{-1})(d\gamma) \right)
\end{align*}
for all $a\in U^\ast$.
\end{theorem}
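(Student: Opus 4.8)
The plan is to combine the one--dimensional representation \eqref{eq.Levy-Khin-one} with the cylindrical L\'{e}vy measure produced in Theorem \ref{th.cyllevymeasure}; the only genuinely new content is that the Gaussian coefficient $a\mapsto\sigma_a^2$ is a quadratic form. I would set $\mm(a):=\beta_a$ and $s(a):=\sigma_a^2$, both well defined since the triplet $(\beta_a,\sigma_a,\nu_a)$ in \eqref{eq.Levy-Khin-one} is uniquely determined by the infinitely divisible law $\mu\circ a^{-1}$ on $\Borel(\R)$ through the uniqueness part of the one--dimensional L\'{e}vy--Khintchine formula; no structure beyond being a map is required of $\mm$. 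For the jump part, Theorem \ref{th.cyllevymeasure} supplies a cylindrical measure $\nu$ with $\nu_a=\nu\circ a^{-1}$ for all $a\in U^\ast$, and, as noted after that theorem, this $\nu$ is in fact a cylindrical L\'{e}vy measure in the sense of Definition \ref{de.cyllevy}. Substituting $\mm(a)$, $s(a)$ and $\nu\circ a^{-1}$ into \eqref{eq.Levy-Khin-one} then yields the asserted formula, so everything reduces to verifying that $s$ is a quadratic form.

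For this I would pass to finite dimensions and read the Gaussian coefficient off the covariance matrices of the image laws. Fix $a,b\in U^\ast$ and let $\Sigma=(\Sigma_{ij})$ be the Gaussian covariance matrix in the L\'{e}vy--Khintchine triplet of the infinitely divisible measure $\mu\circ\pi_{a,b}^{-1}$ on $\Borel(\R^2)$. For $\theta=(\theta_1,\theta_2)\in\R^2$ and $c=\theta_1 a+\theta_2 b$, relation \eqref{eq.charimcyl} identifies $\mu\circ c^{-1}$ with the image of $\mu\circ\pi_{a,b}^{-1}$ under the linear map $(\gamma_1,\gamma_2)\mapsto\theta_1\gamma_1+\theta_2\gamma_2$; since a linear map transports the Gaussian covariance of an infinitely divisible law by the usual congruence rule and carries its L\'{e}vy measure to another L\'{e}vy measure without creating any Gaussian part, the uniqueness of the one--dimensional triplet gives $s(c)=\theta^{\!\top}\Sigma\theta$. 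In particular $s(a)=\Sigma_{11}$ and $s(b)=\Sigma_{22}$, so the polarisation $\Phi(a,b):=\tfrac14\bigl(s(a+b)-s(a-b)\bigr)$ equals the symmetric entry $\Sigma_{12}$ and $s$ obeys the parallelogram identity $s(a+b)+s(a-b)=2s(a)+2s(b)$. Bilinearity of $\Phi$ follows by applying the same congruence rule to the coordinate maps realising the vector operations: the map $(\gamma_1,\gamma_2)\mapsto(\lambda\gamma_1,\gamma_2)$ shows that the covariance of $\mu\circ\pi_{\lambda a,b}^{-1}$ is $\mathrm{diag}(\lambda,1)\,\Sigma\,\mathrm{diag}(\lambda,1)$, whence $\Phi(\lambda a,b)=\lambda\Sigma_{12}=\lambda\Phi(a,b)$, and passing to the three--dimensional law $\mu\circ\pi_{a',a'',b}^{-1}$ and the map $(\gamma_1,\gamma_2,\gamma_3)\mapsto(\gamma_1+\gamma_2,\gamma_3)$ gives $\Phi(a'+a'',b)=\Phi(a',b)+\Phi(a'',b)$. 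With symmetry and $\Phi(a,a)=s(a)$ this exhibits $s$ as a quadratic form, nonnegative because $\sigma_a^2\ge0$.

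The step I expect to be delicate is the transformation rule used throughout the second paragraph: one must be certain that pushing an infinitely divisible law forward along a linear map transports the Gaussian coefficient exactly by the congruence $\Sigma\mapsto L\Sigma L^{\!\top}$ and produces no spurious Gaussian mass out of the projected L\'{e}vy measure. This rests on the uniqueness of the multidimensional L\'{e}vy--Khintchine decomposition together with the fact that the linear image of a L\'{e}vy measure, once any mass sent to the origin is discarded, is again a L\'{e}vy measure; granting this, the remaining manipulations are routine bookkeeping.
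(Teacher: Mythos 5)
Your proof is correct, but it takes a genuinely different route from the paper's. The paper realizes $\mu$ as the cylindrical distribution of a cylindrical random variable $X$ (via \cite[VI.3.2]{Vaketal}), applies the one-dimensional L\'evy--It\^o decomposition \eqref{eq.li} to $Xa$, $Xb$ and $X(a+b)$, and exploits the a.s.\ linearity $X(a+b)=Xa+Xb$ together with uniqueness of that decomposition to obtain the almost sure identities \eqref{eq.aux1} and \eqref{eq.aux2} between the Gaussian components; squaring, taking expectations, and showing that $\rho(a,b):=\Cov(\sigma_aW_a,\sigma_bW_b)$ is bilinear then exhibits $s$ as a quadratic form via \eqref{eq.quadform1}. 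You never pass to a random variable: you argue entirely at the level of the image laws $\mu\circ\pi_{a,b}^{-1}$ and $\mu\circ\pi_{a',a'',b}^{-1}$ (whose infinite divisibility the paper establishes right after Definition \ref{de.infdiv} using \eqref{eq.charimcyl}), invoking uniqueness of the multidimensional L\'evy--Khintchine triplet and the congruence rule $\Sigma\mapsto L\Sigma L^{\top}$ for the Gaussian covariance under linear images, then concluding by polarisation. The ``delicate step'' you flag is indeed standard (see e.g.\ Proposition 11.10 in \cite{sato}): the linear image of a L\'evy measure, with any mass at the origin discarded, is again a L\'evy measure because $\min(\abs{Lx}^2,1)\le\max(\norm{L}^2,1)\min(\abs{x}^2,1)$, and uniqueness of the triplet rules out spurious Gaussian mass. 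Your distributional argument buys something real: it needs only uniqueness of triplets of \emph{laws}, whereas the paper's argument rests on almost-sure identities between the Gaussian summands of three random variables on a common probability space, which is a subtler form of uniqueness; it also handles additivity and homogeneity in one uniform stroke via finite-dimensional projections. The paper's approach, in return, is shorter once \eqref{eq.li} is granted and yields the probabilistic interpretation $Q(a,b)=2\Cov(\sigma_aW_a,\sigma_bW_b)$, which prefigures the covariance-operator machinery of section 4. Both proofs treat the drift and jump parts identically, setting $\mm(a)=\beta_a$ and invoking Theorem \ref{th.cyllevymeasure} (with the observation following it that $\nu$ is a cylindrical L\'evy measure), so your reduction of the whole theorem to the quadratic-form property of $s$ matches the paper's structure exactly.
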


\section{Cylindrical stochastic processes}

Let $(\Omega, \F,P)$ be a probability space that is equipped with
a filtration $\{\F_t\}_{t\ge 0}$.

Similarly to the correspondence between measures and random variables there is
an analogous random object associated to cylindrical measures:
\begin{definition}\label{de.cylrv}
A {\em cylindrical random variable $Y$ in $U$} is a linear map
\begin{align*}
 Y:U^\ast \to L^0(\Omega,\F,P).
\end{align*}
A cylindrical process $X$ in $U$ is a family $(X(t):\,t\ge 0)$ of
cylindrical random variables in $U$.
\end{definition}

The characteristic function of a cylindrical random  variable $X$ is
defined by
\begin{align*}
 \phi_X:U^\ast\to\C, \qquad \phi_X(a)=E[\exp(i Xa)].
\end{align*}
The concepts of cylindrical measures and cylindrical random variables match
perfectly. Indeed, if $Z=Z(a_1,\dots, a_n;B)$ is a cylindrical set for
$a_1,\dots, a_n\in U^\ast$ and $B\in \Borel(\R^n)$ we obtain a cylindrical
probability measure $\mu$ by the prescription
\begin{align}\label{eq.relcylmeas}
  \mu(Z):=P((Xa_1,\dots, Xa_n)\in B).
\end{align}
We call $\mu$ the {\em cylindrical distribution of $X$} and the
characteristic functions $\phi_\mu$ and $\phi_X$ of $\mu$ and $X$
coincide. Conversely for every cylindrical measure $\mu$ on
$\Z(U)$ there exists a probability space $(\Omega,\F,P)$ and a
cylindrical random variable $X:U^\ast\to L^0(\Omega,\F,P)$ such
that  $\mu$ is the cylindrical distribution of $X$, see
\cite[VI.3.2]{Vaketal}.

By some abuse of notation we define for a cylindrical process
$X=(X(t):\,t\ge 0)$:
\begin{align*}
  X(t):U^{\ast n}\to L^0(\Omega,\F,P; \R^n),\qquad
   X(t)(a_1,\dots, a_n):=(X(t)a_1,\dots,X(t)a_n).
\end{align*}
In this way, one obtains for fixed $(a_1,\dots, a_n)\in U^{\ast n}$
an $n$-dimensional stochastic process
\begin{align*}
  (X(t)(a_1,\dots, a_n):\, t\ge 0).
\end{align*}
It follows from \eqref{eq.relcylmeas} that its marginal distribution is given
by the image measure of the cylindrical distribution $\mu_t$ of $X(t)$:
 \begin{align}\label{eq.distmulticyl}
   P_{X(t)(a_1,\dots, a_n)}=\mu_t\circ \pi_{a_1,\dots, a_n}^{-1}
 \end{align}
for all $a_1,\dots, a_n\in U^{\ast n}$. Combining  \eqref{eq.distmulticyl} with
\eqref{eq.charimcyl} shows  that
\begin{align}\label{eq.charmulti}
  \phi_{X(t)(a_1,\dots, a_n)}(\beta_1,\dots, \beta_n)
   = \phi_{X(t)(\beta_1a_1+ \dots +\beta_na_n)}(1)
\end{align}
for all $\beta_1,\dots, \beta_n\in\R^n$ and $a_1,\dots, a_n\in U^\ast$.\\


We give now the proof of Theorem \ref{co.leykhint}.
\begin{proof} (of Theorem \ref{co.leykhint}).\\
Because of \eqref{eq.Levy-Khin-one}, i.e.
\begin{align*}
  \phi_{\mu}(a)=\phi_{\mu\circ a^{-1}}(1)
   = \exp\left(i\beta_a -\tfrac{1}{2} \sigma_a^2 + \int_{\R\setminus\{0\}}\left(e^{i\gamma }-1-i\gamma \1_{B_1}(\gamma)\right)
   \,\nu_a(d\gamma)\right),
\end{align*}
we have to show that $\phi_{\mu\circ a^{-1}}$ is in the claimed form.
 Theorem \ref{th.cyllevymeasure} implies that there exists a
 cylindrical L\'{e}vy measure $\nu$ such that $\nu_a=\nu\circ a^{-1}$ for each $a\in U^\ast$.
 By defining $\mm(a):=\beta_a$ it remains to show that the function
 \begin{align*}
   s:U^\ast\to \Rp,\qquad s(a):=\sigma_a^2
 \end{align*}
is a quadratic form. Let $X$ be a cylindrical random variable with
distribution $\mu$. By the L\'{e}vy-It\^o decomposition in $\R$
(see e.g. Chapter 2 in \cite{Dave04}) it follows that
\begin{align}\label{eq.li}
 Xa=\mm(a)  + \sigma_a W_a + \int_{0<\abs{\beta}<1} \beta \,\tilde{N}_a(d\beta)+
    \int_{\abs{\beta}\ge 1} \beta \,N_a(d\beta)\qquad\text{$P$-a.s.},
\end{align}
where $W_a$ is a real valued centred Gaussian random variable with $EW_a^2=1$,
$N_a$ is an independent Poisson random measure on $\R\setminus\{0\}$ and
$\tilde{N}_a$ is the compensated Poisson random measure.

By applying \eqref{eq.li} to $Xa$, $Xb$ and $X(a+b)$ for arbitrary $a,b\in
U^\ast$ we obtain
\begin{align}
 \sigma_{a+b}W_{a+b}&= \sigma_a W_a+ \sigma_b W_b\quad\text{$P$-a.s.}\label{eq.aux1}
\end{align}
Similarly, for $\beta \in \R$ we have
\begin{align}
 \sigma_{\beta a}W_{\beta a}&=\beta \sigma_{a} W_a\quad\text{$P$-a.s.}\label{eq.aux2}
\end{align}
By squaring both sides of (\ref{eq.aux2}) and then taking
expectations it follows that the function $s$ satisfies $s(\beta
a)=\beta^2 s(a)$. Similarly, one derives from \eqref{eq.aux1} that
$\sigma_{a+b}^2=\sigma^2_a+\sigma_b^2 +2\rho(a,b)$, where
$\rho(a,b):=Cov(\sigma_a W_a,\sigma_bW_b)$. Equation
\eqref{eq.aux1} yields for $c\in U^\ast$
\begin{align*}
  \rho(a+c,b)&=\Cov( \sigma_{a+c} W_{a+c},\, \sigma_b W_b)\\
 & =\Cov(\sigma_a W_a+ \sigma_c W_c,\,\sigma_b W_b)\\
 & = \rho(a,b)+\rho(c,b),
\end{align*}
which implies together with  properties of the covariance that $\rho$ is a
bilinear form. Thus the function
\begin{align}\label{eq.quadform1}
Q:U^\ast\times U^\ast\to \R,\qquad  Q(a,b):=s(a+b)-s(a)-s(b)= 2\rho(a,b)
\end{align}
is a bilinear form and $s$ is thus a quadratic form.
\end{proof}

The cylindrical process $X=(X(t):\,t\ge 0)$ is called {\em adapted to a given
filtration $\{\F_t\}_{t\ge 0}$}, if $X(t)a$ is $\F_t$-measurable for all $t\ge
0$ and all $a\in U^\ast$. The cylindrical process $X$ is said to have {\em
weakly independent increments} if for all $0\le t_0<t_1<\dots <t_n$ and all
$a_1,\dots, a_n\in U^\ast$ the random variables
\begin{align*}
(X(t_1)-X(t_0))a_1,\dots , (X(t_n)-X(t_{n-1}))a_n
\end{align*}
are independent.


\begin{definition}\label{de.cylLevy}
  An adapted cylindrical process $(L(t):\,t\ge 0)$ is called a {\em weakly
  cylindrical L\'{e}vy process} if
\begin{enumerate}
  \item[{\rm(a)}] for all $a_1,\dots, a_n\in U^\ast$ and $n\in\N$ the stochastic
  process $\big((L(t)(a_1,\dots, a_n):\, t\ge 0\big)$ is a L\'{e}vy process in $\R^n$.
\end{enumerate}
\end{definition}


By Definition \ref{de.cylLevy} the random variable $L(1)(a_1,\dots, a_n)$ is infinitely
divisible for all $a_1,\dots, a_n\in U^\ast$ and the equation \eqref{eq.distmulticyl}
implies that the cylindrical distribution of $L(1)$ is an infinitely divisible
cylindrical measure.

\begin{example}\label{ex.cylWiener}
An adapted cylindrical process $(W(t):\,t\ge 0)$ in $U$ is called a {\em weakly
cylindrical Wiener process}, if for all $a_1,\dots, a_n\in U^\ast$ and $n\in
\N$ the $\R^n$-valued stochastic process
\begin{align*}
\big((W(t)(a_1,\dots,a_{n}):\,t\ge 0\big)
\end{align*}
is a Wiener process in $\R^n$. Here we call an adapted stochastic process $(X(t):\,t\ge
0)$ in $\R^n$ a Wiener process if the increments $X(t)-X(s)$ are independent, stationary
and normally distributed with expectation $E[X(t)-X(s)]=0$ and covariance
Cov$[X(t)-X(s),X(t)-X(s)]=\abs{t-s}C$ for a non-negative definite symmetric matrix $C$.
If $C=\Id$ we call $X$ a {\em standard} Wiener process. Obviously, a weakly cylindrical
Wiener process is an example of a weakly cylindrical L\'{e}vy process. The characteristic
function of $W$ is given by
\begin{align*}
  \phi_{W(t)}(a)=\exp\left(-\tfrac{1}{2}t s(a)\right),
\end{align*}
where $s:U^\ast\to\Rp$ is a quadratic form, see \cite{riedle} for more details
on cylindrical Wiener processes.
\end{example}

\begin{example}\label{ex.cylpois}
  Let $\zeta$ be an element in the algebraic dual $U^{\ast\prime}$, i.e. a linear function
$\zeta:U^\ast\to \R$ which is not necessarily continuous. Then
\begin{align*}
  X:U^\ast\to L^0(\Omega,\F,P),\qquad
  Xa:=\zeta(a)
\end{align*}
defines a cylindrical random variable. We call its cylindrical distribution
$\mu$ a {\em cylindrical Dirac measure in $\zeta$}. It follows that
\begin{align*}
  \phi_X(a)=\phi_\mu(a)=e^{i \zeta(a)}\qquad\text{for all }a \in U^\ast.
\end{align*}
We define the {\em cylindrical Poisson process $(L(t):\,t\ge 0)$} by
\begin{align*}
  L(t)a:=\zeta(a)\, n(t) \qquad\text{for all }t\ge 0,
\end{align*}
where $(n(t):\, t\ge 0)$ is a real valued Poisson process with intensity
$\lambda>0$. It turns out that the cylindrical Poisson process is another
example of a weakly cylindrical L\'{e}vy process with characteristic function
\begin{align*}
  \phi_{L(t)}(a)=\exp\left( \lambda t \left(e^{i\zeta(a)}-1\right)\right).
\end{align*}
\end{example}

\begin{example}\label{ex.compcylpoisson}
  Let $(Y_k:\,k\in\N)$ be a sequence of cylindrical random variables
  each having cylindrical distribution $\rho$ and such that $\{Y_k
  a:\,k\in\N\}$ is independent for all $a\in U^\ast$.  If $(n(t):\,t\ge 0)$ is a real valued
  Poisson process of intensity $\lambda>0$ which is independent of $\{Y_k a:\,k\in\N,\,
  a\in U^\ast\}$ then  the {\em cylindrical compound Poisson process} $(L(t):\,t\ge 0)$ is defined
  by
  \begin{align*}
     L(t)a:=\begin{cases}
     0, &\text{if }t=0,\\
     Y_1a+\dots +Y_{n(t)}a, &\text{else,}
   \end{cases}
   \qquad\text{for all }a\in U^\ast.
  \end{align*}
The cylindrical compound Poisson process is a weakly cylindrical L\'{e}vy
process with
\begin{align*}
  \phi_{L(t)}(a)=\exp\left(t\lambda\int_{U}\left(e^{i\scapro{u}{a}}-1\right) \,\rho(du)\right).
\end{align*}
\end{example}

\begin{example}
 Let $\rho$  be a L{\'e}vy measure on $\R$ and  $\lambda$ be a positive measure on
a set $O\subseteq \R^d$. In the monograph \cite{PesZab} by Peszat and Zabczyk an {\em
impulsive cylindrical process on $L^2(O,\Borel(O),\lambda)$} is introduced in the
following way: let $\pi$ be the Poisson random measure on $[0,\infty)\times O\times \R$
with intensity measure $ds\,\lambda(d\xi)\,\rho(d\beta)$. Then for all measurable
functions $f:O\to\R$ with compact support a random variable is defined by
\begin{align*}
   Z(t)f:=\int_0^t \int_O\int_{\R} f(\xi)\beta \,\tilde{\pi}(ds,d\xi,d\beta)
\end{align*}
in $L^2(\Omega,\F,P)$ under the simplifying assumption that
\begin{align*}
   \int_{\R} \beta^2\, \rho(d\beta)<\infty.
\end{align*}
It turns out that the definition of $Z(t)$ can be extended to all
$f$ in $L^2(O,\Borel(O),\lambda)$ so that $Z=(Z(t):\,t \ge 0)$ is
a cylindrical process in the Hilbert space
$L^2(O,\Borel(O),\lambda)$. Moreover, $(Z(t)f:\, t\ge 0)$ is a
L{\'e}vy process for every $f\in L^2(O,\Borel(O),\lambda)$  and
$Z$ has the characteristic function
\begin{align}\label{eq.charPes}
\phi_{Z(t)}(f)=\exp\left(t \int_O\int_{\R\setminus\{0\}} \left(e^{i f(\xi)\beta} -1-i
f(\xi)\beta\right) \,\rho(d\beta) \,\lambda(d\xi)\right),
\end{align}
see Prop. 7.4 in \cite{PesZab}.

To consider this example in our setting we set $U=L^2(O,\Borel(O),\lambda)$ and
identify $U^\ast$ with $U$. By the results mentioned above and if we assume
weakly independent increments,  Lemma \ref{le.weaklyind} tells us that the
cylindrical process $Z$ is a weakly cylindrical L{\'e}vy process in accordance
with our Definition \ref{de.cylLevy}. By Corollary \ref{co.leykhint} it follows
that there exists a cylindrical L{\'e}vy measure $\nu$ on $\Z(U)$ such that
$\nu\circ f^{-1}$ is the L{\'e}vy measure of $(Z(t)f:\, t\ge 0)$ for all $f\in
U^\ast$. But on the other hand, if we define a measure by
\begin{align*}
  \nu_f:\Borel(\R)\to [0,\infty],
  \qquad \nu_f(B):=\int_O\int_{\R} \1_B(\beta f(\xi))\,\rho(d\beta) \lambda(d\xi)
\end{align*}
we can rewrite \eqref{eq.charPes} as
\begin{align*}
  \phi_{Z(t)}(f)
 &=  \exp\left(t \int_{\R\setminus\{0\}} \left(e^{i \beta} -1-i \beta\right)
 \, v_f(d\beta) \right)
\end{align*}
and by the uniqueness of the characteristics of a Levy process we see that $v_f=v\circ
f^{-1}$ for all $f\in U^\ast$.

\end{example}

\begin{example}
  A cylindrical process $(L(t):\,t\ge 0)$ is induced by a stochastic process $(X(t):\,t\ge 0)$ on $U$ if
  \begin{align*}
    L(t)a=\scapro{X(t)}{a} \qquad\text{for all }a \in U^\ast.
  \end{align*}
If $X$ is a L\'{e}vy process on $U$ then the induced process $L$ is a weakly
cylindrical L\'{e}vy process with the same characteristic function as $X$.
\end{example}

Our definition of a weakly cylindrical L\'{e}vy process is an
obvious extension of the definition of a finite-dimensional
L\'{e}vy processes and is exactly in the spirit of cylindrical
processes. The multidimensional formulation in Definition
\ref{de.cylLevy} would already be necessary to define a
finite-dimensional L\'{e}vy process by this approach and it allows
us to conclude that a weakly cylindrical L\'{e}vy process has
weakly independent increments. The latter property is exactly what
is needed in addition to a one-dimensional formulation:
\begin{lemma}\label{le.weaklyind}
For an adapted cylindrical process $L=(L(t):\,t\ge 0)$ the following are equivalent:
\begin{enumerate}
\item[{\rm (a)}] $L$ is a weakly cylindrical L\'{e}vy process;
\item[{\rm (b)}]
\begin{enumerate}
  \item[{\rm (i)}] $L$ has weakly independent increments;
  \item[{\rm (ii)}] $(L(t)a:\,t\ge 0)$ is a L\'{e}vy process for all $a\in U^\ast$.
\end{enumerate}
\end{enumerate}
\end{lemma}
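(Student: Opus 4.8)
The plan is to prove both implications by passing to characteristic functions and exploiting the linearity of the cylindrical random variables $L(t):U^\ast\to L^0(\Omega,\F,P)$ through the identity $\beta_1 L(t)a_1+\dots+\beta_n L(t)a_n = L(t)(\beta_1 a_1+\dots+\beta_n a_n)$. Throughout I write $\Delta_j := L(t_j)(a_1,\dots,a_n)-L(t_{j-1})(a_1,\dots,a_n)\in\R^n$ for the increment of the vector-valued process over $[t_{j-1},t_j]$.

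For the direction (a) $\Rightarrow$ (b), property (ii) is immediate on taking $n=1$ in Definition \ref{de.cylLevy}. For (i), I would fix $0\le t_0<\dots<t_n$ and $a_1,\dots,a_n\in U^\ast$. By (a) the $\R^n$-valued process $\big(L(t)(a_1,\dots,a_n):t\ge 0\big)$ is a L\'evy process, so its increments over the disjoint intervals $[t_{j-1},t_j]$ are jointly independent as $\R^n$-valued vectors. The scalar $(L(t_j)-L(t_{j-1}))a_j$ is the $j$-th coordinate of $\Delta_j$, hence a measurable function of it; since the $\Delta_j$ are independent and each chosen scalar comes from a distinct increment, the scalars are independent, which is exactly (i).

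For the direction (b) $\Rightarrow$ (a), I would fix $(a_1,\dots,a_n)\in U^{\ast n}$ and verify the four defining properties of an $\R^n$-valued L\'evy process for $\big(L(t)(a_1,\dots,a_n):t\ge 0\big)$. Starting at zero and stochastic continuity follow componentwise from (ii), using that convergence in probability in $\R^n$ is equivalent to convergence in probability of each coordinate. For stationarity of increments I would compute the characteristic function of $\Delta:=L(t)(a_1,\dots,a_n)-L(s)(a_1,\dots,a_n)$ at $\beta\in\R^n$: by linearity $\scapro{\beta}{\Delta}=(L(t)-L(s))b$ with $b:=\beta_1 a_1+\dots+\beta_n a_n$, and by (ii) the one-dimensional L\'evy process $(L(\cdot)b)$ satisfies $L(t)b-L(s)b\stackrel{d}{=}L(t-s)b$, so the joint characteristic function of $\Delta$ depends on $t-s$ only.

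The key step, which I expect to be the main obstacle, is independence of the increments in the vector sense, since (i) only asserts independence of \emph{scalars}. The crucial observation is that the definition of weakly independent increments permits a \emph{different} functional for each time interval. For $0\le t_0<\dots<t_m$ and $\beta_1,\dots,\beta_m\in\R^n$, setting $b_j:=\beta_{j,1}a_1+\dots+\beta_{j,n}a_n$ and using linearity gives $\scapro{\beta_j}{\Delta_j}=(L(t_j)-L(t_{j-1}))b_j$. By (i) the scalars $(L(t_j)-L(t_{j-1}))b_j$ for $j=1,\dots,m$ are independent, so the joint characteristic function $\EE[\exp(i\sum_j\scapro{\beta_j}{\Delta_j})]$ factorises as $\prod_j \EE[\exp(i\scapro{\beta_j}{\Delta_j})]$, which is precisely joint independence of the vector increments $\Delta_1,\dots,\Delta_m$. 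This completes the verification that $L$ is a weakly cylindrical L\'evy process.
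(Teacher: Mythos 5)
Your proof is correct and follows essentially the same route as the paper: stationarity of the vector increments via linearity and the identity \eqref{eq.charmulti}, stochastic continuity componentwise, and independence of the vector increments deduced from weakly independent increments with a different functional per interval. If anything, your characteristic-function factorisation using the linear combinations $b_j=\beta_{j,1}a_1+\dots+\beta_{j,n}a_n$ spells out rigorously the one step the paper states only tersely (it passes from independence of the coordinate scalars $(L(t_j)-L(t_{j-1}))a_{k_j}$ directly to independence of the vector increments, which on its own would not suffice without exactly your observation that the definition allows arbitrary, in particular linearly combined, functionals in each time interval).
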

\begin{proof} We have only to show that (b) implies (a) for which we fix
some $a_1,\dots, a_n\in U^\ast$. Because \eqref{eq.charmulti} implies that the
characteristic functions satisfy
\begin{align*}
  \phi_{(L(t)-L(s))(a_1,\dots, a_n)}(\beta)
 = \phi_{(L(t)-L(s))(\beta_1a_1+\dots +\beta_na_n)}(1)
\end{align*}
for all $\beta=(\beta_1,\dots, \beta_n)\in\R^n$ the condition (ii) implies that
the increments of $((L(t)a_1,\dots, L(t)a_n)):\, t\ge 0)$ are stationary. The
assumption (i) implies that
\begin{align*}
  (L(t_1)-L(t_0))a_{k_1},\dots, (L(t_n)-L(t_{n-1}))a_{k_n}
\end{align*}
are independent for all $k_1,\dots, k_n\in\{1,\dots, n\}$ and all $0\le t_0 <
\dots < t_n$. If follows that the $n$-dimensional random variables
\begin{align*}
  (L(t_1)-L(t_0))(a_1,\dots, a_n), \dots, (L(t_n)-L(t_{n-1}))(a_1,\dots, a_n)
\end{align*}
are independent which shows the independent increments of
$(L(t)(a_1,\dots, a_n):\,t\ge 0)$. The stochastic continuity
follows by the following estimate, where we use $|\cdot|_{n}$ to
denote the Euclidean norm in $\Rn$ and $c>0$:
\begin{align*}
 P(|(L(t)a_{1}, \ldots, L(t)a_{n})|_{n} > c)=P\left(|L(t)a_1|^{2}+\cdots + |L(t)a_n|^{2}>c^{2}\right)
\le \sum_{k=1}^n  P\left( |L(t)a_k| >\tfrac{c}{\sqrt{n}}\right),
\end{align*}
which completes the proof.
\end{proof}

Because $(L(t)a:\, t\ge 0)$ is a one-dimensional L\'{e}vy process, we may take
a \cadlag~version (see e.g. Chapter 2 of \cite{Dave04}).
 Then for every $a\in U^\ast$ the one-dimensional L\'{e}vy-It\^o
decomposition implies $P$-a.s.
\begin{align}\label{eq.levy-ito}
 L(t)a=\zeta_a t + \sigma_a W_a(t) + \int_{0<\abs{\beta}\le 1} \beta \,\tilde{N}_a(t,d\beta)+
    \int_{\abs{\beta}> 1} \beta \,N_a(t,d\beta),
 \end{align}
where $\zeta_a\in\R$, $\sigma_a\ge 0$, $(W_a(t):\,t\ge 0)$ is a real valued
standard Wiener process and $N_a$ is the Poisson random measure defined by
\begin{align*}
 N_a(t,B)= \sum_{0\le s\le t} \1_B(\Delta L(s)a)\qquad\text{for }B\in
 \Borel(\R\setminus\{0\}),
\end{align*}
where $\Delta(f(s)):=f(s)-f(s-)$ for any \cadlag~function
$f:\R\to\R$. The Poisson random measure $N_a$ gives rise to the
L\'{e}vy measure $\nu_a$ by
\begin{align*}
 \nu_a(B):=E[N_a(1,B)] \qquad\text{for }B\in
 \Borel(\R\setminus\{0\}).
\end{align*}
The compensated Poisson random measure $\tilde{N}_a$ is then defined
by
\begin{align*}
  \tilde{N}_a(t,B):=N_a(t,B)-t\nu_a(B).
\end{align*}
Note, that all terms in the sum on the right hand side of \eqref{eq.levy-ito} are
independent for each fixed $a\in U^\ast$. Combining with the L\'{e}vy-Khintchine formula
in Theorem \ref{co.leykhint} yields that
\begin{align*}
  \zeta_a&=\mm(a), \qquad
   \sigma_a=s(a) \qquad\text{and} \quad\nu_a=\nu\circ a^{-1}
\end{align*}
for all $a\in U^\ast$, where $\mm$, $s$ and $\nu$ are the characteristics associated to
the infinitely divisible cylindrical distribution of $L(1)$.

 By using the L\'{e}vy-It\^o decomposition
\eqref{eq.levy-ito} for the one-dimensional projections we define for each
$t\ge 0$
\begin{align*}
 & W(t):U^\ast \to L^2(\Omega,\F,P),\qquad    W(t)a:= s(a)W_a(t),\\
    & M(t):U^\ast \to L^2(\Omega,\F,P),\qquad M(t)a :=\int_{0<\abs{\beta}\le 1 }
           \beta\, \tilde{N}_a(t,d\beta), \\
    & P(t):U^\ast \to L^0(\Omega,\F,P),\qquad P(t)a :=\int_{\abs{\beta}> 1}
           \beta\, N_a(t,d\beta).
\end{align*}
The one-dimensional L\'{e}vy-It\^o decomposition \eqref{eq.levy-ito} is now of
the form
\begin{align}\label{eq.levyito}
  L(t)a=\mm(a)t+ W(t)a+ M(t)a + P(t)a\qquad\text{for all }a\in U^\ast.
\end{align}

\begin{theorem}
 Let $L=(L(t):\,t\ge 0)$ be a weakly cylindrical L\'{e}vy
  process in $U$.
Then $L$ satisfies \eqref{eq.levyito} (almost surely) where
  \begin{align*}
  &(W(t):\,t\ge 0)\quad \text{is a weakly cylindrical Wiener process},\\
  &(\mm(\cdot)t+M(t)+P(t):\, t\ge 0)\quad\text{is a cylindrical process}.
  \end{align*}
\end{theorem}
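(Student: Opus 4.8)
The decomposition \eqref{eq.levyito} is nothing but a rearrangement of the one-dimensional L\'evy--It\^o decomposition \eqref{eq.levy-ito} together with the definitions of $W(t)$, $M(t)$ and $P(t)$, so it holds $P$-a.s.\ for every fixed $a$ by construction; the content of the theorem is therefore the distributional description of $W$ and the linearity of the remainder. The guiding idea is that $W(t)a$ is exactly the Gaussian part, i.e.\ the continuous local-martingale component, of the one-dimensional L\'evy process $(L(t)a:\,t\ge 0)$, and that everything needed about $W$ can be read off from the multidimensional L\'evy--It\^o decomposition of the genuine $\R^n$-valued L\'evy processes furnished by Definition~\ref{de.cylLevy}. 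Adaptedness of $W$ is immediate from this interpretation, since the continuous martingale part is a measurable functional of the path $(L(s)a)_{s\le t}$ and $L$ is adapted.

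First I would establish that each $W(t)$ is linear, so that $W$ is a cylindrical process. Fix $a,b\in U^\ast$. Taking \cadlag~versions and using that $L(t)(a+b)=L(t)a+L(t)b$ holds $P$-a.s.\ for each $t$ (hence, by right-continuity, simultaneously for all $t$ on a set of full measure), the pair $(L(\cdot)a,L(\cdot)b)$ is a two-dimensional L\'evy process and $L(\cdot)(a+b)$ is its image under the linear map $T(x,y)=x+y$. Since the Gaussian part of a multidimensional L\'evy process transforms linearly under linear maps (the continuous Gaussian martingale component of $TZ$ is $T$ applied to that of $Z$, by uniqueness of the decomposition; see e.g.\ \cite{Dave04}), the Gaussian part of $L(\cdot)(a+b)$ equals $W(\cdot)a+W(\cdot)b$. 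Comparing with its intrinsic Gaussian part $W(\cdot)(a+b)$ gives $W(t)(a+b)=W(t)a+W(t)b$ $P$-a.s.; the scalar case $W(t)(\lambda a)=\lambda W(t)a$ is identical with $T(x)=\lambda x$. This is precisely the process analogue of \eqref{eq.aux1}--\eqref{eq.aux2}.

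Next I would fix $a_1,\dots,a_n\in U^\ast$ and invoke Definition~\ref{de.cylLevy}: the process $Z(t):=(L(t)a_1,\dots,L(t)a_n)$ is an honest $\R^n$-valued L\'evy process. Its $n$-dimensional L\'evy--It\^o decomposition has a Gaussian part that is an $\R^n$-valued Wiener process, and by uniqueness of the one-dimensional decomposition its $k$-th coordinate is the Gaussian part of $(L(t)a_k)$, namely $W(t)a_k$. Hence $(W(t)(a_1,\dots,a_n):\,t\ge 0)=(W(t)a_1,\dots,W(t)a_n)$ is a Wiener process in $\R^n$, which by Example~\ref{ex.cylWiener} means that $W$ is a weakly cylindrical Wiener process. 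One may further identify its covariance: by linearity $\sum_k\beta_k W(t)a_k=W(t)(\sum_k\beta_k a_k)$ is centred Gaussian with variance $t\,s(\sum_k\beta_k a_k)$, and polarising the quadratic form $s$ of Theorem~\ref{co.leykhint} (into the bilinear form $\rho$ of its proof) yields $\Cov(W(t)a_j,W(t)a_k)=t\,\rho(a_j,a_k)$, so the covariance matrix is $C=(\rho(a_j,a_k))_{j,k}$ as required.

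Finally, since each $L(t)$ is a cylindrical random variable (hence linear) and $W(t)$ is linear by the previous step, the difference
\[
 \mm(a)t+M(t)a+P(t)a=L(t)a-W(t)a
\]
is linear in $a$, so $(\mm(\cdot)t+M(t)+P(t):\,t\ge 0)$ is a cylindrical process. I expect the only delicate point to be the additivity of the Gaussian part in the second and third paragraphs: one cannot argue term by term, because the truncation $\{\abs{\beta}\le 1\}$ separating $M$ from $P$ is not preserved under addition of the jumps of $L(\cdot)a$ and $L(\cdot)b$, so the argument must isolate the continuous Gaussian component and appeal to the uniqueness of the L\'evy--It\^o decomposition together with the consistency of the \cadlag~versions. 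Once that is in place, independence and stationarity of the increments of $W$ come for free from the genuine multidimensional L\'evy process structure, rather than from the weaker weak independence of the increments of $L$.
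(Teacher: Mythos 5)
Your proposal is correct and takes essentially the same route as the paper: both identify $(W(t)a_1,\dots,W(t)a_n)$ with the Gaussian part of the genuine $\R^n$-valued L\'evy process $(L(t)(a_1,\dots,a_n))$ via uniqueness of the multidimensional L\'evy--It\^o decomposition, and then obtain linearity of $\mm(\cdot)t+M(t)+P(t)$ by subtracting $W$ from the cylindrical random variable $L(t)$. Your explicit derivation of the linearity of $W(t)$ (pushing the Gaussian part through the linear map $T(x,y)=x+y$, the process analogue of \eqref{eq.aux1}--\eqref{eq.aux2}) spells out a step the paper leaves implicit when it asserts that ``$L$ and $W$ are cylindrical processes,'' and your remark that the truncation $\{\abs{\beta}\le 1\}$ prevents a term-by-term argument correctly identifies why the uniqueness of the decomposition, rather than a direct comparison of the jump integrals, must carry the proof.
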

\begin{proof}
By \eqref{eq.levyito} we know that
\begin{align*}
  L(t)a=\mm(a)t + W(t)a + R(t)a \qquad\text{for all }a\in U^\ast,
\end{align*}
where $R(t)a=M(t)a+P(t)a$. By applying this representation to every component of the
$n$-dimensional stochastic process $(L(t)(a_1,\dots, a_n):\,t\ge 0)$ for $a_1,\dots,
a_n\in U^{\ast}$ we obtain
\begin{align*}
    L(t)(a_1,\dots, a_n)=(\mm(a_1),\dots, \mm(a_n))t + (W(t)a_1,\dots, W(t)a_n) + (R(t)a_1,\dots,R(t)a_n).
\end{align*}
But on the other hand the $n$-dimensional L\'{e}vy process
$(L(t)(a_1,\dots, a_n):\, t\ge 0)$ also has a L\'{e}vy-It\^{o}
decomposition where the Gaussian part is an $\R^n$-valued Wiener
process. By uniqueness of the decomposition it follows that the
Gaussian part equals $((W(t)a_1,\dots,W(t) a_n):\,t\ge 0)$ (a.s.)
which ensures that the latter is indeed a weakly cylindrical
Wiener process (see the definition in Example \ref{ex.cylWiener}).

Because $L$ and $W$ are cylindrical processes it follows that $a\mapsto
\mm(a)t+M(t)a+P(t)a$ is also linear which completes the proof.
\end{proof}

One might expect that the random functions $P$ and $M$ are also cylindrical processes,
i.e. linear mappings. But the following example shows that this is not true in general:
\begin{example}\label{ex.Poissonnotlinear}
Let $(L(t):\,t\ge 0)$ be the cylindrical Poisson process from Example \ref{ex.cylpois}.
 We
obtain
\begin{align*}
  N_{a}(t,B)&=\sum_{s\in [0,t]}\1_B(\zeta(a)\Delta n(s))\\
  &=\1_B(\zeta(a))\,n(t)
\end{align*}
for all $a\in U^{\ast}$ and $B\in\Borel(\R\setminus\{0\})$. The image measures $\nu\circ
a^{-1}$ of the cylindrical L\'{e}vy measure $\nu$ of $L$ are given by
\begin{align*}
  \nu\circ {a}^{-1}(B)=E[N_a(1,B)]=\1_{B}(\zeta(a))\,\lambda.
\end{align*}
Then we have
\begin{align*}
  P(t)a=\int_{\abs{\beta}>1} \beta\, N_a(t,d\beta)
  &=\sum_{s\in [0,t]} \Delta L(s)\1_{\{\abs{\beta}>1\}}(\Delta L(s))\\
  &= \zeta(a)\sum_{s\in [0,t]} \Delta n(s) \1_{\{\abs{\beta}>1\}}(\zeta(a)\Delta n(s))\\
  &= \zeta(a)n(t)\1_{\{\abs{\beta}>1\}}(\zeta(a)).
\end{align*}
We obtain analogously that
\begin{align*}
  M(t)a=\int_{\abs{\beta}\le 1} \beta\, \tilde{N}_a(t,d\beta)
  &=\int_{\abs{\beta}\le 1} \beta\, N_a(t,d\beta)-
  t\int_{\abs{\beta}\le 1}\beta \, (\nu\circ a^{-1})(d\beta)\\
  &= \zeta(a)(n(t)-t\lambda)\1_{\{\abs{\beta}\le 1\}}(\zeta(a)).
\end{align*}
Defining the term $\mm$ by
\begin{align*}
  \mm(a)=\lambda \1_{\{\abs{\beta}\le 1\}}(\zeta(a))
\end{align*}
gives the L\'{e}vy-It\^o decomposition \eqref{eq.levyito}. But it is easy to
see that none of the terms $P(t), M(t)$ and $\mm$ is linear because the
truncation function
\begin{align*}
  a\mapsto  \1_{\{\abs{\beta}\le 1\}}(\zeta(a))
\end{align*}
is not linear.

For an arbitrary truncation function $h_a:\R\to\Rp$ which might
even depend on $a\in U^\ast$ a similar calculation shows the
non-linearity of the analogous terms.
\end{example}


\begin{example}
  Let $(L(t):\,t\ge 0)$ be the cylindrical compound Poisson process introduced
  in Example \ref{ex.compcylpoisson}. If we define for $a\in U^\ast$ a sequence of stopping times
  recursively by $T_0^a:=0$ and $T_n^a:=\inf\{t>T_{n-1}^a:\, \abs{\Delta
  L(t)a}>1\}$ then it follows that
  \begin{align*}
    \int_{\abs{\beta}>1}\beta\, N_a(t,d\beta)=J_1(a)+\dots + J_{N_a(t,B_1^c)}(a),
  \end{align*}
where $B_1^c=\{\beta\in\R:\,\abs{\beta}>1\}$ and
\begin{align*}
  J_n(a):=\int_{\abs{\beta}>1} \beta\,N_a(T_n^a,d\beta)-\int_{\abs{\beta}>1}\beta\,
  N_a(T_{n-1}^a,d\beta).\\
\end{align*}
\end{example}

We say that a cylindrical L\'{e}vy process $(L(t), t \geq 0)$ is
{\it weak order $2$} if $E\abs{L(t)a}^2<\infty$ for all $a\in
U^\ast$ and $t\ge 0$. In this case, we can decompose $L$ according
to
\begin{align}\label{eq.levyito2}
  L(t)a= \mm_2(a)t+ W(t)a + M_2(t)a \qquad\text{for all }a\in U^\ast,
\end{align}
where $\mm_2(a) = r(a) + \int_{|\beta| > 1}\beta \, \nu_{a}(d\beta)$ and
\begin{align*}
    & M_2(t):U^\ast \to L^2(\Omega,\F,P),\qquad M_2(t)a :=\int_{\R\setminus\{0\}}
           \beta \,\tilde{N}_a(t,d\beta).
\end{align*}
In this representation it turns out that all terms are linear:
\begin{corollary}\label{co.linear}
 Let $L=(L(t):\,t\ge 0)$ be a weakly cylindrical L\'{e}vy
  process of weak order 2 on $U$.
Then $L$ satisfies \eqref{eq.levyito2} with
  \begin{align*}
  &\mm_2:U^\ast\to\R\quad\text{linear}, \\
  &(W(t):\,t\ge 0)\quad \text{is a weakly cylindrical Wiener process},\\
  &(M_2(t)):\, t\ge 0)\quad\text{is a cylindrical process}.
  \end{align*}
\end{corollary}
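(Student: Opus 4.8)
The plan is to base everything on the single identity $\mm_2(a)=E[L(1)a]$: once this is available, linearity of all three terms in \eqref{eq.levyito2} becomes transparent, since the only non-trivial ingredient is then the linearity of $a\mapsto E[L(1)a]$ as a composition of two linear maps.

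First I would extract the analytic content of the weak order $2$ hypothesis. Since $E\abs{L(1)a}^2<\infty$, the one-dimensional L\'evy measure $\nu_a=\nu\circ a^{-1}$ of the L\'evy process $(L(t)a:\,t\ge 0)$ satisfies $\int_{\abs{\beta}>1}\beta^2\,\nu_a(d\beta)<\infty$; combined with the defining property $\int_{0<\abs{\beta}\le 1}\beta^2\,\nu_a(d\beta)<\infty$ of a L\'evy measure this gives $\int_{\R\setminus\{0\}}\beta^2\,\nu_a(d\beta)<\infty$. This is precisely the integrability that makes $M_2(t)a=\int_{\R\setminus\{0\}}\beta\,\tilde N_a(t,d\beta)$ a well-defined square-integrable martingale with $E[M_2(t)a]=0$. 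Moreover $\int_{\abs{\beta}>1}\abs{\beta}\,\nu_a(d\beta)<\infty$, so one may compensate the large-jump term $P(t)a$ of \eqref{eq.levyito} by subtracting $t\int_{\abs{\beta}>1}\beta\,\nu_a(d\beta)$ and absorbing it into the drift; this transforms \eqref{eq.levyito} into \eqref{eq.levyito2} and confirms that $L$ indeed satisfies \eqref{eq.levyito2}.

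Linearity of $\mm_2$ then follows quickly. Since $E[W(t)a]=0$ (the Gaussian part is centred) and $E[M_2(t)a]=0$, taking expectations in \eqref{eq.levyito2} gives $E[L(t)a]=\mm_2(a)t$, so $\mm_2(a)=E[L(1)a]$ for all $a\in U^\ast$. The map $a\mapsto L(1)a$ is linear because $L(1)$ is a cylindrical random variable, and the weak order $2$ assumption places $L(1)a$ in $L^2(\Omega,\F,P)\subseteq L^1(\Omega,\F,P)$, on which $X\mapsto E[X]$ is a linear functional; hence $\mm_2$ is a composition of linear maps and is itself linear.

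For $W$ I would simply cite the theorem that established the decomposition \eqref{eq.levyito}, which already shows $(W(t):\,t\ge 0)$ is a weakly cylindrical Wiener process; in particular each $W(t):U^\ast\to L^0(\Omega,\F,P)$ is linear. Finally, writing $M_2(t)a=L(t)a-\mm_2(a)t-W(t)a$ exhibits $a\mapsto M_2(t)a$ as a linear combination of the three linear maps $a\mapsto L(t)a$, $a\mapsto\mm_2(a)$ and $a\mapsto W(t)a$, so each $M_2(t)$ is linear and $(M_2(t):\,t\ge 0)$ is a cylindrical process. The one substantive step is the first paragraph: deriving $\int_{\R\setminus\{0\}}\beta^2\,\nu_a(d\beta)<\infty$ and hence the vanishing mean of $M_2(t)a$ from weak order $2$. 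This is exactly what fails without the moment assumption, where Example \ref{ex.Poissonnotlinear} shows the truncated drift need not be linear.
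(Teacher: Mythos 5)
Your proof is correct and follows essentially the same route as the paper: take expectations in \eqref{eq.levyito2}, use that $W(t)a$ and $M_2(t)a$ are centred to identify $\mm_2(a)t=E[L(t)a]$, deduce linearity of $\mm_2$ from the linearity of $a\mapsto L(t)a$ and of the expectation, and then obtain linearity of $M_2(t)=L(t)-\mm_2(\cdot)t-W(t)$ by subtraction, with the Wiener-process claim for $W$ inherited from the preceding decomposition theorem. Your first paragraph merely supplies the integrability details (finiteness of $\int_{\R\setminus\{0\}}\beta^2\,\nu_a(d\beta)$ and the compensation of the large-jump term) that the paper asserts without proof when stating \eqref{eq.levyito2}, which is a welcome but not divergent addition.
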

\begin{proof}
  Let $a,b\in U^\ast$ and $\gamma\in \R$. Taking expectation in \eqref{eq.levyito2} yields
\begin{align*}
  \mm_2(\gamma a+b)t=E[L(t)(\gamma a+b)]
  =\gamma E[L(t)a]+E[L(t)b]=
  \gamma \mm_2( a)t+ r_2(b)t.
\end{align*}
Thus, $\mm_2$ is linear and since also $W$ and $L$ in \eqref{eq.levyito2} are
linear it follows that $M_2$ is a cylindrical process.
\end{proof}

But our next example shows that the assumption of finite second moments is not
necessary for a ``cylindrical'' version of the L\'{e}vy-It\^o decomposition:
\begin{example}\label{ex.LevyItoind}
  Let $(L(t):\, t\ge 0)$ be a weakly cylindrical L\'{e}vy process which is induced  by a L\'{e}vy
  process $(X(t):\, t\ge 0)$ on $U$, i.e.
  \begin{align*}
    L(t)a=\scapro{X(t)}{a}\qquad\text{for all }a\in U^\ast, t\ge 0.
  \end{align*}
The L\'{e}vy process $X$ can be decomposed according to
\begin{align*}
  X(t)=\mm t + W(t) + \int_{0<\norm{u}\le 1} u\,\tilde{Y}(t,du)
   + \int_{\norm{u}> 1} u Y(t,du),
\end{align*}
where $\mm\in U$, $(W(t):\, t\ge 0)$ is an $U$-valued Wiener process and
\begin{align*}
  Y(t,C)=\sum_{s\in [0,t]}\1_{C}(\Delta X(s)) \qquad \text{for } C\in \Borel(U),
\end{align*}
see \cite{OnnoMarkus}. Obviously, the cylindrical L\'{e}vy process $L$ is
decomposed according to
\begin{align*}
  L(t)a=\scapro{\mm}{a}t + \scapro{W(t)}{a}
  +\scapro{\int_{0<\norm{u}\le 1} u\,\tilde{Y}(t,du)}{a}
   + \scapro{\int_{\norm{u}> 1} u Y(t,du)}{a},
\end{align*}
for all $a\in U^\ast$. All terms appearing in this decomposition are linear even for a
L\'{e}vy process $X$ without existing weak second moments, i.e. with
$E\scapro{X(1)}{a}^2=\infty$.

 More specificially and for comparison with Example
\ref{ex.Poissonnotlinear} let $(X(t):\,t \ge 0)$ be a Poisson process on $U$, i.e.
$X(t)=u_0 n(t)$ where $u_0\in U$ and $(n(t):\,t\ge 0)$ is a real valued Poisson process
with intensity $\lambda>0$. Then we obtain
\begin{align*}
\int_{0<\norm{u}\le 1} u \,\tilde{Y}(t,du)=
  \begin{cases} 0, & \norm{u_0}>1,\\
       (n(t)-\lambda t)u_0,& \norm{u_0}\le 1.
  \end{cases}\\
\end{align*}
\end{example}

\section{Integration}

For the rest of this paper we will always assume that our cylindrical L\'{e}vy
process $(L(t), t \geq 0)$ is {\it weakly \cadlag}, i.e. the one-dimensional
L\'{e}vy processes $(L(t)a, t \geq 0)$ are \cadlag~for all $a \in U^\ast$.

\subsection{Covariance operator}\label{se.covariance}

Let $L$ be a weakly cylindrical L\'{e}vy process of weak order 2 with
decomposition \eqref{eq.levyito2}. Then the prescription
\begin{align}\label{eq.M2}
  M_2(t):U^\ast\to L^2(\Omega, \F,P),
  \qquad M_2(t)a=\int_{\R\setminus\{0\}}
  \beta\,\tilde{N}_a(t,d\beta)
\end{align}
defines a cylindrical process $(M_2(t):\, t\ge 0)$ which has weak
second moments. Thus, we can define the covariance operators:
\begin{align*}
  Q_2(t):U^\ast\to U^{\ast\prime},\qquad (Q_2(t)a)(b)
  &= E\left[(M_2(t)a)(M_2(t)b)\right]\\
  &=E\left[\left(\int_{\R\setminus\{0\}} \beta
  \,\tilde{N}_a(t,d\beta)\right)\left(
   \int_{\R\setminus\{0\}} \beta
   \,\tilde{N}_b(t,d\beta)\right)\right],
\end{align*}
where $ U^{\ast\prime}$ denotes the algebraic dual of $U^\ast$. In
general one can not assume that the image $Q_2(t)a$ is in the
bidual space $U^{\ast\ast}$ or even $U$ as one might expect for
ordinary $U$-valued stochastic processes with weak second moments.
We give a counterexample for that fact after we know that there is
no need to consider all times $t$:
\begin{lemma}\label{le.Qt}
  We have $Q_2(t)=tQ_2(1)$ for all $t\ge 0$.
\end{lemma}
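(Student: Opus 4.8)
The plan is to reduce the operator identity to a scalar scaling relation and then invoke the isometry for compensated Poisson integrals. Since Corollary~\ref{co.linear} guarantees that $M_2(t)$ is a cylindrical process, i.e.\ that $a \mapsto M_2(t)a$ is linear, the map
$$
(a,b) \longmapsto (Q_2(t)a)(b) = E\left[(M_2(t)a)(M_2(t)b)\right]
$$
is a symmetric bilinear form on $U^\ast \times U^\ast$ for each fixed $t \ge 0$. Consequently it is completely determined by its diagonal values $a \mapsto (Q_2(t)a)(a) = E[(M_2(t)a)^2]$ through the polarization identity
$$
2(Q_2(t)a)(b) = E\left[(M_2(t)(a+b))^2\right] - E\left[(M_2(t)a)^2\right] - E\left[(M_2(t)b)^2\right],
$$
where I have used linearity to write $M_2(t)(a+b) = M_2(t)a + M_2(t)b$. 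Thus it suffices to establish the scalar relation $E[(M_2(t)a)^2] = t\, E[(M_2(1)a)^2]$ for every single $a \in U^\ast$.

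To this end I would fix $a \in U^\ast$ and recall that $M_2(t)a = \int_{\R\setminus\{0\}} \beta\, \tilde{N}_a(t,d\beta)$ is the compensated-jump martingale of the one-dimensional L\'evy process $(L(t)a : t \ge 0)$, whose L\'evy measure is $\nu_a$. The weak order $2$ hypothesis, together with the general property $\int_{\{|\beta|\le 1\}}\beta^2\,\nu_a(d\beta) < \infty$ of any L\'evy measure, ensures $\int_{\R\setminus\{0\}}\beta^2\,\nu_a(d\beta) < \infty$, so that $M_2(t)a$ genuinely lies in $L^2(\Omega,\F,P)$ as asserted in \eqref{eq.M2}. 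The classical isometry for integrals against a compensated Poisson random measure then yields
$$
E\left[(M_2(t)a)^2\right] = t \int_{\R\setminus\{0\}} \beta^2\,\nu_a(d\beta).
$$
The right-hand side is manifestly linear in $t$ and equals $t\,E[(M_2(1)a)^2]$, which is exactly the scalar scaling relation required.

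Combining the two steps, for arbitrary $a,b \in U^\ast$ each of the three diagonal expectations in the polarization identity scales linearly in $t$, whence $(Q_2(t)a)(b) = t\,(Q_2(1)a)(b)$; since this holds for all $b$ (and all $a$), we conclude $Q_2(t) = tQ_2(1)$ for every $t \ge 0$. I expect the only point requiring genuine care to be the justification of the compensated-Poisson isometry and the attendant $L^2$-integrability: one must confirm that the weak order $2$ assumption really does deliver finiteness of $\int_{\R\setminus\{0\}}\beta^2\,\nu_a(d\beta)$, thereby controlling the large-jump contribution, before the isometry can be applied over the full range $\R\setminus\{0\}$ rather than merely on $\{|\beta|\le 1\}$. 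Everything else is routine bilinear bookkeeping.
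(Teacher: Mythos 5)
Your proof is correct, but it takes a genuinely different route from the paper's. The paper exploits the L\'evy property of $(M_2(t)(\beta_1 a+\beta_2 b):\,t\ge 0)$ to obtain the power relation $\phi_{M_2(t)a,M_2(t)b}=\left(\phi_{M_2(1)a,M_2(1)b}\right)^t$ for the joint characteristic function, and then extracts the mixed moment $E[(M_2(t)a)(M_2(t)b)]$ by differentiating twice at the origin and using $E[M_2(1)a]=0$; this treats the cross term head-on and never computes any integral against a L\'evy measure. You instead reduce the cross term to diagonal terms by polarization --- legitimately, since Corollary \ref{co.linear} gives $M_2(t)(a+b)=M_2(t)a+M_2(t)b$ almost surely, and each of $a$, $b$, $a+b$ is itself a functional to which the one-dimensional theory applies --- and then invoke the It\^o isometry $E\left[(M_2(t)c)^2\right]=t\int_{\R\setminus\{0\}}\beta^2\,\nu_c(d\beta)$ for $c\in\{a,b,a+b\}$. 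Both arguments rest on the same two inputs (linearity of $M_2(t)$ from Corollary \ref{co.linear}, and the L\'evy property of the scalar projections), but your version buys two things: it avoids having to justify the twice-differentiability of the characteristic function under the expectation sign, which the paper's computation tacitly assumes (it does hold here, precisely because of the weak order 2 hypothesis), and it produces the explicit second-moment formula that the paper needs again later anyway (compare \eqref{eq.int=1} and the proof of Lemma \ref{le.crossexpectation}). Your closing caveat is also correctly placed: applying the isometry over all of $\R\setminus\{0\}$ rather than on $\{\abs{\beta}\le 1\}$ requires $\int_{\abs{\beta}>1}\beta^2\,\nu_a(d\beta)<\infty$, and this does follow from $E\abs{L(t)a}^2<\infty$ by the standard moment criterion for one-dimensional L\'evy processes (e.g.\ Sato \cite{sato}, Theorem 25.3) --- the same fact the paper uses implicitly when asserting that $M_2(t)$ maps into $L^2(\Omega,\F,P)$ in \eqref{eq.M2}.
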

\begin{proof}
The characteristic function of the 2-dimensional random variable $(M_2(t)a,M_2(t)b)$
satisfies for all $\beta_1,\beta_2\in\R$:
\begin{align*}
  \phi_{M_2(t)a,M_2(t)b}(\beta_1,\beta_2)
  &= E\left[\exp\left(i(\beta_1 M_2(t)a+\beta_2 M_2(t)b)\right)\right]\\
  &=E\left[\exp (iM_2(t)(\beta_1a+\beta_2b))\right]\\
  &=\left(E\left[\exp (iM_2(1)(\beta_1a+\beta_2b))\right]\right)^t\\
  &=  \left( \phi_{M_2(1)a,M_2(1)b}(\beta_1,\beta_2)\right)^t.
\end{align*}
This relation enables us to calculate
\begin{align*}
&   \frac{\partial}{\partial \beta_2}\frac{\partial}{\partial\beta_1}
 \phi_{M_2(t)a,M_2(t)b}(\beta_1,\beta_2)\\
 &\qquad=   \frac{\partial}{\partial \beta_2}\frac{\partial}{\partial\beta_1}
     \left(\phi_{M_2(1)a,M_2(1)b}(\beta_1,\beta_2)\right)^t\\
 &\qquad=t(t-1)\left(\phi_{M_2(1)a,M_2(1)b}(\beta_1,\beta_2)\right)^{t-2}\frac{\partial}{\partial
 \beta_2} \phi_{M_2(1)a,M_2(1)b}(\beta_1,\beta_2) \frac{\partial}{\partial
 \beta_1} \phi_{M_2(1)a,M_2(1)b}(\beta_1,\beta_2)\\
 &\qquad \qquad + t\left(\phi_{M_2(1)a,M_2(b)}(\beta_1,\beta_2)\right)^{t-1}
    \frac{\partial}{\partial \beta_2}\frac{\partial}{\partial\beta_1}
 \phi_{M_2(1)a,M_2(1)b}(\beta_1,\beta_2).
\end{align*}
By recalling that
\begin{align*}
  \frac{\partial}{\partial
 \beta_1} \phi_{M_2(1)a,M_2(1)b}(\beta_1,\beta_2)|_{\beta_1=0,\beta_2=0}=i\,E[M_2(1)a]=0,
\end{align*}
the representation above of the derivative can be used to obtain
\begin{align*}
-E[(M_2(t)a)(M_2(t)b)] &=  \frac{\partial}{\partial
\beta_2}\frac{\partial}{\partial\beta_1}
    \phi_{M_2(t)a,M_2(t)b}(\beta_1,\beta_2)|_{\beta_1=0,\beta_2=0}\\
    &=  t \frac{\partial}{\partial \beta_2}\frac{\partial}{\partial\beta_1}
    \phi_{M_2(1)a,M_2(1)b}(\beta_1,\beta_2)|_{\beta_1=0,\beta_2=0}\\
    &=-tE[(M_2(1)a)(M_2(1)b)],
\end{align*}
which completes our proof.
\end{proof}

Because of Lemma \ref{le.Qt} we can simplify our notation and write $Q_2$ for $Q_2(1)$.

\begin{example}\label{ex.Qdiscon}
  For the cylindrical Poisson process in Example \ref{ex.Poissonnotlinear} we have
\begin{align*}
M_2(t)=\int_{\R\setminus\{0\}}\beta\, \tilde{N}_a(t,d\beta) = \zeta(a)(n(t)-\lambda t)
  \qquad\text{for all } a\in U^\ast.
\end{align*}
 It follows that
\begin{align*}
  (Q_2a)(b)&=E\left[(M_2(1)a)( M_2(1)b)\right]\\
  &= \zeta(a)\zeta(b) E\left[\abs{n(1)-\lambda}^2\right]\\
  &= \zeta(a)\zeta(b) \lambda .
\end{align*}
If we choose $\zeta$ discontinuous then $Q_2(a)$ is discontinuous and thus $Q_2(a)\notin
U^{\ast\ast}$.
\end{example}

\begin{definition}\label{de.strong}
The cylindrical process $M_2$ is called {\em strong} if the covariance operator
  \begin{align*}
      Q_2:U^\ast\to U^{\ast\prime},\qquad Q_2a(b)
 =E\left[\left(\int_{\R\setminus\{0\}} \beta
  \,\tilde{N}_a(1,d\beta)\right)\left(
   \int_{\R\setminus\{0\}} \beta
   \,\tilde{N}_b(1,d\beta)\right)\right],
  \end{align*}
 maps to $U$.
\end{definition}

\begin{lemma}\label{le.strong}
  If the cylindrical L\'{e}vy measure $\nu$ of the cylindrical L\'{e}vy process $M_2$ extends to a Radon measure
  then $M_2$ is strong.
\end{lemma}
\begin{proof} It is easily seen that the operator
\begin{align*}
  G:U^\ast\to L^2(U,\Borel(U),\nu),\qquad
  Ga=\scapro{\cdot}{a}\1_{U}(\cdot)
\end{align*}
is a closed operator and therefore $G$ is continuous. Thus, we have that
 \begin{align*}
   \Big((Q_2a)(b)\Big)^2&\le E\abs{M_2(1)a}^2 E\abs{M_2(1)b}^2\\
   &= E\abs{M_2(1)a}^2 \int_{\R\setminus\{0\}} \beta^2\,
   (\nu\circ b^{-1})(d\beta)\\
   &= E\abs{M_2(1)a}^2 \int_{U} \abs{\scapro{u}{b}}^2\, \nu(du)\\
  &\le E\abs{M_2(1)a}^2 \norm{G}^2\norm{b}^2,
 \end{align*}
which completes the proof.
\end{proof}

If $M_2$ is strong then the covariance operator $Q_2$ is a
symmetric positive linear operator which maps $U^\ast$ to $U$. A
factorisation lemma (see e.g. Proposition III.1.6 (p.152) in
\cite{Vaketal}) implies that there exists a Hilbert subspace
$(H_{Q_2}, [\cdot,\cdot]_{H_{Q_2}})$ of $U$ such that
\begin{enumerate}
  \item[{\rm (a)}] $Q_2(U^\ast)$ is dense in $H_{Q_2}$;
  \item[{\rm (b)}] for all $a,b\in U^\ast$ we have:
$\;[Q_2a, Q_2b]_{H_{Q_2}}=\scapro{Q_2a}{b}$.
\end{enumerate}
Moreover, if $i_{Q_2}$ denotes the natural embedding of $H_{Q_2}$ into $U$ we have
\begin{enumerate}
  \item[{\rm (c)}] $Q_2=i_{Q_2} i^\ast_{Q_2}$.
\end{enumerate}
The Hilbert space $H_{Q_2}$ is called the {\em reproducing kernel Hilbert space
associated with $Q_2$}.

\begin{example}
We have the following useful formulae:
\begin{align*}
  \Cov(M_2(1)a,\,M_2(1)b)= \scapro{Q_2a}{b}=[i^\ast_{Q_2} a, i^\ast_{Q_2} b]_{H_{Q_2}}.
\end{align*}
In particular, we have
\begin{align}\label{eq.Covandnorm}
  E\abs{M_2(1)a}^2 = \norm{i^\ast_{Q_2}  a}_{H_{Q_2}}^2.
\end{align}
\end{example}

\begin{remark}\label{re.WandMcov}
 Assume that $(L(t):\,t\ge 0)$ is a weakly cylindrical L\'{e}vy process  of weak order 2 in
 U with $E[L(t)a]=0$ for all $a\in U^\ast$. Then its decomposition according to Corollary
 \ref{co.linear} is given by
 \begin{align*}
   L(t)a=W(t)a+ M_2(t)a\qquad\text{for all }a\in U^\ast,
 \end{align*}
where $W=(W(t):\,t\ge 0)$ is a weakly cylindrical Wiener process and $M_2$ is of the form
\eqref{eq.M2} with covariance operator $Q_{2}$. The covariance operator $Q_1$ of $W$,
\begin{align*}
  Q_1:U^\ast\to U^{\ast \prime}, \qquad (Q_1(a))(b)=E[(W(1)a)(W(1)b)]
\end{align*}
may exhibit similar behaviour to $Q_2$ in that it might be
discontinuous, see \cite{riedle} for an example. Consequently, we
call $L$  a {\em strongly cylindrical L\'{e}vy process of weak
order 2} if both $Q_1$ and $Q_2$ map to $U$. By independence of
$W$ and $M_2$ it follows that
\begin{align*}
  Q:U^\ast \to U\qquad (Qa)(b):=(Q_1a)(b)+(Q_2 a)(b)
\end{align*}
is the covariance operator of $L$. As before the operator $Q$ can be factorised through a
Hilbert space $H_Q$.
\end{remark}

\subsection{Representation as a Series}

\begin{theorem}\label{th.cylsum}
If the cylindrical process $M_2$ of the form \eqref{eq.M2} is
strong then there exist a Hilbert space $H$ with an orthonormal
basis $(e_k)_{k\in\N}$, $F\in L(H,U)$ and uncorrelated real valued
\cadlag~L\'{e}vy processes $(m_k)_{k\in\N}$ such that
\begin{align}\label{eq.cylsum}
 M_2(t)a=\sum_{k=1}^\infty \scapro{Fe_k}{a} m_k(t) \qquad
    \text{in }L^2(\Omega,\F,P)\text{ for all $a\in U^\ast$}.
\end{align}
\end{theorem}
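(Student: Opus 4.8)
The theorem asks me to represent the cylindrical process $M_2$ as an infinite series built from one-dimensional Lévy processes, using the reproducing kernel Hilbert space structure established earlier. Let me think about what tools I have.

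From the strong assumption, $Q_2: U^* \to U$ is symmetric positive, and there's an RKHS $H_{Q_2}$ with embedding $i_{Q_2}: H_{Q_2} \to U$ satisfying $Q_2 = i_{Q_2} i_{Q_2}^*$. The key formula is $E|M_2(1)a|^2 = \|i_{Q_2}^* a\|_{H_{Q_2}}^2$.

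So the natural idea: take $H = H_{Q_2}$ with an orthonormal basis $(e_k)$. I want $M_2(t)a = \sum_k \langle Fe_k, a\rangle m_k(t)$ where $F = i_{Q_2}$ (so $F \in L(H, U)$). Then $\langle Fe_k, a\rangle = \langle i_{Q_2} e_k, a\rangle = [e_k, i_{Q_2}^* a]_{H_{Q_2}}$.

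**Defining the component processes.**

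The one-dimensional Lévy processes should be $m_k(t)$ = the "projection of $M_2(t)$ onto $e_k$". Specifically, I'd want to define $m_k(t)$ so that $E[m_k(t) m_j(t)] = t\delta_{kj}$ (uncorrelated, to match the covariance).

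The map $a \mapsto i_{Q_2}^* a \in H_{Q_2}$ has dense range (since $Q_2(U^*)$ is dense). For each $a$, $M_2(t)a \in L^2(\Omega)$ with $E[M_2(t)a \cdot M_2(t)b] = t\langle Q_2 a, b\rangle = t[i_{Q_2}^*a, i_{Q_2}^*b]_{H_{Q_2}}$.

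So there's an isometry $J: H_{Q_2} \to L^2(\Omega)$ (after scaling by $\sqrt{t}$, or working at $t=1$) sending $i_{Q_2}^* a \mapsto M_2(1)a$. This is well-defined and isometric precisely by formula $E|M_2(1)a|^2 = \|i_{Q_2}^*a\|^2$, extending to all of $H_{Q_2}$ by density. Define $m_k(1) := J(e_k)$, i.e., the image of the basis vector. Then $E[m_k(1)m_j(1)] = [e_k,e_j] = \delta_{kj}$, giving the uncorrelatedness.

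**The main obstacles.**

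Two things need care. First, making $m_k$ a genuine Lévy *process* in $t$, not just a random variable: I'd define $m_k(t)$ via the analogous isometry $J_t$ at each time and verify joint consistency, or better, realize $m_k(t)$ directly as $\langle M_2(t), \cdot\rangle$-type integrals against the basis so that the increment/stationarity/independence-in-$t$ properties are inherited from $M_2$ acting on a fixed functional. Showing $(m_k(t))_t$ is a càdlàg Lévy process requires transporting the Lévy property through this construction — this is the delicate bookkeeping.

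Second, and this is the genuine hard part: proving the series converges in $L^2(\Omega)$ to $M_2(t)a$. I expect to write $M_2(t)a = J_t(i_{Q_2}^* a) = \sum_k [i_{Q_2}^*a, e_k]_{H_{Q_2}} \, J_t(e_k) = \sum_k \langle i_{Q_2} e_k, a\rangle m_k(t)$, where the middle equality is the Parseval expansion of $i_{Q_2}^*a$ in $H_{Q_2}$ pushed through the isometry $J_t$. The convergence is then automatic from isometry plus completeness of the orthonormal expansion, but I must confirm $J_t$ is well-defined, linear, and isometric onto its range, and that $F := i_{Q_2}$ indeed lies in $L(H_{Q_2}, U)$ (it does, as a bounded embedding). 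The whole argument hinges on the identity $E|M_2(1)a|^2 = \|i_{Q_2}^*a\|_{H_{Q_2}}^2$ converting the probabilistic $L^2$ structure into the Hilbert geometry of the RKHS.
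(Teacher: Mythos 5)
Your skeleton matches the paper's proof: $H=H_{Q_2}$, $F=i_{Q_2}$, the identity $E\abs{M_2(t)a}^2=t\norm{i_{Q_2}^\ast a}^2_{H_{Q_2}}$ converting $L^2(\Omega)$-convergence into a Parseval expansion in $H_{Q_2}$, and uncorrelatedness read off from the covariance. But you leave your first ``main obstacle'' --- that each $m_k$ is a genuine \cadlag~L\'evy process --- unresolved, and this is a genuine gap: the isometric-extension route does not deliver it for free. If $e_k$ is merely an $H_{Q_2}$-limit of elements $i_{Q_2}^\ast a^{(j)}$, then $m_k(t):=J_t(e_k)$ is defined only as an $L^2$-limit separately for each $t$, and you would still owe joint consistency across $t$, stationarity and independence of the increments of the limit process, stochastic continuity, and the existence of a \cadlag~modification. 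None of that is routine bookkeeping in your set-up; it is precisely the content the theorem requires.

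The missing idea that removes the difficulty at a stroke, and is how the paper proceeds, is to choose the orthonormal basis \emph{inside} the range of $i_{Q_2}^\ast$: since $Q_2(U^\ast)$ is dense in $H_{Q_2}$ and $H_{Q_2}$ is separable, one may pick an orthonormal basis $(e_k)_{k\in\N}\subseteq \text{range}(i_{Q_2}^\ast)$ and functionals $a_k\in U^\ast$ with $i_{Q_2}^\ast a_k=e_k$. Then $m_k(t):=M_2(t)a_k$ is by construction a one-dimensional L\'evy process (\cadlag~by the standing assumption of Section 4), so no extension argument is needed, and the convergence you want follows from the \emph{linearity} of the cylindrical random variable $M_2(t)$: the partial-sum error is $M_2(t)\bigl(\sum_{k=1}^n\scapro{i_{Q_2}e_k}{a}a_k-a\bigr)$, whose second moment equals $t\,\bigl\lVert\sum_{k=1}^n[e_k,i_{Q_2}^\ast a]_{H_{Q_2}}e_k-i_{Q_2}^\ast a\bigr\rVert^2_{H_{Q_2}}\to 0$ --- exactly your Parseval computation, but with no operator $J_t$ left to justify. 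Finally, note that the theorem asserts the processes are uncorrelated, which concerns unequal times as well: one needs $E[m_k(s)m_l(t)]=s\,\delta_{kl}$ for $s\le t$, whereas you only check equal times; this follows by writing $m_l(t)=(m_l(t)-m_l(s))+m_l(s)$ and using the weakly independent, mean-zero increments, as the paper does.
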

\begin{proof}
 Let $Q_2:U^\ast\to U$ be the covariance
operator of $M_2(1)$ and $H=H_{Q_2}$ its reproducing kernel
Hilbert space with the inclusion mapping $i_{Q_2}:H\to U$ (see the
comments after Lemma \ref{le.strong}). Because the range of
$i_{Q_2}^\ast$ is dense in $H$ and $H$ is separable there exists
an orthonormal basis
$(e_k)_{k\in\N}\subseteq$range$(i_{Q_2}^\ast)$ of $H$. We choose
$a_k\in U^\ast$ such that $i_{Q_2}^\ast a_k=e_k$ for all $k\in\N$
and define $m_k(t):=M_2(t)a_k$. Then  by using the equation
\eqref{eq.Covandnorm} we obtain that
\begin{align*}
  E\abs{ \sum_{k=1}^n \scapro{i_{Q_2}e_k}{a}m_k(t) - M_2(t)a }^2
  &=E\abs{M_2(t)\left(\sum_{k=1}^n \scapro{i_{Q_2}e_k}{a} a_k -a\right)}^2\\
 &=t\norm{i_{Q_2}^\ast\left(\sum_{k=1}^n \scapro{i_{Q_2}e_k}{a}a_k -a\right)}^2_H\\
 &=t\norm{\sum_{k=1}^n [e_k,i_{Q_2}^\ast a]_{H}e_k -i_{Q_2}^\ast a }^2_H\\
 &\to 0 \qquad\text{for }n\to\infty.
\end{align*}
Thus, $M_2$ has the required representation and it remains to establish that
the L\'{e}vy processes $m_k:=(m_k(t):\,t\ge 0)$ are uncorrelated. For any $s\le
t$ and $k,l\in\N$ we have:
\begin{align*}
 E[m_k(s)m_l(t)]&=
 E[M_2(s)a_k M_2(t)a_l]\\
& = E[M_2(s)a_k (M_2(t)a_l- M_2(s)a_l)] + E[M_2(s)a_k M_2(s)a_l]. \intertext{The first
term is zero by Lemma \ref{le.weaklyind} and for the second term we obtain}
 E[M_2(s)a_k M_2(s)a_l]
 &=s\scapro{Q_2a_k}{a_l}
 = s [i_{Q_2}^\ast a_k,i_{Q_2}^\ast a_l]_{H}
 =s  [e_k,e_l]_{H} =s \delta_{k,l}.
\end{align*}
Hence, $m_k(s)$ and $m_l(t)$ are uncorrelated.
\end{proof}

\begin{remark}\label{re.choicemk}
  The proof of Theorem \ref{th.cylsum} shows that the real valued L\'{e}vy
  processes $m_k$ can be chosen as
\begin{align*}
  m_k(t)=\int_{\R\setminus\{0\}} \beta \,\tilde{N}_{a_k}(t,d\beta)\qquad\text{for
  all }t\ge 0,
\end{align*}
where $\tilde{N}_{a_k}$ is the compensated Poisson random measure. Because of the choice
of $a_k$ the relation \eqref{eq.Covandnorm} yields that
\begin{align}\label{eq.M2=1}
 E\abs{m_k(t)}^2
  = t E\abs{M_2(1)a_k}^2
  = t \norm{i^\ast_{Q_2} a_k}_{H_{Q_2}}^2
  = t \norm{e_k}_{H_{Q_2}}^2
  = t
\end{align}
for all $k\in\N$ implying that
\begin{align}\label{eq.int=1}
  \int_{\R\setminus\{0\}} \beta^2 (\nu\circ a_k^{-1})(d\beta)=1.
\end{align}
\end{remark}

  An interesting question is the reverse implication of Theorem
  \ref{th.cylsum}. Under which condition on a family $(m_k)_{k\in
  \N}$ of real valued L\'{e}vy processes can we construct a cylindrical
  L\'{e}vy process via the sum \eqref{eq.cylsum}?

\begin{remark}\label{re.WandMseries}
Let $(L(t):\,t\ge 0)$ be a strongly cylindrical L\'{e}vy process
with decomposition $L(t)=W(t)+M_2(t)$. By Remark \ref{re.WandMcov}
the covariance operator $Q$ of $L$ can be factorised through a
Hilbert space $H_Q$ and so Theorem \ref{th.cylsum} can be
generalised as follows. There exist an orthonormal basis
$(e_k)_{k\in\N}$ of $H_Q$, $F\in L(H_Q,U)$ and uncorrelated real
valued L\'{e}vy processes $(m_k)_{k\in\N}$ such that
\begin{align*}
 L(t)a=\sum_{k=1}^\infty \scapro{Fe_k}{a} m_k(t) \qquad
    \text{in }L^2(\Omega,\F,P)\text{ for all $a\in U^\ast$}.
\end{align*}
As the stochastic processes $m_k$ can be choosen as
$m_k(t)=L(t)a_k$ for some $a_k\in U^\ast$ it follows that for all
$t \geq 0, k \in \mathbb{N}$
\begin{align*}
  m_k(t)=W(t)a_k + \int_{\R\setminus\{0\}}\beta\,\tilde{N}_{a_k}(t,d\beta).
\end{align*}
\end{remark}

\subsection{Integration}

In this section we introduce a cylindrical integral with respect
to the cylindrical process $M_2=(M_2(t):\,t\ge 0)$ in $U$. Because
$M_2$ has weakly independent increments and is of weak order 2 we
can closely follow the analysis for a cylindrical Wiener process
as was considered in \cite{riedle}. The integrand is a stochastic
process with values in $L(U,V)$, the set of bounded linear
operators from $U$ to $V$, where $V$ denotes a separable Banach
space. For that purpose we assume for $M_2$ the representation
according to Theorem \ref{th.cylsum}:
\begin{align*}
 M_2(t)a=\sum_{k=1}^\infty \scapro{i_{Q_2}e_k}{a} m_k(t) \qquad
    \text{in }L^2(\Omega,\F,P)\text{ for all $a\in U^\ast$},
\end{align*}
where $H_{Q_2}$ is the reproducing kernel Hilbert space of the covariance
operator $Q_2$ with the inclusion mapping $i_{Q_2}:H_{Q_2}\to U$ and an
orthonormal basis $(e_k)_{k\in\N}$ of $H_{Q_2}$. The real valued L\'{e}vy
processes $(m_k(t):\,t\ge 0)$ are defined by $m_k(t)=M_2(t)a_k$ for some
$a_k\in U^\ast$ with $i_Q^\ast a_k=e_k$, see Remark \ref{re.choicemk}.
\begin{definition}\label{de.integrablefunc}
  The set $C(U,V)$ contains all random variables $\Phi:[0,T]\times \Omega\to L(U,V)$ such that:
\begin{enumerate}
  \item[{\rm (a)}] $(t,\omega)\mapsto \Phi^\ast(t,\omega)f$ is $\Borel[0,T]\otimes \F$ measurable for all
             $f\in V^\ast$;
\item [{\rm (b)}] $(t,\omega)\mapsto \scapro{\Phi(t,\omega)u}{f}$ is
predictable for all $u\in U$ and $f\in V^\ast$.
  \item[{\rm (c)}] $\displaystyle \int_{0}^T E\norm{\Phi^\ast(s,\cdot)f}_{U^\ast}^2\, ds<\infty \;$ for all $f\in
  V^\ast$.

\end{enumerate}
\end{definition}

As usual we neglect  the dependence of  $\Phi\in C(U,V)$ on
$\omega$ and write $\Phi(s)$ for $\Phi(s,\cdot)$ as well as for
the dual process $\Phi^\ast(s):=\Phi^\ast(s,\cdot)$ where
$\Phi^\ast(s,\omega) \in L(V,U)$ denotes the dual (or adjoint)
operator of $\Phi(s,\omega)\in L(U,V)$.

We define the candidate for a stochastic integral:
\begin{definition}\label{de.I_t}
For $\Phi\in C(U,V)$ we define
\begin{align*}
 I_t(\Phi)f:= \sum_{k=1}^\infty \int_{0}^t \scapro{\Phi(s)i_{Q_2} e_k}{f}\, m_k(ds)
 \qquad \text{in }L^2(\Omega,\F,P)
\end{align*}
for all $f\in V^\ast$ and $t \in [0,T]$.
\end{definition}

For a predictable mapping $h:[0,t]\times\R\times\Omega\to\R$ the stochastic
integral $\int_{[0,t]\times\R\setminus\{0\}}
h(s,\beta)\,\tilde{N}_a(ds,d\beta)$ exists if
\begin{align*}
  \int_{[0,t]\times\R\setminus\{0\}}E\left[(h(s,\beta))^2\right]\,\nu_a(d\beta)\,ds<\infty,
\end{align*}
see for example Chapter 4 in \cite{Dave04}. Thus, the stochastic integral
\begin{align*}
  \int_{0}^t \scapro{\Phi(s)i_{Q_2} e_k}{f}\, m_k(ds)
  =\int_{[0,t]\times \R\setminus\{0\}} \scapro{\Phi(s)i_{Q_2} e_k}{f} \,\beta \,\tilde{N}_{a_k}(ds,d\beta)
\end{align*}
exists because property (c) in Definition \ref{de.integrablefunc} together with
\eqref{eq.int=1} implies
\begin{align*}
&\int_{[0,t]\times\R\setminus\{0\}} E\left[\big(\scapro{\Phi(s)i_{Q_2} e_k}{f}
\,\beta\big)^2\right]\,(\nu\circ a_k^{-1})(d\beta)\,ds \\
&\qquad\qquad = \int_{[0,t]}E\left[\big(\scapro{i_{Q_2} e_k}{\Phi^\ast(s)
f}\big)^2\right]\,ds
\int_{\R\setminus\{0\}}\beta^2(\nu\circ a_k^{-1})\,(d\beta)\\
&\qquad\qquad\le \norm{i_{Q_2} e_k}^2\int_0^t E\norm{\Phi^\ast(s)f}^2\,ds
 <\infty.
\end{align*}
Before we establish that the sum of these integrals in Definition \ref{de.I_t}
converges we derive a simple generalisation of It\^o's isometry for stochastic
integrals with respect to compensated Poisson random measures.
\begin{lemma}\label{le.crossexpectation}
  Let $(h_i(t):\,t\in [0,T])$ for $i=1,2$ be two predictable real valued processes
  with
\begin{align*}
 \int_0^T  E\abs{h_i(s)}^2\,ds<\infty
\end{align*}
and let $m_1:=(M_2(t)a:\,t\in [0,T])$ and $m_2:=(M_2(t)b:\,t\in [0,T])$ for
$a,b\in U^\ast$. Then we have
\begin{align*}
  E\left[\left(\int_0^T h_1(s)\,m_1(ds)\right)\left(\int_0^T
  h_2(s)\,m_2(ds)\right)\right]
  =\Cov(m_1(1),m_2(1))\,E\left[\int_0^T h_1(s)h_2(s)\,ds\right].
\end{align*}
\end{lemma}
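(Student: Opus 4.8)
The plan is to prove a polarized, cross-process version of the Itô isometry for compensated Poisson integrals. Both $m_1$ and $m_2$ arise from the same cylindrical process $M_2$, so I would first express each as an integral against compensated Poisson random measures coming from a common underlying structure. The key point is that $m_1(ds)=\int_{\R\setminus\{0\}}\beta\,\tilde N_a(ds,d\beta)$ and $m_2(ds)=\int_{\R\setminus\{0\}}\beta\,\tilde N_b(ds,d\beta)$, and these two compensated Poisson random measures are not independent; they are driven by the jumps of the single \cadlag\ process $L$ projected via $a$ and $b$. The standard Itô isometry handles the diagonal case $a=b$, so the real content here is controlling the cross term.

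\medskip

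First I would reduce to simple (elementary predictable) integrands by a density argument: step processes of the form $h_i(s)=\sum_j \xi^{(i)}_j\,\1_{(t_j,t_{j+1}]}(s)$ with $\xi^{(i)}_j$ bounded and $\F_{t_j}$-measurable are dense in $L^2([0,T]\times\Omega)$, and both sides of the claimed identity are continuous in $h_1,h_2$ with respect to the $L^2$ norm (the left side by Cauchy--Schwarz together with the diagonal isometry, the right side obviously). So it suffices to verify the identity when $h_1,h_2$ are simple. For simple integrands the stochastic integrals become finite sums $\sum_j \xi^{(i)}_j\,\big(m_i(t_{j+1})-m_i(t_j)\big)$, and I can compute the expectation of the product term by term.

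\medskip

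For a single time interval $(t_j,t_{j+1}]$ the computation splits according to whether the two increment indices agree. Using the weakly independent increments property (Lemma \ref{le.weaklyind}) together with the $\F_{t_j}$-measurability of $\xi^{(i)}_j$, all off-diagonal terms $j\neq j'$ vanish after conditioning on $\F_{t_{j\vee j'}}$, because the later centred increment has mean zero and is independent of everything up to its left endpoint. For the diagonal terms $j=j'$ I would pull out $\xi^{(1)}_j\xi^{(2)}_j$ by conditioning on $\F_{t_j}$ and then compute
\begin{align*}
E\big[(m_1(t_{j+1})-m_1(t_j))(m_2(t_{j+1})-m_2(t_j))\big]
 = (t_{j+1}-t_j)\,\Cov(m_1(1),m_2(1)).
\end{align*}
This last equality is exactly the covariance scaling established in Lemma \ref{le.Qt}, namely that the covariance of increments is linear in the length of the time interval, applied to the pair $(M_2(\cdot)a,M_2(\cdot)b)$. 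Summing over $j$ and taking expectations recovers $\Cov(m_1(1),m_2(1))\,E\big[\int_0^T h_1(s)h_2(s)\,ds\big]$, as desired.

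\medskip

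I expect the main obstacle to be the cross term: since $\tilde N_a$ and $\tilde N_b$ are driven by the same jumps of $L$, one cannot invoke independence of the two compensated measures, and a naive product-measure Itô isometry does not apply directly. The clean way around this is to avoid working with the two-parameter Poisson measures at all and instead treat $m_1,m_2$ abstractly as square-integrable \cadlag\ L\'evy-type martingales (after centring) whose increments are stationary and, across the pair, have covariance linear in time by Lemma \ref{le.Qt}. The diagonal-plus-vanishing-off-diagonal bookkeeping for simple integrands then yields the identity without ever needing an explicit joint Lévy measure for the pair $(a,b)$, and the density argument extends it to all admissible $h_1,h_2$.
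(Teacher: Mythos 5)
Your proposal is correct and follows essentially the same route as the paper: verify the identity for simple predictable integrands using weakly independent increments (the off-diagonal terms vanish since the centred later increment has mean zero) together with the time-linearity of the cross covariance from Lemma \ref{le.Qt}, and then extend by $L^2$-density. The only cosmetic difference is in the limit passage --- you invoke $L^2$-continuity of both sides via Cauchy--Schwarz and the diagonal isometry, whereas the paper extracts almost surely convergent subsequences and applies dominated convergence --- but this does not change the substance of the argument.
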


\begin{proof}
  Let $g_i$, $i=1,2$,  be  simple processes of the form
\begin{align}\label{eq.simplelemma}
  g_i(s)=\xi_{i,0}\1_{\{0\}}(s)+\sum_{k=1}^{n-1} \xi_{i,k}\1_{(t_k,t_{k+1}]}(s)
\end{align}
for $0=t_1\le t_2\le \dots \le t_n=T$ and a sequence of random variables
$\{\xi_{i,k}\}_{k=0,\dots, n-1}$ such that $\xi_{i,k}$ is $\F_{t_k}$-measurable
and $\sup_{k=0,\dots, n-1}\abs{\xi_{i,k}}<C$ $P$-a.s. We obtain
\begin{align*}
 E\left[\left(\int_0^T g_1(s)\,m_1(ds)\right)\left(\int_0^T
  g_2(s)\,m_2(ds)\right)\right]
& = Cov(m_1(1),m_2(1))\sum_{k=1}^{n-1} E[ \xi_{1,k}\xi_{2,k}] (t_{k+1}-t_k)\\
& = Cov(m_1(1),m_2(1)) E\left[\int_0^T g_1(s)g_2(s)\,ds \right] .
\end{align*}
For the processes $h_i$ there exist simple processes $(g_i^{(n)})$ of the form
\eqref{eq.simplelemma} such that
\begin{align}\label{eq.approxsimple}
  E\left[\int_0^T (g_i^{(n)}(s)-h_i(s))^2 \,ds \right]\to 0 \qquad\text{for }n\to\infty.
\end{align}
It\^o's isometry implies that there exists a subsequence $(n_k)_{k\in\N}$ such
that
\begin{align*}
  \int_0^T g_i^{(n_k)}(s)\,m_i(ds)\to \int_0^T h_i(s)\,m_i(ds) \qquad\text{ $P$-a.s. for $k\to\infty$}
\end{align*}
for $i=1,2$. By applying Lebesgue's dominated convergence theorem we obtain
\begin{align*}
   E\left[\left(\int_0^T g_1^{(n_k)}(s)\,m_1(ds)\right)\left(\int_0^T
  g_2^{(n_k)}(s)\,m_2(ds)\right)\right]\to
  E\left[\left(\int_0^T h_1(s)\,m_1(ds)\right)\left(\int_0^T
  h_2(s)\,m_2(ds)\right)\right].
\end{align*}
On the other hand, \eqref{eq.approxsimple} implies that there exists a
subsequence $(n_k)_{k\in\N}$ such that
\begin{align*}
  E[g_i^{(n_k)}(s)-h_i(s)]\to 0 \qquad\text{Lebesgue almost everywhere for $k\to\infty$.}
\end{align*}
Lebesgue's dominated convergence theorem again implies that
\begin{align*}
\int_0^T  E\left[ g_1^{(n_k)}(s)g_2^{(n_k)}(s)\right]\,ds \to \int_0^T E\left[
h_1(s)h_2(s)\right]\,ds\qquad\text{for }k\to\infty,
\end{align*}
which completes the proof.

\end{proof}

\begin{lemma}\label{le.cylintwell}
$I_t(\Phi):V^\ast \to L^2(\Omega,\F,P)$ is a well-defined
cylindrical random variable in $V$ which is independent of the
representation of $L$, i.e. of $(e_n)_{n\in\N}$ and
$(a_n)_{n\in\N}$.
\end{lemma}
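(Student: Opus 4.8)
The plan is to verify three things in turn: (i) that the series defining $I_t(\Phi)f$ converges in $L^2(\Omega,\F,P)$ for each $f\in V^\ast$, (ii) that $f\mapsto I_t(\Phi)f$ is linear, and (iii) that the resulting random variable is unchanged if the orthonormal basis $(e_k)$ and the functionals $(a_k)$ are replaced by another admissible pair $(\tilde e_l,\tilde a_l)$. The two tools I would use throughout are Lemma~\ref{le.crossexpectation} (the cross-isometry, which annihilates off-diagonal terms because the $m_k$ are uncorrelated with $\Cov(m_k(1),m_l(1))=\delta_{k,l}$) and the reproducing-kernel identity $\scapro{i_{Q_2}e_k}{\Phi^\ast(s)f}=\scaproh{e_k}{i_{Q_2}^\ast\Phi^\ast(s)f}$. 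The latter recasts each scalar integrand as a Fourier coefficient of the $H_{Q_2}$-valued process $\psi_f(s):=i_{Q_2}^\ast\Phi^\ast(s)f$, so that Parseval's identity in $H_{Q_2}$ becomes available.

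For convergence, fix $f$ and set $S_n:=\sum_{k=1}^n\int_0^t\scapro{\Phi(s)i_{Q_2}e_k}{f}\,m_k(ds)$; each summand is a legitimate scalar integral by property (b) of Definition~\ref{de.integrablefunc} and the estimate established just before the lemma. Applying Lemma~\ref{le.crossexpectation} to any pair of summands, the cross terms vanish and
\begin{align*}
  \E{\abs{S_n-S_m}^2}=\sum_{k=m+1}^n\int_0^t \EE\,\scaproh{e_k}{\psi_f(s)}^2\,ds .
\end{align*}
By Tonelli and Parseval the whole series equals $\int_0^t\EE\norm{\psi_f(s)}_{H_{Q_2}}^2\,ds$, and since $i_{Q_2}$ is the continuous inclusion of $H_{Q_2}$ into $U$ (so $\norm{i_{Q_2}^\ast}=\norm{i_{Q_2}}$), this is at most $\norm{i_{Q_2}}^2\int_0^t\EE\norm{\Phi^\ast(s)f}_{U^\ast}^2\,ds<\infty$ by property (c). Hence $(S_n)$ is Cauchy and $I_t(\Phi)f$ exists. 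Linearity in $f$ then follows at once from the bilinearity of the dual pairing, the linearity of the scalar integral, and the linearity of the $L^2$-limit, so that $I_t(\Phi)$ is a cylindrical random variable in $V$.

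For independence of the representation, let $\tilde m_l(t):=M_2(t)\tilde a_l$ and let $\tilde S_n,\tilde I_t(\Phi)f$ be the partial sums and limit attached to the second choice. Writing $\tilde e_l=\sum_k u_{lk}e_k$ with $(u_{lk})$ an orthogonal matrix, the Example following Lemma~\ref{le.strong} gives $\Cov(m_k(1),\tilde m_l(1))=\scapro{Q_2a_k}{\tilde a_l}=\scaproh{e_k}{\tilde e_l}=u_{lk}$. The strategy is to show $\E{I_t(\Phi)f\cdot\tilde I_t(\Phi)f}=\E{\abs{I_t(\Phi)f}^2}=\E{\abs{\tilde I_t(\Phi)f}^2}$, whence $\E{\abs{I_t(\Phi)f-\tilde I_t(\Phi)f}^2}=0$. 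To evaluate the cross term I would fix $n$, use $L^2$-continuity to write $\E{I_t(\Phi)f\cdot\tilde S_n}=\lim_N\E{S_N\tilde S_n}$, apply Lemma~\ref{le.crossexpectation} termwise, sum over $k$ first using $\sum_k u_{lk}\scaproh{e_k}{\psi_f(s)}=\scaproh{\tilde e_l}{\psi_f(s)}$, and obtain $\E{I_t(\Phi)f\cdot\tilde S_n}=\sum_{l=1}^n\int_0^t\EE\,\scaproh{\tilde e_l}{\psi_f(s)}^2\,ds$. Letting $n\to\infty$ and using Parseval for the basis $(\tilde e_l)$ yields $\int_0^t\EE\norm{\psi_f(s)}_{H_{Q_2}}^2\,ds$, which is exactly $\E{\abs{I_t(\Phi)f}^2}$, as required.

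The step I expect to be the main obstacle is precisely the order of summation in this last computation. The double series $\sum_{k,l}u_{lk}\,\EE\!\int_0^t\scaproh{e_k}{\psi_f(s)}\scaproh{\tilde e_l}{\psi_f(s)}\,ds$ need not converge absolutely, since one controls only $\sum_k u_{lk}^2=1$ and the square-summability of the coefficients, not $\sum_l(\cdots)$. The remedy is never to form the double sum directly: one sums over $k$ first (where Cauchy--Schwarz gives absolute convergence for each fixed $l$) and only afterwards over the finitely many indices $l\le n$ before passing to the limit, so that every interchange stays legitimate. The inner exchange of summation, expectation and time-integral is then justified by dominated convergence using the pointwise bound $\abs{\scaproh{\tilde e_l}{\psi_f(s)}}\le\norm{\psi_f(s)}_{H_{Q_2}}$ together with the integrability of $\norm{\psi_f}_{H_{Q_2}}^2$ already obtained in the convergence step.
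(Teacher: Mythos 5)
Your proposal is correct and follows essentially the same route as the paper: convergence of the partial sums via Lemma~\ref{le.crossexpectation} together with Parseval's identity in $H_{Q_2}$, and independence of the representation by computing the cross-expectation $E\bigl[\bigl(I_t(\Phi)f\bigr)\bigl(\tilde{I}_t(\Phi)f\bigr)\bigr]$ and concluding $E\abs{I_t(\Phi)f-\tilde{I}_t(\Phi)f}^2=0$. The only divergences are minor: the paper controls the tails with Doob's maximal inequality (obtaining uniformity in $t\le T$, which the lemma as stated does not require) where you show the partial sums are Cauchy at fixed $t$, and your one-index-at-a-time handling of the double sum rigorously justifies an interchange that the paper performs without comment.
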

\begin{proof}
We begin to establish the convergence in $L^2(\Omega,\F,P)$. For
that, let $m,n\in \N$ and we define for simplicity
$h(s):=i_{Q_2}^\ast\Phi^\ast(s)f$. Doob's maximal inequality and
Lemma \ref{le.crossexpectation} imply
 \begin{align*}
& E\abs{\sup_{0\le t\le T} \sum_{k=m+1}^{n}\int_{0}^t \scapro{\Phi(s)i_{Q_2} e_k}{f}\,m_k(ds)}^2\\
&\qquad \le 4\sum_{k=m+1}^{n}\left(\int_{\R\setminus\{0\}}\beta^2\,(\nu\circ a_k^{-1})(d\beta)\right)\int_{0}^T E\scaproh{e_k}{h(s)}^2\,ds\\
&\qquad \le 4 \sum_{k=m+1}^{\infty}\int_0^T E\scaproh{\scaproh{e_k}{h(s)}e_k}{h(s)}\,ds\\
&\qquad = 4 \sum_{k=m+1}^{\infty}\sum_{l=m+1}^\infty\int_0^T E\scaproh{\scaproh{e_k}{h(s)}e_k}{\scaproh{e_l}{h(s)}e_l}\,ds\\
&\qquad =4  \int_0^T E\norm{(\Id -p_m)h(s)}_{H_{Q_2}}^2\,ds,
 \end{align*}
where $p_m:H_{Q_2}\to H_{Q_2}$ denotes the projection onto the span of $\{e_1,\dots,
e_m\}$. Because $\norm{(\Id -p_m)h(s)}_{H_{Q_2}}^2\to 0$ $P$-a.s. for $m\to \infty$ and
\begin{align*}
  \int_0^T E\norm{(\Id -p_m)h(s)}_{H_{Q_2}}^2\,ds
  \le \norm{i_{Q_2}^\ast}^2_{U^\ast\to H_{Q_2}} \int_0^T E\norm{\Phi^\ast(s,\cdot)f}^2_{U^\ast}\,ds<\infty
\end{align*}
we obtain by Lebesgue's dominated convergence  theorem the
convergence in $L^2(\Omega,\F,P)$.

Because the processes $\{m_k\}_{k\in\N}$ are uncorrelated Lemma
\ref{le.crossexpectation} enables us to derive an analogue of It\^o's isometry:
\begin{align}\label{eq.itoiso0}
E\abs{ \sum_{k=1}^{\infty}\int_{0}^t \scapro{\Phi(s)i_{Q_2} e_k}{f}\,m_k(ds)}^2 &=
\sum_{k=1}^{\infty} E\abs{\int_{0}^t \scapro{\Phi(s)i_{Q_2} e_k}{f}\,m_k(ds)}^2\notag\\
&= \sum_{k=1}^\infty E\abs{m_k(1)}^2  \int_0^t E\abs{\scapro{\Phi(s)i_{Q_2}
e_k}{f}}^2\,ds\notag\\
&= \sum_{k=1}^\infty  \int_0^t
E\left[\scaproh{e_k}{i_{Q_2}^\ast\Phi^\ast(s)f}^2\right]\,ds\notag\\
&=\int_0^t \norm{i_{Q_2}^\ast\Phi^\ast(s)f}^2_{H_{Q_2}}\,ds,
\end{align}
where we used \eqref{eq.Covandnorm} to obtain
\begin{align*}
  E\abs{m_k(1)}^2=\norm{i_{Q_2}^\ast a_k}^2=\norm{e_k}^2=1.
\end{align*}

To prove the independence of the given representation of $M_2$ let
$(d_l)_{l\in\N}$ be an other orthonormal basis of $H_{Q_2}$ and
$w_l \in U^\ast $ such that $i_{Q_2}^\ast w_l=d_l$ and
$(n_l(t):\,t\ge 0)$ L\'{e}vy processes defined by
$n_l(t)=M_2(t)w_l$. As before we define in $L^2(\Omega,\F,P)$:
\begin{align*}
  \tilde{I}_t(\Phi)f:=\sum_{l=1}^\infty \int_0^t \scapro{\Phi(s)i_{Q_2} d_l}{f}\,n_l(ds)
  \qquad\text{for all }f\in V^\ast.
\end{align*}
Lemma \ref{le.crossexpectation} enables us to compute the covariance:
\begin{align*}
 &E\left[ \big(I_t(\Phi)f\big)\big( \tilde{I}_t(\Phi)f\big)\right]\\
&\qquad =\sum_{k=1}^\infty \sum_{l=1}^\infty
   E\left[\left(\int_0^t\scapro{\Phi(s)i_{Q_2} e_k}{f}\, m_k(ds)\right)
    \left(\int_0^t   \scapro{\Phi(s)i_{Q_2} d_l}{f}\, n_l(ds)\right)\right] \\
&\qquad =\sum_{k=1}^\infty \sum_{l=1}^\infty \Cov(m_k(1),n_l(1))E\left[\int_0^t
\scapro{\Phi(s)i_{Q_2} e_k}{f}
  \scapro{\Phi(s)i_{Q_2} d_l}{f} \,ds \right]\\
&\qquad = \int_0^t E\left[  \sum_{k=1}^\infty \sum_{l=1}^\infty
\scaproh{e_k}{d_l}\scaproh{e_k}{i_{Q_2}^\ast\Phi^\ast (s)f}
  \scaproh{d_l}{i_{Q_2}^\ast \Phi^\ast(s)f}\,ds  \right]\\
&\qquad =\int_0^t E\norm{i_{Q_2}^\ast\Phi^\ast (s)f}_{H_{Q_2}}^2\,ds.
\end{align*}
By using It\^o's isometry \eqref{eq.itoiso0} we obtain
\begin{align*}
& E\left[\abs{I_t(\Phi)f-\tilde{I}_t(\Phi)f}^2\right]\\
&\qquad= E\Big[\abs{I_t(\Phi)f}^2\Big]+ E\Big[\abs{\tilde{I}_t(\Phi)f}^2\Big]
  -2 E\Big[ \big(I_t(\Phi)f\big)\big( \tilde{I}_t(\Phi)f\big)\Big]\\
&\qquad =0,
\end{align*}
which proves the independence of $I_t(\Phi)$ on $(e_k)_{k\in\N}$ and $(a_k)_{k\in\N}$.
The linearity of $I_t(\Phi)$ is obvious and hence the proof is complete.
\end{proof}

Our next definition is not very surprising:
\begin{definition}
  For $\Phi\in C(U,V)$ we call the cylindrical random variable
  \begin{align*}
    \int_0^t \Phi(s)\, dM_2(s):=I_t(\Phi)
  \end{align*}
a {\em cylindrical stochastic integral with respect to $M_2$}.
\end{definition}

In the proof of Lemma \ref{le.cylintwell} we already derived  It\^o's isometry:
\begin{align*}
  E\abs{\left(\int_0^t \Phi(s)\,dM_2(s)\right)f}^2= \int_0^t
  E\norm{i_{Q_2}^\ast\Phi^\ast(s)f}^2_{H_{Q_2}}\,ds
\end{align*}
for all $f\in V^\ast$.

\begin{remark}\label{re.WandMint}
  If a strongly cylindrical L\'{e}vy process $L$ is of the form $L(t)=W(t)+M_2(t)$ one can utilise the series
  representation in Remark \ref{re.WandMseries} to define
   a stochastic integral with respect to $L$ by the same approach as in this subsection.
   But on the other hand we can follow \cite{Dave} and define
   \begin{align*}
     \int \Phi(s)\,dL(s):=\int \Phi(s)\,dW(s)+ \int \Phi(s)\,dM_2(s),
   \end{align*}
  where the stochastic integral with respect to the cylindrical Wiener process $W$ is defined analogously, see
  \cite{riedle} for details. This approach allows even more flexibility because one can
  choose different integrands $\Phi_1$ and $\Phi_2$ for the two different integrals on
  the right hand side.
\end{remark}

\section{Cylindrical Ornstein-Uhlenbeck process}

Let $V$ be a separable Banach space and let $(M_2(t):\,t\ge 0)$ be a strongly
cylindrical L\'{e}vy process of the form \eqref{eq.M2} on a separable Banach
space $U$ with covariance operator ${Q_2}$ and cylindrical L\'{e}vy measure
$\nu$. We consider the Cauchy problem
\begin{align}\label{eq.cauchy}
\begin{split}
  dY(t)&=AY(t)\,dt + C\,dM_2(t)\qquad\text{for all }t\ge 0,\\
   Y(0)&=Y_0,
\end{split}
\end{align}
where $A:\text{dom}(A)\subseteq V\to V$ is the infinitesimal
generator of a strongly continuous semigroup $(S(t))_{t\ge 0}$ on
$V$ and $C:U\to V$ is a linear, bounded operator. The initial
condition is given by a cylindrical random variable $Y_0:V^\ast\to
L^0(\Omega, \F,P)$. In addition, we assume that $Y_0$ is
continuous when $L^0(\Omega, \F,P)$ is equipped with the topology
of convergence in probability.

\begin{remark}
In this section we focus on the random noise $M_2$ for simplicity.
But because of Remark \ref{re.WandMint} our results in this
section on the Cauchy problem \eqref{eq.cauchy} can easily be
generalised to the Cauchy problem of the form
  \begin{align*}
    dY(t)=AY(t)\,dt + C_1\, dW(t)+ C_2\,dM_2(t),
  \end{align*}
where $(W(t):\,t\ge 0)$ is a strongly cylindrical Wiener process.
\end{remark}

To find an appropriate meaning of a solution of \eqref{eq.cauchy} let
$T:\dom(T)\subseteq U \to V$ be a closed densely defined linear operator acting
with dual operator $T^\ast: \dom(T^\ast)\subseteq V^\ast \to U^\ast$. If $X$ is
a cylindrical random variable in $U$ then we obtain a linear map $TX$ with
domain $\dom(T^\ast)$  by the prescription
\begin{align*}
  TX:\dom(T^\ast)\subseteq V^\ast\to L^0(\Omega,\F,P),\qquad
   (TX)a:=X(T^\ast a).
\end{align*}
If $\dom(T^\ast)=V^\ast$ then $TX$ defines a new cylindrical random variable in $V$. If
$\mu_X$ denotes the cylindrical distribution of $X$ then the cylindrical distribution
$\mu_{TX}$ of $TX$ is given by
\begin{align*}
  \mu_{TX}(Z(a_1,\dots, a_n;B))=\mu_X(Z(T^\ast a_1,\dots, T^\ast a_n;B)),
\end{align*}
for all $a_1,\dots, a_n\in V^\ast$, $B\in \Borel(\R^n)$ and $n\in\N$. By applying this
definition the operator $C$ appearing in the Cauchy problem \eqref{eq.cauchy} defines a
new cylindrical process $CM_2:=(CM_2(t):\,t\ge 0)$ in $V$ by
\begin{align*}
  CM_2(t)a=M_2(t)(C^\ast a) \qquad\text{for all }a \in V^\ast.
\end{align*}
The cylindrical process $CM_2$ is a cylindrical L\'{e}vy process in $V$ with
covariance operator $CQ_2C^\ast$ and cylindrical L\'{e}vy measure $\nu_{CM_2}$
given by
\begin{align*}
  \nu_{CM_2}(Z(a_1,\dots, a_n;B))=\nu_{M_2}(Z(C^\ast a_1,\dots,C^\ast a_n;B)).
\end{align*}
\begin{definition}\label{de.sol}
An adapted, cylindrical process $(Y(t):\,t\ge 0)$ in $V$ is called a {\em weak
cylindrical solution of \eqref{eq.cauchy}} if
\begin{align*}
  Y(t)a=Y_0a +\int_0^t AY(s)a \,ds + (CM_2(t))a \qquad\text{for all }a\in\dom(A^\ast).
\end{align*}
\end{definition}

Definition \ref{de.sol} extends the concept of a solution of stochastic Cauchy
problems on a Hilbert space or a Banach space driven by a L\'{e}vy process to
the cylindrical situation, see \cite{DaPratoZab} for the case of a Hilbert
space and \cite{OnnoMarkus} for the case of a Banach space. The following
example illustrates this generalisation.
\begin{example}\label{ex.solind}
  Let $\tilde{N}$ be a compensated Poisson random measure in $U$. Then a weak  solution of
  \begin{align}\label{eq.cauchyradon}
  \begin{split}
    dZ(t)&=AZ(t)\,dt + \int_{0<\norm{u}} C\, d\tilde{N}(dt,du)\qquad \text{for all }t\ge 0,\\
     Z(0)&=Z_0
    \end{split}
  \end{align}
is a stochastic process $Z=(Z(t):\,t\ge 0)$ in $V$ such that P-a.s.
\begin{align}\label{eq.RadonCauchy}
  \scapro{Z(t)}{a}=
   \scapro{Z(0)}{a}+\int_0^t \scapro{Z(s)}{A^\ast a}\,ds + \int_{[0,t]\times U}\scapro{C(u)}{a}
   \, \tilde{N}(ds,du)
\end{align}
for all $a\in \dom(A^\ast)$ and $t\ge 0$. These kinds of equations
in Hilbert spaces are considered in \cite{Dave} and \cite{PesZab}
and in Banach spaces  in \cite{OnnoMarkus}.

If we define a cylindrical L\'{e}vy process $(M_2(t):\,t\ge 0)$ by
\begin{align*}
  M_2(t)a:=\int_{U}\scapro{u}{a}\, \tilde{N}(t,du),
\end{align*}
then it follows that the induced cylindrical process $(Y(t):\,t\ge 0)$ with
$Y(t)a=\scapro{Z(t)}{a}$ where $Z$ is a weak solution of \eqref{eq.cauchyradon} is a weak
cylindrical solution of
  \begin{align*}
    dY(t)&=AY(t)\,dt + C\, dM_2(t), \\
     Y(0)&=Y_0
  \end{align*}
in the sense of Definition \ref{de.sol} with $Y_0a:=\scapro{Z_0}{a}$.
\end{example}

A Cauchy problem of the form \eqref{eq.cauchyradon} might not have a solution
in the traditional sense. But a cylindrical solution always exists:
\begin{theorem}\label{th.sol}
  For every Cauchy problem of the form \eqref{eq.cauchy} there exists a unique weak
 cylindrical solution $(Y(t):\,t\ge 0)$ which is given by
\begin{align*}
  Y(t)= S(t)Y_0 + \int_0^t S(t-s)C\, dM_2(s)\qquad\text{for all }t\ge 0.
\end{align*}
\end{theorem}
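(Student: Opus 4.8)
The plan is to verify that the variation-of-constants formula
\begin{align*}
  Y(t)a:=(S(t)Y_0)a+\left(\int_0^t S(t-s)C\,dM_2(s)\right)a,\qquad a\in V^\ast,
\end{align*}
defines a weak cylindrical solution, and then to establish uniqueness by a difference argument. First I would check that for each fixed $t$ the integrand $\Phi(s):=S(t-s)C$ lies in $C(U,V)$: conditions (a) and (b) of Definition \ref{de.integrablefunc} hold because $\Phi$ is deterministic and $s\mapsto C^\ast S^\ast(t-s)f$ is measurable, while (c) follows from the exponential bound $\norm{\Phi^\ast(s)f}_{U^\ast}\le\norm{C}\,Me^{\omega(t-s)}\norm{f}$ coming from the growth estimate of the semigroup. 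Hence $I_t(\Phi)$ is a well-defined cylindrical random variable in $V$ by Lemma \ref{le.cylintwell}, both summands above are meaningful, and $(S(t)Y_0)a=Y_0(S^\ast(t)a)$.

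For existence I would treat the two summands separately. For the semigroup part I would use that, for $a\in\dom(A^\ast)$,
\begin{align*}
  S^\ast(t)a=a+\int_0^t S^\ast(s)A^\ast a\,ds,
\end{align*}
which holds as a weak$^\ast$ integral since $\tfrac{d}{ds}\scapro{S(s)v}{a}=\scapro{S(s)v}{A^\ast a}$ for $v\in\dom(A)$; applying the linear map $Y_0$ and interchanging it with the time integral (using continuity of $Y_0$ in probability) gives $Y_0(S^\ast(t)a)=Y_0a+\int_0^t(S(s)Y_0)(A^\ast a)\,ds$. For the convolution part, writing $v_k:=Ci_{Q_2}e_k$ and invoking the series representation of $M_2$ from Theorem \ref{th.cylsum}, the key identity to establish term by term is
\begin{align*}
  \int_0^t\scapro{S(t-s)v_k}{a}\,m_k(ds)
  =\scapro{v_k}{a}\,m_k(t)+\int_0^t\left(\int_0^r\scapro{S(r-s)v_k}{A^\ast a}\,m_k(ds)\right)dr,
\end{align*}
which follows by inserting $\scapro{S(t-s)v_k}{a}=\scapro{v_k}{a}+\int_s^t\scapro{S(r-s)v_k}{A^\ast a}\,dr$ and applying the stochastic Fubini theorem for the deterministic-integrand integral against $m_k$ to swap $\int_0^t\int_s^t dr\,m_k(ds)$ into $\int_0^t\int_0^r m_k(ds)\,dr$. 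Summing over $k$, with the interchange of the $k$-sum and the $dr$-integral justified by the It\^o isometry and the bounds from the proof of Lemma \ref{le.cylintwell}, yields $\left(\int_0^t S(t-s)C\,dM_2(s)\right)a=(CM_2(t))a+\int_0^t\left(\int_0^r S(r-s)C\,dM_2(s)\right)(A^\ast a)\,dr$. Adding the semigroup and convolution contributions reproduces exactly the identity in Definition \ref{de.sol}, while adaptedness and linearity in $a$ are clear from the construction.

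For uniqueness I would let $X:=Y_1-Y_2$ be the difference of two weak solutions, so that $X(0)=0$ and $X(t)a=\int_0^t X(s)(A^\ast a)\,ds$ for all $a\in\dom(A^\ast)$. Fixing $t>0$ and $b\in\dom(A^\ast)$, I would consider $\Psi(s):=X(s)(S^\ast(t-s)b)$; using $\tfrac{d}{ds}X(s)c=X(s)(A^\ast c)$ from the homogeneous equation together with $\tfrac{d}{ds}S^\ast(t-s)b=-A^\ast S^\ast(t-s)b$, the two contributions to $\tfrac{d}{ds}\Psi(s)$ cancel, so $\Psi$ is constant and
\begin{align*}
  X(t)b=\Psi(t)=\Psi(0)=X(0)(S^\ast(t)b)=0.
\end{align*}
Thus any two weak cylindrical solutions agree on $\dom(A^\ast)$.

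The main obstacle I expect is the convolution step: making the termwise stochastic Fubini rigorous for the integrals against the $m_k$ and controlling the $k$-sum in $L^2(\Omega,\F,P)$ uniformly in the time variable. A secondary technical point is that the dual semigroup $s\mapsto S^\ast(s)A^\ast a$ need only be weak$^\ast$-continuous, so some care is required when commuting the (merely in-probability continuous) map $Y_0$ with the $V^\ast$-valued integral; the cleanest formulation keeps $a\in\dom(A^\ast)$ throughout.
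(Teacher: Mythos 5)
Your existence argument is essentially the paper's own proof. The paper likewise verifies only condition (c) of Definition \ref{de.integrablefunc}, using the exponential growth estimate \eqref{eq.expgrowth}; it treats the convolution term through the series representation of Theorem \ref{th.cylsum} and the stochastic Fubini theorem for Poisson integrals (Theorem 5 of \cite{Dave}), running the computation in the reverse direction to yours --- it computes $\int_0^t AX(r)a\,dr$ and swaps the $dr$ and $m_k(dv)$ integrals to arrive at $X(t)a - M_2(t)(C^\ast a)$ in \eqref{eq.proofOS}, whereas you insert the identity $\scapro{S(t-s)v_k}{a}=\scapro{v_k}{a}+\int_s^t\scapro{S(r-s)v_k}{A^\ast a}\,dr$ into the integrand; the Fubini swap and the $L^2$-control of the $k$-sum are the same. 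For the $S(t)Y_0$ part the paper cites Proposition 1.2.2 of \cite{Jan}, which gives $\int_0^t S^\ast(r)A^\ast a\,dr = S^\ast(t)a - a$ as a Bochner integral for $a\in\dom(A^\ast)$; this norm-integral form is exactly what resolves the weak$^\ast$ concern you raise, since it is what allows the in-probability continuous map $Y_0$ to be interchanged with the time integral.

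Where you genuinely diverge is uniqueness: the paper's proof in fact only verifies that the variation-of-constants formula defines a solution and contains no uniqueness argument at all, so your $\Psi(s):=X(s)(S^\ast(t-s)b)$ device is an addition rather than a reproduction. As sketched, however, it has a gap. Differentiating $\Psi$ requires passing norm-convergent increments of the functional argument through $X(s)$, i.e. $X(s)d_h\to X(s)d$ in probability when $d_h\to d$ in $V^\ast$; but Definition \ref{de.sol} imposes no continuity whatsoever on the cylindrical random variables $X(s):V^\ast\to L^0(\Omega,\F,P)$ (only the initial condition $Y_0$ is assumed continuous), so the cross term $X(s)\bigl(S^\ast(t-s-h)b-S^\ast(t-s)b\bigr)$ cannot simply be sent to its formal limit. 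Moreover $s\mapsto S^\ast(t-s)b$ is norm-differentiable only when the orbit of $A^\ast b$ is norm-continuous, so one should first take $b\in\dom((A^\ast)^2)$ and then extend; note too that the argument can at best pin $Y(t)$ down on $\dom(A^\ast)$, which is all the solution concept constrains. A route that avoids the differentiation entirely is to take pathwise Laplace transforms of the homogeneous identity $X(t)a=\int_0^t X(s)(A^\ast a)\,ds$: integration by parts gives $\hat{X}(\lambda)\bigl((\lambda-A^\ast)a\bigr)=0$, and since $(\lambda-A^\ast)\dom(A^\ast)=V^\ast$ for $\lambda$ in the resolvent set, injectivity of the Laplace transform along a sequence $\lambda_n\to\infty$ applied to the continuous paths $t\mapsto X(t)c$ yields $X\equiv 0$ on $\dom(A^\ast)$.
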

\begin{proof}
We define the stochastic convolution by the cylindrical random variable
\begin{align*}
  X(t):=\int_0^t S(t-v)C\, dM_2(v)\qquad\text{for all }t\ge 0.
\end{align*}
To ensure that the cylindrical stochastic integral exists  we need only to check that the
integrand satisfies the condition (c) in Definition \ref{de.integrablefunc} which follows
from
\begin{align*}
  \int_0^t \norm{S^\ast(t-v)a}_{V^\ast}^2\,dv
  =  \int_0^t \norm{S(v)a}_{V}^2\,dv
  <\infty,
\end{align*}
because of the exponential estimate of the growth of semigroups, i.e.
\begin{align}\label{eq.expgrowth}
  \norm{S(t)a}_V\le Ce^{\gamma t}\qquad \text{for all }t\ge 0,
\end{align}
where $C\in (0,\infty)$ and $\gamma\in\R$ are some constants. By using standard
properties of strongly continuous semigroups we calculate for $a\in V^\ast$
that
\begin{align}\label{eq.proofOS}
  \int_0^t AX(r)a\,dr
  &= \int_0^t X(r)(A^\ast a)\,dr \notag\\
  &= \int_0^t \left( \int_0^r S(r-v)C\,dM_2(v)\right) (A^\ast a)\,dr \notag\\
  &= \sum_{k=1}^\infty \int_0^t \int_0^r \scapro{S(r-v)Ci_{Q_2}e_k}{A^\ast
  a}\,m_k(dv)\,dr\notag\\
  &= \sum_{k=1}^\infty \int_0^t \int_r^t \scapro{S(r-v)Ci_{Q_2}e_k}{A^\ast a}\,dr\,
  m_k(dv)\notag\\
  &= \sum_{k=1}^\infty \int_0^t \scapro{Ci_{Q_2}e_k}{S^\ast
  (t-v)a-a}\,m_k(dv)\notag\\
  &= X(t)a - M_2(t)(C^\ast a),
\end{align}
where we have used the stochastic Fubini theorem for Poisson
stochastic integrals (see Theorem 5 in \cite{Dave}), the
application of which is justified by the estimate
\eqref{eq.expgrowth}. For convenience we define
\begin{align*}
  Z(t):=S(t)Y_0\qquad\text{for all }t\ge 0.
\end{align*}
Proposition 1.2.2 in \cite{Jan} guarantees that the adjoint
semigroup satisfies
\begin{align*}
  \int_0^t S^\ast (r)A^\ast a\,dr = S^\ast (t)a-a
 \qquad\text{for all }a\in \dom (A^\ast),
\end{align*} in the sense of Bochner integrals. Thus, we have
\begin{align*}
  \int_0^t AZ(r)a\,dr
   =\int_0^t Y_0(S^\ast (r)A^\ast a)\,dr
   =Y_0\int_0^t S^\ast (r)A^\ast a\, dr
   = Z(t)a-Y_0a.
\end{align*}
The assumption on the continuity of the initial condition $Y_0$ enables the change of the
integration and the application of the initial condition $Y_0$. Together with
\eqref{eq.proofOS} this completes our proof.
\end{proof}

The cylindrical process $(Y(t):\,t\ge 0)$ given in Theorem \ref{th.sol} is
called a {\em cylindrical Ornstein-Uhlenbeck process}.



For all $t\ge 0$, let $C_t(\Omega,V)$ be the linear space of all adapted cylindrical
random variables in $V$ which are $\F_t$-measurable. A family $\{Z_{s,t}:\,0\le s\le t\}$
of mappings
\begin{align*}
  Z_{s,t}:C_s(\Omega,V)\to C_t(\Omega,V)
\end{align*}
is called a {\em cylindrical flow} if $Z_{t,t}=\Id$ and for each $0\le r\le s\le t$
\begin{align*}
   Z_{r,t}=Z_{s,t}\circ Z_{r,s} \quad\text{$P$-a.s.}
\end{align*}
In relation to the cylindrical Ornstein-Uhlenbeck process in Theorem
\ref{th.sol} we define
\begin{align}\label{eq.flow}
   Z_{s,t}X:=S(t-s)X+\int_s^t S(t-r)C\, dM_2(r)
\qquad\text{for }X\in C_s(\Omega,V)
\end{align}
and for all $0\le s\le t$.

\begin{proposition} \hfill
  \begin{enumerate}
    \item[{\rm (a)}] The family $\{Z_{s,t}:\,0\le s\le t\}$ as given by \eqref{eq.flow}
    is a cylindrical flow.
    \item[{\rm (b)}] For all $a_1,\dots, a_n\in U^{\ast}$ the
    stochastic process $(Y(t)(a_1,\dots,a_n):\,t\ge 0)$ in $\R^n$ is a time-homogeneous Markov process.
  \end{enumerate}
\end{proposition}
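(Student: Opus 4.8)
**The plan is to prove the two assertions separately, relying on the structure of the stochastic integral and the Lévy property of the increments of $M_2$.**

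For part (a), the plan is to verify the two defining properties of a cylindrical flow directly. The identity $Z_{t,t}=\Id$ is immediate from \eqref{eq.flow}, since the integral over $[t,t]$ vanishes. For the cocycle property $Z_{r,t}=Z_{s,t}\circ Z_{r,s}$, I would fix $0\le r\le s\le t$ and $X\in C_r(\Omega,V)$, then compute $Z_{s,t}(Z_{r,s}X)$ by substituting the definition of $Z_{r,s}X$ into the formula for $Z_{s,t}$. The main ingredients are the semigroup property $S(t-s)S(s-r)=S(t-r)$ and the additivity of the stochastic integral over adjacent intervals, i.e.
\begin{align*}
  \int_r^s S(t-v)C\,dM_2(v)+\int_s^t S(t-v)C\,dM_2(v)=\int_r^t S(t-v)C\,dM_2(v).
\end{align*}
Applying $S(t-s)$ to the integral term of $Z_{r,s}X$ and using $S(t-s)S(s-v)=S(t-v)$ inside the integrand (which is justified by the boundedness of $S(t-s)$ and the continuity of the integral in $L^2$) produces exactly the first integral above, so the two expressions coincide in $L^2(\Omega,\F,P)$, hence $P$-a.s. as cylindrical random variables. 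The only subtlety is checking that all integrands lie in $C(U,V)$, which follows as in the proof of Theorem \ref{th.sol} from the exponential growth estimate \eqref{eq.expgrowth}.

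For part (b), the plan is to show that for fixed $a_1,\dots,a_n\in U^\ast$ the $\R^n$-valued process $(Y(t)(a_1,\dots,a_n):\,t\ge 0)$ is Markov, time-homogeneous, by exhibiting its transition mechanism. Writing $Y(t)=Z_{0,t}Y_0$ and using the flow decomposition from part (a), for $s\le t$ I would express
\begin{align*}
  Y(t)a_j=\big(S(t-s)Y(s)\big)a_j+\Big(\int_s^t S(t-v)C\,dM_2(v)\Big)a_j
        =Y(s)(S^\ast(t-s)a_j)+\eta_{s,t}a_j,
\end{align*}
where $\eta_{s,t}:=\int_s^t S(t-v)C\,dM_2(v)$. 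The key structural fact is that, because $M_2$ has weakly independent and stationary increments (Definition \ref{de.cylLevy} together with Lemma \ref{le.weaklyind}), the finite-dimensional projection $(\eta_{s,t}a_1,\dots,\eta_{s,t}a_n)$ is independent of $\F_s$ and its law depends only on $t-s$; this follows from the series representation of the integral in Definition \ref{de.I_t} combined with stationarity and independence of the increments of the driving processes $m_k$. The linear image $(Y(s)(S^\ast(t-s)a_1),\dots)$ is $\F_s$-measurable, so conditionally on $\F_s$ the vector $(Y(t)a_j)_j$ is the sum of an $\F_s$-measurable term depending on $Y(s)$ and an independent noise term whose distribution depends only on $t-s$. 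This is precisely the Markov property together with time-homogeneity.

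\textbf{The main obstacle} is the bookkeeping in part (b): one must verify that the conditional law of $(Y(t)a_j)_j$ given $\F_s$ depends on the past only through the current state $(Y(s)a_j)_j$, not through the richer information in $\F_s$ or the values $Y(s)b$ for other $b\in U^\ast$. The delicate point is that $S^\ast(t-s)a_j$ need not lie in the span of $\{a_1,\dots,a_n\}$, so the relevant state is really the cylindrical random variable $Y(s)$ evaluated on the subspace generated by the $S^\ast(t-s)a_j$. I would address this by formulating the Markov property in the cylindrical sense—conditioning on the $\sigma$-algebra generated by the projection of $Y(s)$—and checking that the increment $\eta_{s,t}$, being built from the post-$s$ increments of the $m_k$ through an $L^2$-convergent series, is genuinely independent of $\F_s$ and stationary in $t-s$ by the weak independence and stationarity of the increments of $M_2$.
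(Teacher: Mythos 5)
Your proposal is correct and follows essentially the same route as the paper, with one noteworthy divergence at the final step of (b). For (a), the paper simply cites Proposition 4.1 of \cite{Dave}, and the argument there is exactly your computation: the semigroup identity $S(t-s)S(s-r)=S(t-r)$, commutation of $S(t-s)$ with the cylindrical integral (which at the level of the defining series is pure duality, $\scapro{S(s-v)Ci_{Q_2}e_k}{S^\ast(t-s)f}=\scapro{S(t-v)Ci_{Q_2}e_k}{f}$, so even your $L^2$-continuity appeal is more than is needed), and additivity of the one-dimensional integrals over adjacent intervals. For (b), the paper performs your identical decomposition, writing
\begin{align*}
  Y(t)\an = S(t-s)Z_{0,s}Y(0)\an + \left(\int_s^t S(t-u)C\,dM_2(u)\right)\an,
\end{align*}
observes that the convolution term is measurable with respect to $\sigma\left(\{M_2(v)a-M_2(u)a:\, s\le u\le v\le t,\ a\in V^\ast\}\right)$ and hence independent of $\F_s$, and then invokes ``standard arguments'' from Section 6.4.2 of \cite{Dave04} to conclude $E[f(Y(t)\an)\mid \F_s]=E[f(Y(t)\an)\mid Y(s)\an]$. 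The point you flag as the \emph{main obstacle} is precisely where the paper is thinnest: the standard conditioning lemma yields $E[f(Y(t)\an)\mid\F_s]$ as a measurable function of $Y(s)(S^\ast(t-s)\an)$, and since $S^\ast(t-s)a_j$ need not lie in the span of $a_1,\dots,a_n$, the passage to conditioning on $\sigma(Y(s)\an)$ is not automatic; the paper leaves this to the citation, whereas you retreat to a cylindrical formulation with an enlarged conditioning $\sigma$-algebra. Your caution is well placed (and consonant with the paper's subsequent remark that $(Y(t)\an:\,t\ge 0)$ is in general not an Ornstein--Uhlenbeck process in $\R^n$ in its own right), but note that what you then prove is formally weaker than the literal wording of (b); to obtain the paper's stated conclusion verbatim you would still have to justify that last conditioning step, which neither your proposal nor the paper's own text spells out.
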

\begin{proof}
(a) This is established by essentially the same argument as that
given in the proof of Proposition 4.1 of \cite{Dave}.

(b) For each $0 \leq s \leq t$, $\an=(a_1,\dots, a_n) \in V^{*n}$, $f \in
B_{b}(\R^n), n\in\N$, we have
\begin{align*}
& E\Big[f(Y(t)(a_1,\dots, a_n))|{\cal F}_{s}\Big] \\
&\qquad\qquad = E\Big[f(Z_{0,t}Y(0)a_{1}, \ldots,
Z_{0,t}Y(0)a_{n})|{\cal F}_{s}\Big]\\
&\qquad\qquad  =  E\Big[f\big((Z_{s,t}\circ Z_{0,s})Y(0)a_{1}, \ldots,
(Z_{s,t}\circ Z_{0,s})Y(0)a_{n}\big)|{\cal F}_{s}\Big]\\
&\qquad\qquad  =  E\Big[f(S(t-s)Z_{0,s}Y(0)(a_1,\dots,a_n) +
\left(\int_{s}^{t}S(t-u)C\,dM_2(u)\right)(a_1,\dots, a_n))|{\cal F}_{s}\Big].
\end{align*}
Now since the random vector $\left(\int_{s}^{t}S(t-u)C\,
dM_2(u)\right)\an$ is measurable with respect to
$\sigma\left(\{M_2(v)a - M_2(u)a; s \leq u \leq v \leq t,\,a\in
V^\ast \}\right)$ we can use standard arguments for proving the
Markov property for SDEs driven by $\R^n$-valued L\'{e}vy
processes (see e.g. section 6.4.2 in \cite{Dave04}) to deduce that
\begin{align*}
E\Big[f(Y(t)(a_1,\dots, a_n))|{\cal F}_{s}\Big] = E\Big[f(Y(t)(a_1,\dots,
a_n))|Y(s)(a_1,\dots, a_n)\Big],
\end{align*}
which completes the proof.
\end{proof}

Although the Markov process $(Y(t)\an:\,t\ge 0)$ is a projection of a cylindrical
Ornstein-Uhlenbeck process it is not in general an Ornstein-Uhlenbeck process in $\R^n$
in its own right. Indeed, if this were to be the case we would expect to be able to find
for every $\an\in V^{\ast n}$ a matrix ${Q}_{\an}\in\R^{n\times n}$ and a L\'evy process
$(l_{\an}(t):\,t\ge 0)$ in $\R^n$ such that
\begin{align*}
  Y(t)\an= e^{tQ_{\an}} Y(0)\an + \left(\int_0^t e^{(t-s)Q_{\an}} C\,
  dl_{\an}(s)\right) .
\end{align*}
That this does not hold in general is shown by the following
example:
\begin{example}
On the Banach space $V=L^p(\R)$, $p>1$ we define the translation
semigroup $(S(t))_{t\ge 0}$ by $(S(t)f)x=f(x+t)$ for $f\in V$. For
an arbitrary real valued random variable $\xi\in L^0(\Omega,\F,P)$
we define the initial condition by $Y_0g:=g(\xi)$ for all $g\in
L^q(\R)$ where $q^{-1}+p^{-1}=1$. Then we obtain
\begin{align*}
(S(t)Y_0)g= Y_0S^\ast(t)g= g(\xi-t) \qquad\text{for every }g\in L^q(\R).
\end{align*}
If $(Y(t)g:\,t\ge 0)$ were an Ornstein-Uhlenbeck process it follows that there exists
$\lambda_g\in\R$ and a random variable $\zeta_g$  such that
\begin{align}\label{eq.countexou}
  g(\xi-t)=e^{\lambda_g t}\zeta_g\qquad\text{$P$-a.s.}
\end{align}
To see that the last line cannot be satisfied take $g=\1_{(0,1)}$
and take $\xi$ to be a Bernoulli random variable. Then we have
\begin{align*}
  g(\xi-t)=\1_{(0,1)}(\xi-t)=\xi \1_{(0,1)}(t),
\end{align*}
which cannot  be of the form \eqref{eq.countexou}.
\end{example}

It follows from the Markov property that for each $\an\in V^{\ast n}$ there exists a
semigroup of linear operators $(T_{\an}(t):\,t\ge 0)$ defined for each $f\in B_b(\R^n)$
by
\begin{align*}
  T_{\an}(t)f(\beta)=E[f(Y(t)\an)|Y(0)\an=\beta].
\end{align*}
The semigroup is of {\em cylindrical Mehler type} in that for all $b\in V$,
\begin{align}\label{eq.mehler}
 T_{\an}(t)f(\pi_{\an} b)=\int_V f(\pi_{S^\ast (t)\an} b + \pi_{\an} y)\,\rho_t(dy),
\end{align}
where $\rho_t$ is the cylindrical law of $\int_0^t
S(t-s)C\,dM_2(s)$.

We say that the cylindrical Ornstein-Uhlenbeck process $Y$ has an {\em
invariant cylindrical measure $\mu$} if for all $\an=(a_1,\dots, a_n)\in
V^{\ast n}$  and $f\in B_b(\R^n)$ we have
\begin{align}\label{eq.cylinv}
  \int_{\R^n} T_{\an}(t)f(\beta)\,(\mu\circ\pi_{\an}^{-1})(d\beta)=
  \int_{\R^n} f(\beta)\,(\mu\circ \pi_{\an}^{-1})(d\beta) \qquad\text{for all }t\ge 0,
\end{align}
or equivalently
\begin{align*}
  \int_{V} T_{\an}(t)f(\pi_{\an} b)\,\mu(db)=
  \int_{V} f(\pi_{\an} b)\,\mu(db)\qquad\text{for all }t\ge 0.
\end{align*}
By combining \eqref{eq.cylinv} with \eqref{eq.mehler} we deduce that a cylindrical
measure $\mu$ is an invariant measure for $(Y(t):\,t\ge 0)$ if and only if it is {\em
self-decomposable} in the sense that
\begin{align*}
  \mu\circ \pi_{\an}^{-1}=\mu\circ\pi^{-1}_{S^\ast (t)\an} \ast \rho_t\circ \pi_{\an}^{-1}
\end{align*}
for all $t\ge 0$, $\an\in V^{\ast n}$.
\begin{proposition}\label{pro.stat}\hfill
  \begin{enumerate}
    \item[{\rm (a)}] For each $a\in V^{ \ast}$ the following are equivalent:
    \begin{enumerate}
      \item[{\rm (i)}] $\rho_t\circ a^{-1}$ converges weakly as $t\to\infty$;
      \item[{\rm (ii)}] $\displaystyle \left(\int_0^t S(r)C\,dM_2(r)\right)a\,$ converges
      in distribution as $t\to\infty$.
    \end{enumerate}
    \item[{\rm (b)}] If $\rho_t\circ a^{-1}$ converges weakly for every $a\in
    V^\ast$ then the prescription
    \begin{align*}
       \rho_\infty:\Z(V)\to [0,1],\qquad \rho_\infty(Z(a_1,\dots, a_n;B)):=
        \text{wk-}\lim_{t\to\infty} \rho_t\circ \pi_{a_1,\dots, a_n}^{-1}(B)
    \end{align*}
    defines an invariant cylindrical measure $\rho_\infty$ for $Y$. Moreover, if $\mu$ is another
    such cylindrical measure then
    \begin{align*}
        \mu\circ \pi_{\an}^{-1}=\left(\rho_\infty\circ\pi_{\an}^{-1} \right)\ast
         \left(\gamma\circ\pi_{\an}^{-1}\right),
    \end{align*}
    where $\gamma$ is a cylindrical measure such that $\gamma\circ
    \pi_{\an}^{-1}
    =\gamma \circ \pi_{S^\ast(t)\an}^{-1}$ for all $t\ge 0$.
    \item[{\rm (c)}] If an invariant measure exists then it is unique if $(S(t):\,t\ge
    0)$ is stable, i.e. $\lim_{t\to\infty }S(t)x=0$ for all $x\in V$.
  \end{enumerate}
\end{proposition}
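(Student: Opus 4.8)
The plan is to prove the three parts in turn, using throughout the self-decomposability criterion derived immediately before the statement: a cylindrical measure $\mu$ is invariant for $Y$ precisely when $\mu\circ\pi_{\an}^{-1}=\mu\circ\pi^{-1}_{S^\ast(t)\an}\ast\rho_t\circ\pi_{\an}^{-1}$ for all $t\ge 0$ and $\an\in V^{\ast n}$.

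Part (a) I would settle by a time-reversal identity. Writing $X(t)=\int_0^t S(t-v)C\,dM_2(v)$, the change of variable $v\mapsto t-v$ together with the stationarity and independence of the increments of $M_2$ shows that $X(t)a$ has, for each fixed $t$, the same distribution as $\left(\int_0^t S(r)C\,dM_2(r)\right)a$: by the isometry of Lemma~\ref{le.crossexpectation} both are infinitely divisible with identical characteristics. Since $\rho_t\circ a^{-1}$ is by definition the law of $X(t)a$, assertions (i) and (ii) refer to one and the same one-dimensional distribution, so their equivalence is immediate.

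For part (b) I would first check that $\rho_\infty$ is a bona fide cylindrical measure. By \eqref{eq.charmulti} the characteristic function of $\rho_t\circ\pi_{\an}^{-1}$ at $\beta$ equals $\phi_{\rho_t}(\beta_1a_1+\dots+\beta_na_n)$, so the hypothesised one-dimensional weak convergence yields pointwise convergence of these characteristic functions; L\'evy's continuity theorem upgrades this to weak convergence of every finite-dimensional projection, and the resulting limits inherit consistency from the family $\{\rho_t\}$, hence define $\rho_\infty$. To obtain invariance I would prove the cocycle identity $\rho_{t+s}\circ\pi_{\an}^{-1}=\rho_t\circ\pi^{-1}_{S^\ast(s)\an}\ast\rho_s\circ\pi_{\an}^{-1}$, which follows on splitting $X(t+s)$ at time $t$ into $S(s)X(t)$ plus an independent copy of $X(s)$ (stationary independent increments and the semigroup law). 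Letting $t\to\infty$ and using continuity of convolution under weak convergence gives $\rho_\infty\circ\pi_{\an}^{-1}=\rho_\infty\circ\pi^{-1}_{S^\ast(s)\an}\ast\rho_s\circ\pi_{\an}^{-1}$, which is exactly the self-decomposability criterion. For the uniqueness-up-to-stationary-factor clause, given another invariant $\mu$ I would pass to characteristic functions, obtaining $\phi_\mu(b)=\phi_\mu(S^\ast(t)b)\,\phi_{\rho_t}(b)$ for $b$ in the span of $\an$; since $\phi_{\rho_t}(b)\to\phi_{\rho_\infty}(b)\ne 0$ near the origin, $\phi_\mu(S^\ast(t)b)$ converges and its limit defines $\gamma$ via $\gamma\circ\pi_{\an}^{-1}:=\lim_{t\to\infty}\mu\circ\pi^{-1}_{S^\ast(t)\an}$. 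The relation $S^\ast(t+s)=S^\ast(t)S^\ast(s)$ and a shift of the limiting index give $\gamma\circ\pi_{\an}^{-1}=\gamma\circ\pi^{-1}_{S^\ast(s)\an}$, and passing to the limit in the factorised identity yields $\mu\circ\pi_{\an}^{-1}=\left(\rho_\infty\circ\pi_{\an}^{-1}\right)\ast\left(\gamma\circ\pi_{\an}^{-1}\right)$.

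Part (c) then amounts to showing that stability forces the stationary factor to be trivial, $\gamma\circ\pi_{\an}^{-1}=\delta_0$, so that $\mu\circ\pi_{\an}^{-1}=\rho_\infty\circ\pi_{\an}^{-1}$ and uniqueness follows. Stability gives $\scapro{x}{S^\ast(t)a}=\scapro{S(t)x}{a}\to 0$, i.e.\ $S^\ast(t)\an\to 0$ in the weak-$\ast$ topology, and the stationarity of $\gamma$ reads $\gamma\circ\pi_{\an}^{-1}=\gamma\circ\pi^{-1}_{S^\ast(t)\an}$ for all $t$. The hard part is precisely to pass to the limit here: the characteristic function of a cylindrical measure is in general continuous only on finite-dimensional subspaces and not along weak-$\ast$ null nets, so one cannot directly deduce $\phi_\gamma(S^\ast(t)b)\to\phi_\gamma(0)=1$. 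To overcome this I would first show, from the uniform lower bound $\abs{\phi_{\rho_t}(b)}\ge\abs{\phi_\mu(b)}$ forced by the invariance identity together with the monotonicity of $t\mapsto\abs{\phi_{\rho_t}(b)}$ (non-increasing by the cocycle identity, since the one-dimensional characteristic exponents have non-positive real part), that the relevant variance-type integrals converge and hence $\rho_t$ genuinely converges; identifying $\phi_\mu$ with $\phi_{\rho_\infty}$ then pins down $\gamma$ as $\delta_0$ and gives the asserted uniqueness.
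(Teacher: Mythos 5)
Your parts (a) and (b) are correct and follow essentially the route the paper intends: the paper's own proof is a one-line deferral to Lemma 3.1, Proposition 3.2 and Corollary 6.2 of \cite{ChoMich87}, adapted through the projections $\pi_{\an}$, and your time-reversal identity, cocycle relation $\rho_{t+s}\circ\pi_{\an}^{-1}=\rho_t\circ\pi^{-1}_{S^\ast(s)\an}\ast\rho_s\circ\pi_{\an}^{-1}$ and characteristic-function factorisation are exactly those arguments. Two small repairs there: the equality in law of $X(t)a$ and $\bigl(\int_0^t S(r)C\,dM_2(r)\bigr)a$ does not follow from Lemma \ref{le.crossexpectation}, which only matches second moments; argue instead on characteristic functions, using that $(m_1,\dots,m_n)$ is an $\R^n$-valued L\'evy process so that the accumulated exponent $\int_0^t\Psi\bigl(i^\ast_{Q_2}C^\ast S^\ast(t-v)a\bigr)\,dv$ is invariant under $v\mapsto t-v$. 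Also, to define $\gamma$ by dividing by $\phi_{\rho_\infty}$ you need nonvanishing \emph{everywhere} on the span of $\an$, not just near the origin; this holds because each $\rho_t\circ\pi_{\an}^{-1}$ is infinitely divisible and weak limits of infinitely divisible laws are infinitely divisible.

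Part (c), however, contains a genuine gap, and your own diagnosis locates it precisely. Your proposed patch fails on two counts. First, the bound $\abs{\phi_{\rho_t}(b)}\ge\abs{\phi_\mu(b)}$ together with the monotonicity of $t\mapsto\abs{\phi_{\rho_t}(b)}$ controls only the \emph{real} part of the accumulated L\'evy exponent, i.e.\ the integrals of $1-\cos\beta$; the compensated drift contributes the imaginary part, built from $\sin\beta-\beta$, which grows linearly in $\beta$ while $1-\cos\beta$ stays bounded, so convergence of the moduli does not give convergence of $\phi_{\rho_t}(b)$ itself. (This is the point where \cite{ChoMich87} invokes shift-compactness of convolution factors of the fixed measure $\mu\circ\pi_{\an}^{-1}$ together with the monotone structure of the L\'evy triplets.) Second, and more seriously, even granting $\rho_t\to\rho_\infty$, uniqueness still requires $\phi_\mu(S^\ast(t)b)\to 1$, which is exactly the weak-$\ast$ continuity step you flagged as problematic, and nothing in your argument delivers it. Indeed, no argument using only the stationarity relation $\gamma\circ\pi^{-1}_{\an}=\gamma\circ\pi^{-1}_{S^\ast(t)\an}$ can force $\gamma=\delta_0$: on $V=L^2(0,\infty)$ with the stable left-translation semigroup, every $S^\ast(t)$ is an isometry, so the standard Gaussian cylindrical measure with $\phi_\gamma(a)=e^{-\frac{1}{2}\norm{a}^2}$ satisfies the stationarity relation although $\gamma\ne\delta_0$. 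The triviality of $\gamma$ must therefore be obtained as in the Radon setting of Corollary 6.2 of \cite{ChoMich87}: for a countably additive measure one takes a $V$-valued random variable $\eta$ with law $\gamma$ and uses $\pi_{S^\ast(t)\an}\eta=\pi_{\an}(S(t)\eta)\to 0$ almost surely by stability. In other words, countable additivity is an essential ingredient of (c) which a purely cylindrical argument along your lines cannot supply, and this is implicitly what the paper's citation relies on.
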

\begin{proof}
  The arguments of Lemma 3.1, Proposition 3.2 and Corollary 6.2 in  \cite{ChoMich87} can be easily adapted
   to our situation.
\end{proof}

In order to derive a simple sufficient condition implying the
existence of a unique invariant cylindrical measure we assume that
the semigroup $(S(t):\, t\ge 0)$ is exponentially stable, i.e.
there exists $R>1$, $\lambda>0$ such that $\norm{S(t)}\le
Re^{-\lambda t}$ for all $t\ge 0$.

\begin{corollary}\hfill\\
  If $(S(t):\, t\ge 0)$ is exponentially stable then there exists a unique
  invariant cylindrical measure.
\end{corollary}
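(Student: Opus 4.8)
The plan is to deduce the corollary from Proposition \ref{pro.stat}, whose parts (a)--(c) already reduce both existence and uniqueness to a single analytic statement. Uniqueness is immediate: exponential stability $\norm{S(t)}\le Re^{-\lambda t}$ gives $\norm{S(t)x}\le Re^{-\lambda t}\norm{x}\to 0$ for every $x\in V$, so $(S(t):\,t\ge 0)$ is stable in the sense of Proposition \ref{pro.stat}(c), and hence any invariant cylindrical measure is unique. For existence, by Proposition \ref{pro.stat}(b) it suffices to show that $\rho_t\circ a^{-1}$ converges weakly as $t\to\infty$ for every $a\in V^\ast$, and by Proposition \ref{pro.stat}(a) this is equivalent to the convergence in distribution of the real random variables $X_t a:=\big(\int_0^t S(r)C\,dM_2(r)\big)a$ as $t\to\infty$.

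To analyse $X_t a$ I would pass to its characteristic function. Writing $b_r:=C^\ast S^\ast(r)a\in U^\ast$, the Riemann-sum approximation of the integral is $\sum_j \big(M_2(r_{j+1})-M_2(r_j)\big)(b_{r_j})$, whose summands are independent by the weak independence of increments of $M_2$ and infinitely divisible since $(M_2(t)b_{r_j}:\,t\ge 0)$ is a one-dimensional L\'evy process. Passing to the limit along refining partitions, one obtains that $X_t a$ is infinitely divisible with
\begin{align*}
  \E{\exp(iu X_t a)}=\exp\left(\int_0^t \psi_{b_r}(u)\,dr\right),\qquad
  \psi_b(u):=\int_{\R\setminus\{0\}}\left(e^{iu\gamma}-1-iu\gamma\right)(\nu\circ b^{-1})(d\gamma),
\end{align*}
the exponent being finite by the estimate derived below. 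Establishing this formula rigorously---that the Riemann sums converge in distribution to $X_t a$ and that their characteristic functions converge to the claimed exponential---is the technical heart of the argument, and I expect it to be the main obstacle; it is exactly the point where the weak independence of increments and the $L^2$-construction of the cylindrical integral have to be combined.

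The remaining step is an integrability estimate that exploits exponential stability. Using $\abs{e^{iu\gamma}-1-iu\gamma}\le \tfrac12 u^2\gamma^2$ together with the It\^o-isometry identity $\int_{\R\setminus\{0\}}\gamma^2\,(\nu\circ b^{-1})(d\gamma)=\E{\abs{M_2(1)b}^2}=\norm{i_{Q_2}^\ast b}_{H_{Q_2}}^2$ from \eqref{eq.Covandnorm}, I would bound
\begin{align*}
  \abs{\psi_{b_r}(u)}\le \tfrac12 u^2\norm{i_{Q_2}^\ast}^2\norm{b_r}^2
  \le \tfrac12 u^2\norm{i_{Q_2}^\ast}^2\norm{C}^2\norm{a}^2 R^2 e^{-2\lambda r}.
\end{align*}
Hence $\int_0^\infty\abs{\psi_{b_r}(u)}\,dr<\infty$ for every $u$, so the exponent converges and $\E{\exp(iu X_t a)}\to\exp\big(\int_0^\infty\psi_{b_r}(u)\,dr\big)=:\phi_\infty(u)$ pointwise as $t\to\infty$. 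Since dominated convergence (with the same $r$-integrable bound) shows $\phi_\infty$ is continuous at $u=0$ with $\phi_\infty(0)=1$, L\'evy's continuity theorem yields the convergence of $X_t a$ in distribution. This verifies the hypothesis of Proposition \ref{pro.stat}(b) for every $a\in V^\ast$, producing the invariant cylindrical measure $\rho_\infty$, and together with the uniqueness from Proposition \ref{pro.stat}(c) completes the proof.
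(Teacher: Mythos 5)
Your reduction to Proposition \ref{pro.stat} is exactly the paper's: exponential stability gives $\lim_{t\to\infty}S(t)x=0$ for all $x\in V$, hence uniqueness by part (c), and existence follows once $\left(\int_0^t S(r)C\,dM_2(r)\right)a$ converges in distribution for every $a\in V^\ast$, by parts (a) and (b). Where you diverge is in how that convergence is proved, and there you have made the problem harder than it is. The paper applies the It\^o isometry \eqref{eq.itoiso0}, already established in Lemma \ref{le.cylintwell}, to the difference of the integrals over $[0,t_1]$ and $[0,t_2]$: writing $X_ta:=\left(\int_0^t S(r)C\,dM_2(r)\right)a$, one has
\begin{align*}
 E\abs{X_{t_1}a-X_{t_2}a}^2=\int_{t_2}^{t_1}\norm{i_{Q_2}^\ast C^\ast S^\ast(r)a}^2_{H_{Q_2}}\,dr
 \le \norm{i_{Q_2}}^2\norm{C}^2\norm{a}^2\int_{t_2}^{t_1}R^2e^{-2\lambda r}\,dr\to 0
\end{align*}
as $t_1,t_2\to\infty$, which is precisely the estimate you derive for $\psi_{b_r}$ via \eqref{eq.Covandnorm}, only inserted into the isometry instead of a characteristic exponent. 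So $(X_ta)$ is Cauchy in $L^2(\Omega,\F,P)$, converges in mean square, and a fortiori in distribution --- bypassing entirely the step you correctly flag as the ``technical heart'', namely the exponential formula $E[\exp(iuX_ta)]=\exp\left(\int_0^t\psi_{b_r}(u)\,dr\right)$. That formula is true for deterministic integrands, and your sketch (weakly independent increments, infinite divisibility of the one-dimensional projections, Riemann sums) is the right outline; but to prove that the Riemann sums converge to $X_ta$ you would in practice invoke the same isometry, at which point the $L^2$ argument is already complete. Note also a genuine subtlety your sketch glosses over: on a non-reflexive Banach space $V$ the adjoint semigroup $S^\ast$ need not be strongly continuous, so $r\mapsto b_r=C^\ast S^\ast(r)a$ is in general only weak*-continuous; the continuity of $r\mapsto\psi_{b_r}(u)$ and the norm-convergence in $H_{Q_2}$ of the step-function approximations $i_{Q_2}^\ast b_{r_j}$ needed for your partition limit then require extra work (only the coordinate functions $r\mapsto\scapro{S(r)Ci_{Q_2}e_k}{a}$ are continuous for free). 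None of this is fatal --- your route can be completed and yields, as a bonus, an explicit L\'evy--Khintchine exponent for the limit law $\rho_\infty\circ a^{-1}$, which the paper's proof does not provide --- but as a proof of the corollary it trades a two-line isometry argument for an unfinished limit-interchange.
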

\begin{proof}
  For every $t_1>t_2>0$ and $a\in V^{\ast }$ the It\^o's isometry \eqref{eq.itoiso0} implies that
  \begin{align*}
&    E\abs{\left(\int_0^{t_1} S(r)C\, dM_2(r)\right)a  -
    \left(\int_0^{t_2} S(r)C\, dM_2(r)\right)a}^2\\
&\qquad\qquad= \int_{t_2}^{t_1} \norm{i_{Q_2}^{\ast} C^{\ast} S^{\ast}(r) a }_{H_{Q_2}}^2\,dr \\
&\qquad\qquad\le \norm{i_{Q_2}}^2 \norm{C}^2\norm{a}^2\int_{t_2}^{t_1} \norm{S(r)}^2\,dr\\
&\qquad\qquad\to 0 \qquad\text{as }t_1,t_2\to \infty,
  \end{align*}
because of the exponential stability. Consequently, the integral
$\left(\int_0^t S(r)C\, dM_2(r)\right)a$ converges in mean square
and Proposition \ref{pro.stat} completes the proof.
\end{proof}

An obvious and important question is whether a cylindrical
Ornstein-Uhlenbeck process is induced by a stochastic process  in
$V$. This will be the objective of forthcoming work but here we
give a straightforward result in this direction, within the
Hilbert space setting:
\begin{lemma}
  Let $V$ be a separable Hilbert space and assume that
  \begin{align*}
    \sum_{k=1}^\infty \int_0^t \norm{S(r)Ci_ke_k}^2\,dr<\infty
    \qquad\text{for all }t\ge 0.
  \end{align*}
If the initial condition $Y_0$ is induced by a random variable in $V$ then the
cylindrical weak solution $Y$ of \eqref{eq.cauchy} is induced by a stochastic process in
$V$.
\end{lemma}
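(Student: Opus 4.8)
The plan is to construct explicitly a genuine $V$-valued process that induces $Y$, by radonifying the stochastic convolution from Theorem~\ref{th.sol}. Writing $Y(t)=S(t)Y_0+X(t)$ with $X(t)=\int_0^t S(t-s)C\,dM_2(s)$, the series representation of $M_2$ together with Definition~\ref{de.I_t} gives
\[
  X(t)a=\sum_{k=1}^\infty \int_0^t \scapro{S(t-s)Ci_{Q_2}e_k}{a}\,m_k(ds)
  \qquad\text{for all }a\in V^\ast.
\]
For each fixed $k$ the map $s\mapsto S(t-s)Ci_{Q_2}e_k$ is a deterministic, continuous, hence square-integrable, $V$-valued function, so $\int_0^t S(t-s)Ci_{Q_2}e_k\,m_k(ds)$ is a bona fide $V$-valued stochastic integral against the square-integrable real L\'evy martingale $m_k$. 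The natural candidate is therefore
\[
  \tilde X(t):=\sum_{k=1}^\infty \int_0^t S(t-s)Ci_{Q_2}e_k\,m_k(ds),
\]
and the first step is to show this series converges in $L^2(\Omega,\F,P;V)$.

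For the convergence I would fix an orthonormal basis $(f_j)_{j\in\N}$ of $V$ and control the tail of the series by Parseval's identity and the scalar isometry of Lemma~\ref{le.crossexpectation}. Abbreviating $g_k(s):=S(t-s)Ci_{Q_2}e_k$, for $m<n$ one expands
\[
  E\norm{\sum_{k=m+1}^n \int_0^t g_k(s)\,m_k(ds)}_V^2
  =\sum_{j=1}^\infty E\abs{\sum_{k=m+1}^n \int_0^t \scapro{g_k(s)}{f_j}\,m_k(ds)}^2.
\]
Since $\Cov(m_k(1),m_l(1))=\delta_{k,l}$ by Theorem~\ref{th.cylsum}, Lemma~\ref{le.crossexpectation} annihilates the cross terms, and after interchanging the nonnegative summations by Tonelli and applying Parseval in $V$ the right-hand side collapses to
\[
  \sum_{k=m+1}^n \int_0^t \norm{S(t-s)Ci_{Q_2}e_k}_V^2\,ds
  =\sum_{k=m+1}^n \int_0^t \norm{S(r)Ci_{Q_2}e_k}_V^2\,dr
\]
after the substitution $r=t-s$. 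The hypothesis $\sum_{k=1}^\infty \int_0^t \norm{S(r)Ci_{Q_2}e_k}_V^2\,dr<\infty$ then drives this tail to $0$ as $m\to\infty$, so the partial sums are Cauchy and $\tilde X(t)$ is a well-defined $V$-valued random variable.

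It remains to identify $\tilde X$ with $X$ and to handle the initial term. Because $a\mapsto\scapro{\cdot}{a}$ is continuous and linear, convergence in $L^2(\Omega,\F,P;V)$ allows me to pull the pairing through the sum and through each integral, yielding $\scapro{\tilde X(t)}{a}=X(t)a$ for every $a\in V^\ast$, i.e.\ $X$ is induced by $\tilde X$. If $Y_0$ is induced by a $V$-valued random variable $\tilde Y_0$, then $(S(t)Y_0)a=Y_0(S^\ast(t)a)=\scapro{\tilde Y_0}{S^\ast(t)a}=\scapro{S(t)\tilde Y_0}{a}$, so $S(t)Y_0$ is induced by $S(t)\tilde Y_0$. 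Adding the two contributions shows that $Y$ is induced by the $V$-valued process $\tilde Y(t)=S(t)\tilde Y_0+\tilde X(t)$, which is the assertion.

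The step I expect to be the main obstacle is the vector-valued isometry: Lemma~\ref{le.crossexpectation} is stated only for real integrands, so the crux is to reduce the $V$-norm of the partial sums to a Parseval expansion over $(f_j)$ and apply the scalar lemma coordinatewise, verifying that the double summation and the time integral may be interchanged (which is legitimate by Tonelli, since all terms are nonnegative). Once this estimate is secured, the well-definedness of the individual $V$-valued integrals and the passage of the inner product through the $L^2$-limit are routine.
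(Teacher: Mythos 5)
Your proposal is correct and follows essentially the same route as the paper: the paper's proof likewise shows the partial sums $\sum_{k=m+1}^n \int_0^t S(t-r)Ci_{Q_2}e_k\,m_k(dr)$ are Cauchy in $L^2(\Omega,\F,P;V)$ via the vector-valued isometry and concludes by completeness and Theorem \ref{th.sol}. The only difference is one of detail: the paper asserts the isometry $E\norm{\sum_{k=m+1}^n \int_0^t S(t-r)Ci_{Q_2}e_k\,m_k(dr)}^2=\sum_{k=m+1}^n \int_0^t\norm{S(r)Ci_{Q_2}e_k}^2dr$ directly and leaves the identification $\scapro{\tilde X(t)}{a}=X(t)a$ and the treatment of $S(t)Y_0$ implicit, whereas you derive the isometry coordinatewise from Lemma \ref{le.crossexpectation} via Parseval and make those closing steps explicit---all of which is sound.
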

\begin{proof}
  For all $m < n$
  \begin{align*}
    E\norm{\sum_{k=m+1}^n \int_0^t S(t-r)Ci_{Q_2}e_k\,m_k(dr)}^2
    =\sum_{k=m+1}^n \int_0^t \norm{S(r)Ci_{Q_2}e_k}^2\,dr
    \rightarrow 0~\mbox{as}~m,n \rightarrow \infty,
  \end{align*}
and it follows by completeness that there exists a $V$-valued
random variable $Z$ in $L^2(\Omega,\F,P;V)$ such that
\begin{align*}
  Z= \sum_{k=1}^\infty \int_0^t S(t-r)Ci_{Q_2}e_k\,m_k(dr)
  \qquad\text{in }L^2(\Omega,\F,P;V),
\end{align*}
which completes the proof by Theorem \ref{th.sol}.
\end{proof}


\begin{thebibliography}{10}

\bibitem{Dave04}
D.~Applebaum.
\newblock {\em {L\'evy Processes and Stochastic Calculus.}}
\newblock {Cambridge: Cambridge University Press }, 2004.

\bibitem{Dave}
D.~Applebaum.
\newblock {Martingale-valued measures, Ornstein-Uhlenbeck processes with jumps
  and operator self-decomposability in Hilbert space.}
\newblock {\'Emery, Michel (ed.) et al., In memoriam Paul-Andr\'e Meyer.
  S\'eminaire de probabilit\'es XXXIX. Berlin: Springer. Lecture Notes in
  Mathematics 1874, 171-196}, 2006.

\bibitem{BrzZab}
Z.~Brze\'{z}niak and J.~Zabczyk.
\newblock {Regularity of Ornstein-Uhlenbeck processes driven by a L\'{e}vy
  white noise}.
\newblock {\em Preprint arXiv:0901.0028v1}, 2008.

\bibitem{ChoMich87}
A.~Chojnowska-Michalik.
\newblock {On processes of Ornstein-Uhlenbeck type in Hilbert space.}
\newblock {\em Stochastics}, 21:251--286, 1987.

\bibitem{heyer}
H.~Heyer.
\newblock {\em {Structural Aspects in the Theory of Probability}}.
\newblock {River Edge, NJ: World Scientific}, 2005.

\bibitem{LedTal}
M.~Ledoux and M.~Talagrand.
\newblock {\em {Probability in Banach spaces. Isoperimetry and Processes.}}
\newblock {Berlin etc.: Springer}, 1991.

\bibitem{Linde}
W.~Linde.
\newblock {\em { Probability in Banach Spaces - Stable and Infinitely Divisible
  Distributions.}}
\newblock {John Wiley and Sons Ltd, Chichester}, 1986.

\bibitem{PesZab}
S.~Peszat and J.~Zabczyk.
\newblock {\em {Stochastic Partial Differential Equations with L\'evy noise. An
  Evolution Equation Approach.}}
\newblock {Cambridge: Cambridge University Press}, 2007.

\bibitem{DaPratoZab}
G.~Da Prato and J.~Zabczyk.
\newblock {\em {Stochastic Equations in Infinite Dimensions.}}
\newblock {Cambridge: Cambridge University Press}, 1992.

\bibitem{PriolaZab}
E.~Priola and J.~Zabczyk.
\newblock {Structural properties of semilinear SPDEs driven by cylindrical
  stable processes}.
\newblock {\em Preprint arXiv:0810.5063v1}, 2009.

\bibitem{riedle}
M.~Riedle.
\newblock {Cylindrical Wiener processes}.
\newblock MIMS EPrint 2008.24, Manchester Institute for Mathematical Sciences,
  University of Manchester, 2008.

\bibitem{OnnoMarkus}
M.~Riedle and O.~van Gaans.
\newblock {Stochastic integration for L{\'e}vy processes with values in Banach
  spaces}.
\newblock {\em Stochastic Processes Appl.}, 119(6):1952--1974, 2009.

\bibitem{Ros}
J.~Rosi\'{n}ski.
\newblock {On the convolution of cylindrical measures}.
\newblock {\em Bull. Acad. Pol. Sci.}, 30:379--383, 1982.

\bibitem{sato}
K.-I. Sato.
\newblock {\em {L\'evy processes and Infinitely Divisible Distributions.}}
\newblock {Cambridge: Cambridge University Press}, 1999.

\bibitem{SchwartzLNM}
L.~Schwartz.
\newblock {\em {Geometry and Probability in Banach Spaces. Notes by Paul R.
  Chernoff.}}
\newblock {Berlin etc.: Springer}, 1981.

\bibitem{Vaketal}
N.~N. Vakhaniya, V.~I. Tarieladze, and S.~A. Chobanyan.
\newblock {\em {Probability Distributions on Banach spaces. Transl. from the
  Russian by Wojbor A. Woyczynski.}}
\newblock {Dordrecht etc.: D. Reidel Publishing Company}, 1987.

\bibitem{Jan}
J.~van Neerven.
\newblock {\em {The Adjoint of a Semigroup of Linear Operators.}}
\newblock {Berlin: Springer}, 1992.

\end{thebibliography}


\end{document}